\tikzstyle{V}=[draw, fill =black, circle, inner sep=0pt, minimum size=1.5pt]
\tikzstyle{C}=[draw, fill =white, circle, inner sep=0pt, minimum size=1.5pt]
\tikzstyle{over}=[draw=white,double=black,line width=2pt, double distance=.5pt]
\numberwithin{equation}{section}
\theoremstyle{definition}
\newtheorem{theorem}{Theorem}[section]
\newtheorem{thm}[theorem]{Theorem}
\newtheorem{lemma}[theorem]{Lemma} 
\newtheorem{prop}[theorem]{Proposition} 
\newtheorem{proposition}[theorem]{Proposition}
\def\<{\langle}
\def\>{\rangle}
\newcommand{\cB}{\mathcal{B}}
\newcommand{\CC}{\mathbb{C}}
\newcommand{\cL}{\mathcal{L}}
\newcommand{\cN}{\mathcal{N}}
\newcommand{\CP}{\mathbb{P}_\mathbb{C}}
\newcommand{\cS}{\mathcal{S}}
\newcommand{\tforall}{~\textup{ for all }~}
\newcommand{\fsl}{\mathfrak{sl}} 
\newcommand{\fT}{\mathfrak{T}}
\newcommand{\GL}{\mathrm{GL}}
\newcommand{\inv}{^{-1}}
\newcommand{\ld}{\lambda}
\newcommand{\otw}{\textup{otherwise}}
\newcommand{\rk}{\mathop{\textup{rank}}}
\renewcommand{\SS}{\mathbb{S}}
\newcommand{\tA}{\widetilde{A}}
\newcommand{\tB}{\widetilde{B}}
\newcommand{\tcN}{\widetilde{\cN}}
\newcommand{\tcS}{\widetilde{\cS}}
\newcommand{\td}{\widetilde{d}}
\newcommand{\tD}{\widetilde{D}}
\newcommand{\tDe}{\widetilde{\Delta}}
\newcommand{\tGa}{\widetilde{\Gamma}}
\newcommand{\tif}{\textup{if }}
\newcommand{\tphi}{\widetilde{\varphi}}
\newcommand{\TT}{\mathbb{T}}
\newcommand{\tv}{\widetilde{v}}
\newcommand{\tV}{\widetilde{V}}
\newcommand{\ZZ}{\mathbb{Z}}
\renewcommand{\Im}{\mathop{\text{Im}}}
\newcommand{\cupa}{\begin{tikzpicture}[baseline={(0,-.3)}, scale = 0.8]
\draw(-.25,0) -- (1.25,0) -- (1.25,-.9) -- (-.25,-.9) -- cycle;
\begin{footnotesize}
\node at (0,.2) {$1$};
\node at (.5,.2) {$2$};
\node at (1,.2) {$3$};
\end{footnotesize}
\draw[thick] (0,0) .. controls +(0,-.5) and +(0,-.5) .. +(.5,0);

\draw[thick] (1,0) -- +(0,-.9);
\end{tikzpicture}
}
\newcommand{\cupb}{\begin{tikzpicture}[baseline={(0,-.3)}, scale = 0.8]
\draw(-.25,0) -- (1.25,0) -- (1.25,-.9) -- (-.25,-.9) -- cycle;
\begin{footnotesize}
\node at (0,.2) {$1$};
\node at (.5,.2) {$2$};
\node at (1,.2) {$3$};
\end{footnotesize}
\draw[thick] (0.5,0) .. controls +(0,-.5) and +(0,-.5) .. +(.5,0);

\draw[thick] (0,0) -- +(0,-.9);
\end{tikzpicture}
}
\title{A study of irreducible components of Springer fibers using quiver varieties}
\author[M.S. Im, C.-J. Lai, and A. Wilbert]{Mee Seong Im,  Chun-Ju Lai, and Arik Wilbert}
\address{Department of Mathematical Sciences, United States Military Academy, West Point, NY 10996, USA}
\curraddr{Department of Mathematics, United States Naval Academy, Annapolis, MD 21402, USA}
    \email{meeseongim@gmail.com (Im)}
\address{Institute of Mathematics, Academia Sinica, Taipei 10617 Taiwan}
    \email{cjlai@gate.sinica.edu.tw (Lai)}
\address{Department of Mathematics, University of Georgia, Athens, GA 30602, USA}
\email{arik.wilbert@uga.edu (Wilbert)}
\begin{document}

\begin{abstract}
It is a remarkable theorem by Maffei--Nakajima that the Slodowy variety, which is a subvariety of the resolution of the nilpotent cone, can be realized as a Nakajima quiver variety of type A.
However, the isomorphism is rather implicit as it takes to solve a system of equations in which the variables are linear maps.
In this paper, we construct solutions to this system under certain assumptions.
This establishes an explicit and efficient way to compute the image of a complete flag contained in the Slodowy variety under the Maffei--Nakajima isomorphism and describe these flags in terms of quiver representations. 
As Slodowy varieties contain Springer fibers naturally,
we can use these results to provide an explicit description of the irreducible components of two-row Springer fibers in terms of a family of kernel relations via quiver representations.
\end{abstract}
 
\maketitle 

\section{Introduction}

\subsection{An overview} \label{sec:background}
Quiver varieties were used by 
Nakajima in~\cite{Na94, Nak98} to provide a geometric construction of the universal enveloping algebra for symmetrizable Kac-Moody Lie algebras altogether with their integrable highest weight modules.  
It was shown by Nakajima that the cotangent bundle of partial flag varieties can be realized as a quiver variety.
He also conjectured that the Slodowy varieties, i.e., resolutions of slices to the adjoint orbits in the nilpotent cone, can be realized as quiver varieties.
In type A, this conjecture was proved by Maffei,~\cite[Theorem~8]{Maf05}, 
thereby establishing a precise connection between quiver varieties and Slodowy varieties. 
Precisely speaking, a Slodowy variety $\tcS_\ld$ parametrized by a partition $\ld$ is isomorphic to a quiver variety $M(d,v)$ of type A under a restriction (see \eqref{eq:dv}) on the dimention vectors $d$ and $v$. The relation between the partition and the dimension vectors can be found in \eqref{def:ldv}. 

However, this Maffei--Nakajima isomorphism is in general implicit in the sense that it requires to solve a system of equations in which variables are linear maps.
Our main result is to provide an explicit solution to the system (cf. Lemma~\ref{lem:hell} when $\ld$ is a two-row partition, and Lemma~\ref{lem:hell2} for an arbitrary partition) under assumption \eqref{eq:T}, and hence obtain an explicit flag realization of type A quiver varieties that arises from a partition (cf. Theorem~\ref{thm:Fi} for the two-row case, and Theorem~\ref{thm:Fi2} for an arbitrary partition).

Aside from quiver varieties, an important geometric object in our article is the Springer fiber, which plays a crucial role in the geometric construction of representations of Weyl groups (cf. \cite{Spr76, Spr78}). 
In general, Springer fibers are singular and decompose into many irreducible components.
Characterization of irreducible components has been a challenging problem. 
Little is known when the corresponding partition of Springer fiber has more than two rows.
As Springer fibers are naturally contained in the Slodowy varieties, it makes sense to describe the image of a Springer fiber in the corresponding quiver variety, and it is well-know that this image is Lusztig's Lagrangian subvariety (see \cite{Lu91, Lu98}). 
Moreover, irreducible components of these Lagrangian subvarieties actually form a crystal base structure (see \cite{Sai02, KS19}). Our results however make preceding constructions explicit. 

When the (type A) Springer fibers correspond to nilpotent endomorphisms having exactly two blocks (i.e., the two-row case),
a characterization of its irreducible components in terms of flags and so-called cup diagrams (cf. Section~\ref{sec:cupdiag}) was obtained by Stroppel--Webster, \cite{SW12}, based on an earlier work by Fung, \cite{Fun03}. 
Our work allows for a translation (see Theorem~\ref{thm:main1}) of the results of Stroppel--Webster to the quiver variety, i.e.,  the new relations on the quiver representation side that correspond to the cup/ray relations in \cite[Proposition~7]{SW12} as below:
\eq
\begin{split}
\textup{Cup relation for } \overset{{}_i\hspace{.9mm}{}_j}{\cup} 
&
\quad
\Leftrightarrow 
\quad
F_j = 
x^{-\frac{1}{2}(j-i+1)} F_{i-1}
\\
&
\quad
\Leftrightarrow 
\quad
\ker B_{i-1} B_{i} \cdots B_{\frac{j+i-3}{2}}=\ker A_{j-1} A_{j-2} \cdots A_{\frac{j+i-1}{2}},
\end{split}
\endeq
\eq
\begin{split}
\textup{Ray relation for } \overset{{}_i}{|} 
&
\quad
\Leftrightarrow 
\quad
F_i = x^{-\frac{1}{2}(i-\rho(i))}(x^{n-k-\rho(i)} F_n ) 
\\
&
\quad
\Leftrightarrow 
\quad
\begin{cases}
\:\: B_iA_i = 0
&\tif c(i) \geq 1, 
\\
B_{i} B_{i+1} \cdots B_{n-k-1}\Gamma_{n-k} = 0
&\tif c(i) = 0,
\end{cases}
\end{split}
\endeq
where $\rho(i) \in \ZZ_{>0}$ counts the number of rays (including itself) to the left of $i$ , and $c(i) = \frac{i-\rho(i)}{2}$ is the total number of cups to the left of $i$. 

An application of the above characterization of type A is to obtain a characterization of the irreducible components of two-row Springer fibers of classical type by applying a recent work by Henderson--Licata and Li (cf. \cite{HL14, Li19}), in which two-row Springer fibers of classical type are realized by fixed-point subvarities of type A quiver varieties under certain automorphisms that arise from folding framed quivers.
Although the characterization is discovered by a tedious computation using quiver representations (see an unpublished manuscript \cite[\S5--7]{ILW19}), we realized that a more concise proof using flags (without quiver representations) is possible. 
Therefore, we will publish the result in an accompanying paper (\cite{ILW20b}).

It would be interesting to investigate if the relations on the quiver variety for the irreducible components generalize to nilpotent endomorphisms having more than two Jordan blocks. For these endomorphisms, characterizing the irreducible components remains an open problem.     
 
\subsection{Organization}
In Section~\ref{sec:Naka} we recall the construction of Nakajima quiver varieties.
Following Maffei, we will focus on the realization viewing geometric points as orbits of quiver representations rather than the original construction via geometric invariant theory (GIT).

In Section~\ref{sec:iso} we describe the Maffei--Nakajima isomorphism and the aforementioned system of equations. Details are provided mainly for the two-row case for readability.

Section~\ref{sec:exp} is devoted to constructing an explicit solution to the system, and thus establishing an efficient and explicit way to obtain the corresponding flag in the (not necessarily two-row) Slodowy variety from a quiver representation.

Finally, in Section~\ref{sec:appl} we describe irreducible components of Lusztig's Lagragian subvarities in type A quiver varieties in the two-row case. We also remark the connection to the irreducible components of two-row Springer fibers of classical type.

\subsection*{Acknowledgment}
M.S.I. would like to thank the University of Georgia for organizing the Southeast Lie Theory Workshop X, where our collaborative research group initially came together. 
The authors also thank the AGANT group at the University of Georgia for supporting this project. 
This project was initiated as a part of the Summer Collaborators Program 2019 at the School of Mathematics at the Institute for Advanced Study (IAS). The authors thank the IAS for providing an excellent working environment.
We are grateful for useful discussions with Jieru Zhu during the early stages of this project. 
We thank Anthony Henderson, Yiqiang Li, and Hiraku Nakajima for helpful clarifications, Vasily Krylov for pointing out a typo, and Catharina Stroppel for helpful comments on earlier drafts of our manuscript. 
A.W. was  partially supported by the ARC Discover Grant DP160104912 ``Subtle Symmetries and the Refined Monster''.
C.-J. L. is partially supported by the MoST grant 109-2115-M-001-011-MY3, 2020 -- 2023.

\section{Nakajima quiver varieties of type A}\label{sec:Naka}
We begin with the traditional approach to Nakajima quiver varieties via geometric invariant theory (GIT) quotients. 
In order to make computation easier, we then adopt an equivalent realization in which each geometric point is an orbit of a certain quiver representation space, following \cite{Maf05}. 

\subsection{Quiver representation space $R(d,v)$}
Fix a Dynkin quiver of type $A_{n-1}$. 
Its {\em framed quiver} $Q'$ is obtained by adding a shadow vertex that connects to each vertex.
We consider its {\em framed double quiver} $Q$ obtained from $Q'$ by replacing each arrow with two sided ones.
\[
\begin{tabular}{ccc}
$\xymatrix@-.5pc{
{~}\phantom{\stackrel{1'}{\bullet}}
\\ 
\stackrel{1}{\circ} \ar[r] 
& \stackrel{2}{\circ}\ar[r] 
& \cdots \ar[r] 
& \stackrel{n-1}{\circ}   
\\ 
}
$ 
&
\quad
$\xymatrix@-.5pc{
\stackrel{1'}{\bullet}
& \stackrel{2'}{\bullet}
& \ldots 
&\stackrel{n-1'}{\bullet}
\\ 
\stackrel{1}{\circ} \ar[r] \ar[u] 
& \stackrel{2}{\circ}\ar[r] \ar[u]
& \cdots \ar[r] 
& \stackrel{n-1}{\circ} \ar[u]  \\ 
}
$ 
&
\quad
$\xymatrix@-.5pc{
\stackrel{1'}{\bullet} \ar@/_.75pc/[d]
& \stackrel{2'}{\bullet} \ar@/_.75pc/[d]
& \ldots 
&\stackrel{n-1'}{\bullet} \ar@/_.75pc/[d]
\\ 
\stackrel{1}{\circ}  \ar@/^.65pc/[r]_{} \ar@/_.75pc/[u] 
& \stackrel{2}{\circ}\ar@/^.65pc/[l] \ar@/^.65pc/[r]_{} \ar@/_.75pc/[u]
& \cdots \ar@/^.65pc/[l] \ar@/^.65pc/[r]_{} 
& \stackrel{n-1}{\circ} \ar@/^.65pc/[l] \ar@/_.75pc/[u]  \\ 
}$ 
\\[1.5cm]
Quiver of type $A_{n-1}$
&
Framed quiver $Q'$
&
Framed double quiver $Q$
\end{tabular}
\]
\defn
A {\em quiver representation} of $Q$ is obtained by replacing vertices $i$ (and $i'$, resp.) with vector spaces $V_i$ (and $D_i$, resp.) as well as replacing arrows with the following linear maps, for $1\leq i \leq n-1, 1\leq j \leq n-2$:
\eq\label{eq:ABGD}
A_j:V_j\to V_{j+1},
\quad
B_j:V_{j+1}\to V_j, 
\quad
\Gamma_i:D_i\to V_i, 
\quad
\Delta_i:V_i\to D_i.
\endeq
Denote by $d = (\dim D_i)_i$ and $v = (\dim V_i)_i$ the corresponding dimension vectors,
and let $R(d,v)$ be the space of quiver representations of $Q$ with dimension vectors $d$ and $v$. 
Now we identify $R(d,v)$ as the space of quadruples $(A,B,\Gamma, \Delta)$ of collections of linear maps of the form in \eqref{eq:ABGD}.
\enddefn
\rmk\label{rmk:0}
In the rest of article, any space with an ineligible subscript is understood as a zero space (e.g., $V_0 = \{0\}, V_{n} =\{0\}$). Any linear map with an ineligible subscript is understood as a zero map (e.g., $A_{n-1}:V_{n-1}\to \{0\}$, $B_0:V_1 \to \{0\}$).
\endrmk

We will focus on those dimension vectors such that the corresponding quiver variety (yet to define) affords a flag realization. 
Nakajima provided such a flag realization for the special case (see Proposition~\ref{prop:MS} for details)
\eq\label{eq:Nakdv}
d = (n,0,\ldots,0),
\quad
v= (n-1, n-2, \ldots, 1),
\endeq 
which is then generalized by Maffei (cf. \cite[\S 1.4]{Maf05}) for $(d,v)$ satisfying the conditions below, for $1\leq i\leq n-1$:
\eq\label{eq:dv}
v_i = (n-i) - (n-i-1) d_{n-1} -  (n-i-2) d_{n-2} - \ldots - d_{i+1}.
\endeq
From now on, the assumption \eqref{eq:dv} applies to the remaining part of this article.
Note that \eqref{eq:Nakdv} is recovered by setting $d_1= n, d_2 = \ldots = d_{n-1} = 0$ in \eqref{eq:dv}.

Given a dimension vector $d = (d_1, \ldots, d_{n-1})$, we assign a partition $\ld = \ld(d)$  such that this partition is of type  $((n-1)^{d_{n-1}}, \ldots, 1^{d_1})$. That is, for $i < n$, $d_i$ counts the number of appearance of $i$ in $\ld$, or
\eq\label{def:ldv}
\ld = \ld(d) =  (\ld_1, \ld_2, \ldots, \ld_r),
\quad
d_i = \#\{j \in \{1, \ldots, r\} ~|~ \ld_j = i \}.  
\endeq
Note that the vector $d$ in \eqref{eq:Nakdv} produces the partition $(1,1,\ldots,1)$ of type $(1^n)$. 
Write $R_{\ld} = R(d,v)$ for $d, v$ satisfying \eqref{eq:dv}.

We begin with the case where the partition $\ld$ has exactly two rows, i.e.,  either $d_k = d_{n-k} = 1$ for some $k < n-k$, or $d_k = 2$ if $2k = n$; while all the other $d_i$'s are zeroes. 
The dimension vector $v$ is then determined by $d$ via \eqref{eq:dv}, and hence we have
\eq\label{eq:2rowdv}
d_i = \delta_{i,k} + \delta_{i, n-k},
\quad
v_i = \begin{cases}
\quad i &\tif i \leq k, \\
\quad k &\tif k\leq i \leq n-k, \\
n-i &\tif i \geq n-k.
\end{cases}
\endeq
From now on, we fix elements $e$ and $f$ so that
\eq\label{def:ef}
D_{k} = 
\begin{cases}
\<f\> &\tif n-k > k;
\\
\<e,f\> &\tif n-k = k,
\end{cases}
\quad 
D_{n-k} = 
\begin{cases}
\<e\> &\tif n-k > k;
\\
D_k &\tif n-k = k.
\end{cases}
\endeq
A general quiver representation in $R_\ld$ for $n-k < k$ (and for $n-k=k$, resp.) is of the form below in Figure~\ref{figure:Sn-kk} (and Figure~\ref{figure:Skk}, resp.).
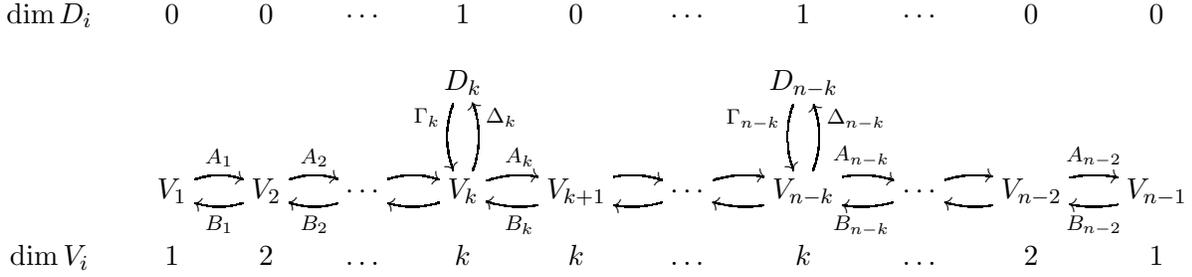
\begin{figure}[ht!]
\caption{Quiver representations in $R_{(n-k,k)}$ and dimension vectors, where $k<n-k$.}
 \label{figure:Sn-kk}
\[ 
\xymatrix@C=18pt@R=9pt{
\dim D_i&0&0&\cdots &1&0&\cdots &1&\cdots &0&0
\\
&
& &  &   
\ar@/_/[dd]_<(0.2){\Gamma_k}
D_k
& &  &  
\ar@/_/[dd]_<(0.2){\Gamma_{n-k}}
D_{n-k}
 & & &&
\\ & & & & & & &  && 
\\ 
&
{V_1}  
	\ar@/^/[r]^{A_1} 
	& 
	\ar@/^/[l]^{B_1}
{V_2}
	\ar@/^/[r]^{A_2}
	&
	\ar@/^/[l]^{B_2}
	\cdots   
	\ar@/^/[r]
	& 
	\ar@/^/[l]
{V_k}  
	\ar@/_/[uu]_>(0.8){\Delta_k} 
	\ar@/^/[r]^{A_{k}}
	&   
{V_{k+1}}  
	\ar@/^/[l]^{B_k}
	\ar@/^/[r]
	&
	\ar@/^/[l]
	\cdots
	\ar@/^/[r]
	&  
{V_{n-k}} 
	\ar@/_/[uu]_>(0.8){\Delta_{n-k}}
	\ar@/^/[r]^{A_{n-k}}
	\ar@/^/[l]
	& 
	\ar@/^/[l]^{B_{n-k}}
	\cdots
	\ar@/^/[r]
	&
	\ar@/^/[l]
{V_{n-2}}  
	\ar@/^/[r]^{A_{n-2}}
	& 
{V_{n-1}} 
\ar@/^/[l]^{B_{n-2}}  
\\
\dim V_i & 1&2&\ldots&k&k&\ldots&k&\ldots&2&1
}
\] 
\end{figure}
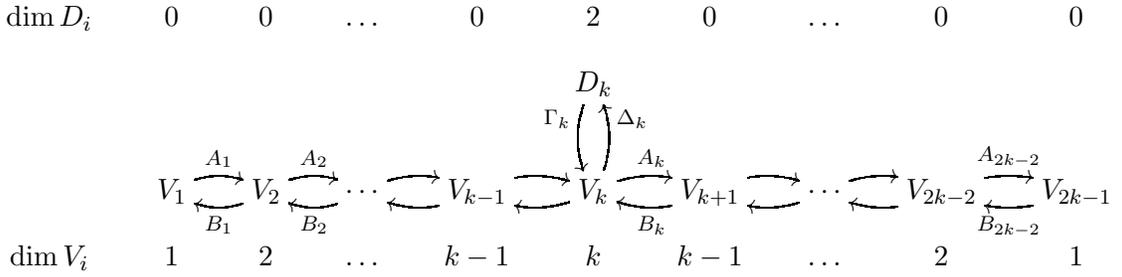
\begin{figure}[ht!]
\caption{Quiver representations in $R_{(k,k)}$ and the dimension vectors.}
 \label{figure:Skk}
\[ 
\xymatrix@C=18pt@R=9pt{
\dim D_i& 0&0&\ldots&0&2&0&\ldots&0&0
\\
&
& &  &   &
\ar@/_/[dd]_<(0.2){\Gamma_k}
D_k
& &  &  &
\\ & & & & & & &  && 
\\ 
&
{V_1}  
	\ar@/^/[r]^{A_1} 
	& 
	\ar@/^/[l]^{B_1}
{V_2}
	\ar@/^/[r]^{A_2}
	&
	\ar@/^/[l]^{B_2}
	\cdots
	\ar@/^/[r]
	&   
	\ar@/^/[l]
{V_{k-1}} 
	\ar@/^/[r]
	& 
	\ar@/^/[l]
{V_k}  
	\ar@/_/[uu]_>(0.8){\Delta_k} 
	\ar@/^/[r]^{A_{k}}
	&   
	\ar@/^/[l]^{B_k}
{V_{k+1}}  
	\ar@/^/[r]
	& 
	\ar@/^/[l]
	\cdots
	\ar@/^/[r]
	&
	\ar@/^/[l]
{V_{2k-2}}  
	\ar@/^/[r]^{A_{2k-2}}
	& 
{V_{2k-1}} 
\ar@/^/[l]^{B_{2k-2}}  
\\
\dim V_i & 1&2&\ldots&k-1&k&k-1&\ldots&2&1
}
\] 
\end{figure}
We will focus on the two-row case in the first half of this paper, and deal with the general case where $\ld$ can be arbitrary in Section~\ref{sec:rrows}.
\subsection{Stable locus $\Lambda^+(d,v)$}

Following Nakajima, an element $(A,B,\Gamma,\Delta) \in R(d,v)$ is called {\em admissible} if the Atiyah-Drinfeld-Hitchin-Manin (ADHM) equations are satisfied. Equivalently, for all $1\leq i\leq n-1$,
\eq\label{eq:L1}
B_i A_i =  A_{i-1} B_{i-1} + \Gamma_i \Delta_i.
\endeq
An admissible element is called {\em stable} if, for each collection $U = (U_i \subseteq V_i)_i$ of subspaces satisfying
\eq
\Gamma_i(D_i) \subseteq U_i, 
\quad
A_i(U_i) \subseteq U_{i+1}, 
\quad
B_i(U_{i+1}) \subseteq U_i 
\quad\quad 
\tforall i,
\endeq
it follows that $U_i = V_i$ for all $i$.
We will use the following equivalent notion of stability due to Maffei:
\begin{lemma}[{\cite[Lemmas~14, 2]{Maf05}}]
An admissible element $(A,B,\Gamma,\Delta) \in R(d,v)$ is stable if and only if, for all $1\leq i\leq n-1$,
\eq\label{eq:L2}
\Im \: A_{i-1} + \sum_{j\geq i} \Im \: \Gamma_{j\to i} = V_i,
\endeq
where it is understood that $A_0 =0$, and that $\Gamma_{j\to i}$, for all $i, j$, is the natural composition from $D_j$ to $V_i$, i.e.,
\eq\label{def:Gaij}
\Gamma_{j\to i} = 
\begin{cases} 
B_i  \ldots B_{j-1} \Gamma_j &\tif j \geq i;
\\
A_{i-1} \ldots A_j \Gamma_j &\tif j \leq i.
\end{cases}
\endeq
\end{lemma}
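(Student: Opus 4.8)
The plan is to prove the two characterizations of stability are equivalent by a direct argument comparing the two descriptions of the "subspace generated by the images of the framing maps propagated along $A$ and $B$". The statement is that an admissible element is stable (every $A,B,\Gamma$-stable collection of subspaces $U$ with $\Gamma_i(D_i)\subseteq U_i$ equals $V$) if and only if \eqref{eq:L2} holds at every vertex $i$. Since this is quoted as \cite[Lemma~14, 2)]{Maf05}, the aim is only to reconstruct Maffei's argument.

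First I would make the ADHM relation \eqref{eq:L1} do its work. Define $W_i \subseteq V_i$ to be the left-hand side of \eqref{eq:L2}, i.e. $W_i = \Im A_{i-1} + \sum_{j\ge i}\Im\Gamma_{j\to i}$. The first step is to check that $W = (W_i)_i$ is itself a stable-type collection, that is, $\Gamma_i(D_i)\subseteq W_i$ (clear, taking $j=i$), $A_i(W_i)\subseteq W_{i+1}$, and $B_i(W_{i+1})\subseteq W_i$. The inclusion $A_i(W_i)\subseteq W_{i+1}$ is immediate since $A_i\Im A_{i-1}\subseteq \Im A_i$ and $A_i\Gamma_{j\to i} = \Gamma_{j\to i+1}$ when $j\ge i+1$, while for $j = i$ one uses $A_i\Gamma_i = \Gamma_{i\to i+1}$. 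The inclusion $B_i(W_{i+1})\subseteq W_i$ is the one place the ADHM equation enters: $B_i\Im A_i = \Im B_iA_i = \Im(A_{i-1}B_{i-1}+\Gamma_i\Delta_i)\subseteq \Im A_{i-1}+\Im\Gamma_i\subseteq W_i$, and $B_i\Gamma_{j\to i+1} = \Gamma_{j\to i}$ for $j\ge i+1$. Hence $W$ is an admissible subrepresentation. Consequently, if $(A,B,\Gamma,\Delta)$ is stable, then $W_i = V_i$ for all $i$, which is exactly \eqref{eq:L2}; this gives the forward direction.

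For the converse, suppose \eqref{eq:L2} holds at every vertex and let $U = (U_i)_i$ be any collection satisfying $\Gamma_i(D_i)\subseteq U_i$, $A_i(U_i)\subseteq U_{i+1}$, $B_i(U_{i+1})\subseteq U_i$. The goal is to show $W_i\subseteq U_i$ for all $i$, which combined with \eqref{eq:L2} gives $U_i = V_i$. This follows once we check that $\Im\Gamma_{j\to i}\subseteq U_i$ and $\Im A_{i-1}\subseteq U_i$; but both reduce to the stability conditions on $U$ applied iteratively: $\Gamma_j(D_j)\subseteq U_j$ and then pushing through the $A$'s or $B$'s keeps us inside $U$ by \eqref{def:Gaij}, and $\Im A_{i-1}\subseteq U_i$ needs $\Im A_{i-1} = A_{i-1}(V_{i-1})$ — here one must be slightly careful, since a priori we only know $A_{i-1}(U_{i-1})\subseteq U_i$, not $A_{i-1}(V_{i-1})\subseteq U_i$. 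The resolution is to run an induction on $i$: by \eqref{eq:L2} at vertex $i-1$ we have $V_{i-1} = \Im A_{i-2}+\sum_{j\ge i-1}\Im\Gamma_{j\to i-1}$, and by the inductive hypothesis (or directly, as in the $\Gamma$ case) each summand already lies in $U_{i-1}$, so in fact $V_{i-1}\subseteq U_{i-1}$, i.e. $U_{i-1} = V_{i-1}$, and then trivially $\Im A_{i-1}\subseteq U_i$. So the induction simultaneously proves $U_i = V_i$ for all $i$.

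The main obstacle, and the only subtle point, is the bookkeeping in the converse direction: one cannot simply say "$\Im A_{i-1}\subseteq U_i$" without first knowing $U_{i-1} = V_{i-1}$, so the argument must be organized as a single induction on the vertex $i$ that establishes $U_i = V_i$ step by step rather than trying to bound the $W_i$ independently. Once the induction is set up correctly — base case $U_1 = V_1$ coming from \eqref{eq:L2} at $i=1$, where $A_0 = 0$ so $V_1 = \sum_{j\ge 1}\Im\Gamma_{j\to 1}\subseteq U_1$ — the remaining verifications are the routine diagram-chase identities $A_i\Gamma_{j\to i} = \Gamma_{j\to i+1}$ and $B_i\Gamma_{j\to i+1} = \Gamma_{j\to i}$ together with the one application of \eqref{eq:L1}. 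I would also remark that the equivalence is purely formal given admissibility, so no use of the specific dimension vectors in \eqref{eq:dv} or \eqref{eq:2rowdv} is needed.
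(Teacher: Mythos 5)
Your overall architecture is right: the direction ``\eqref{eq:L2} $\Rightarrow$ stable'' is purely formal (your induction on the vertex $i$, with base case $A_0=0$, is exactly what is needed and is correct, and indeed uses no admissibility), and the direction ``stable $\Rightarrow$ \eqref{eq:L2}'' should go by showing that $W_i = \Im A_{i-1}+\sum_{j\ge i}\Im\Gamma_{j\to i}$ is itself an $A$- and $B$-stable collection containing the $\Gamma_i(D_i)$. The paper itself does not reprove this (it is quoted from Maffei), so the proposal has to stand on its own, and there it has one genuine error.

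The problem is your verification of $A_i(W_i)\subseteq W_{i+1}$. The identity you invoke, $A_i\Gamma_{j\to i}=\Gamma_{j\to i+1}$ for $j\ge i+1$, is false: unwinding \eqref{def:Gaij}, the left side is $A_iB_i\,(B_{i+1}\cdots B_{j-1}\Gamma_j)$ while the right side is $B_{i+1}\cdots B_{j-1}\Gamma_j$, so the claimed equality would force $A_iB_i$ to act as the identity on $\Im\Gamma_{j\to i+1}$. (The true identity of this shape is $B_i\Gamma_{j\to i+1}=\Gamma_{j\to i}$, which is what you correctly use for $B$-stability; you appear to have transported it to the wrong side.) Consequently your assertion that the ADHM equation enters only in the $B$-stability check is also wrong: $A$-stability of $W$ is the harder half and genuinely needs \eqref{eq:L1}. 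A correct argument is an induction on $j-i$ showing $\Im(A_i\Gamma_{j\to i})\subseteq W_{i+1}$: the base case $j=i$ gives $\Im A_i\Gamma_i\subseteq\Im A_i\subseteq W_{i+1}$, and for $j>i$ one writes $A_i\Gamma_{j\to i}=(A_iB_i)\Gamma_{j\to i+1}=B_{i+1}\bigl(A_{i+1}\Gamma_{j\to i+1}\bigr)-\Gamma_{i+1}\bigl(\Delta_{i+1}\Gamma_{j\to i+1}\bigr)$ using \eqref{eq:L1} at vertex $i+1$; the second summand lands in $\Im\Gamma_{i+1}\subseteq W_{i+1}$, and the first lands in $B_{i+1}(W_{i+2})\subseteq W_{i+1}$ by the inductive hypothesis combined with the already-established $B$-stability of $W$. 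With that repair the forward direction closes and the rest of your proposal is fine.
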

\defn
The {\em stable locus} $\Lambda^+(d,v)$ is the subspace of $R(d,v)$ consisting of elements that are admissible  and stable, i.e., 
\eq\label{def:L+}
\Lambda^+(d,v) = \{(A,B,\Gamma,\Delta)  \in R(d,v) \mid \eqref{eq:L1}, \eqref{eq:L2} \textup{ hold}\},
\endeq
We denote by $\Lambda_{\ld}$ as the set of admissible representations in $R_{\ld}$, and $\Lambda^+_{\ld}$ as its stable locus.
\enddefn
\subsection{Nakajima quiver variety $M(d,v)$}
Let  $R'(d,v)$ be the quiver representation space of the framed quiver $Q'$ instead of the framed double quiver $Q$  for the same dimension vectors $d$ and $v$.
Following \cite{Na94}, the representation space $R(d,v)$ is naturally a symplectic variety by identifying it with the cotangent bundle of $R'(d,v)$.
Let $V = \prod_i V_i$, $D = \prod_i D_i$.
The reductive group  $\GL(V) = \prod_i \GL(V_i)$ acts on $R'(d,v)$ by symplectic base change automorphisms, and hence induces a moment map
\eq
\mu_{\textup{mom}}: T^*(R'(d,v)) \to \bigoplus_{i=1}^{n-1} \mathfrak{gl}(V_i).
\endeq
For the character
$\chi: \GL(V) \to \CC^*,
g = (g_i)_i \mapsto \prod_i \det g_i$,
define a graded algebra $\mathcal{A}^\chi = \bigoplus_i \mathcal{A}^{\chi}_i$ whose $i$th graded component is given by
\eq
\mathcal{A}^{\chi}_i = \{\textup{regular } f \in \CC[\mu_{\textup{mom}}\inv(0)] ~|~ f(g.x) = \chi^i(g) f(x)
\textup{~for all~}g\in \GL(V), x \in \mu_{\textup{mom}}\inv(0)\}.
\endeq
\defn
The Nakajima quiver variety is the GIT quotient
\eq
M(d,v) = \mu_{\textup{mom}}\inv(0)//_\chi \GL(V) = \textup{Proj}(\mathcal{A}^\chi).
\endeq
Denote also by $M_{\ld}$ the Nakajima quiver variety for $R_{\ld}$.
\enddefn
\rmk
The geometric points of the scheme $M(d,v)$ are in bijection with $\GL(V)$-orbits of $\Lambda^+(d,v)$.
For our purpose, it is more convenient to use the identification 
\eq\label{def:M}
M(d,v) = \Lambda^+(d,v)/\GL(V)
\endeq 
as an orbit space up to the following $\GL(V)$-action on $R(d,v)$, for $g=(g_i)_i \in \GL(V)$:
\eq\label{def:GL action}
\begin{aligned}
&g. (A,B,\Gamma, \Delta) = 
((g_{j+1}A_j g_j\inv)_j, 
\quad(g_j B_j g_{j+1}\inv)_j, 
\quad(g_i\Gamma_i)_i, 
\quad(\Delta_ig_i\inv)_i).
\end{aligned}
\endeq
\endrmk
For $(A,B,\Gamma, \Delta)\in R(d,v)$, we abbreviate its $\GL(V)$-orbit by $[A,B,\Gamma,\Delta]$.
We denote the projection onto $M(d,v)$ by
\eq\label{def:p}
p_{d,v}: \Lambda^+(d,v) \to M(d,v),
\quad
(A,B,\Gamma,\Delta) \mapsto [A,B,\Gamma,\Delta].
\endeq
Denote also by $p_{\ld}$ for the projection onto $M_{\ld}$.

\section{Maffei-Nakajima isomorphism}\label{sec:iso}

\subsection{Springer fibers and Slodowy varieties} 
Fix integers $n, k$ such that $n \geq 1$ and $n-k \geq k \geq 1$.
Let $\mathcal{N}$ be the variety of nilpotent elements in $\mathfrak{gl}_n(\CC)$. Let $G=\GL_n(\CC)$. 
One may parametrize the $G$-orbits in $\mathcal{N}$ using partitions of $n$ by associating a nilpotent endomorphism to the list of the dimensions of the Jordan blocks. 

Denote the complete flag variety in $\CC^n$ by
\eq
\cB = \{ F_\bullet = (0 = F_0 \subset F_1 \subset \ldots \subset F_n = \CC^n)\mid\dim F_i = i \tforall 1\leq i\leq n\}.
\endeq
Let  
$\mu: \tcN \to \cN, (u, F_\bullet) \mapsto u$ 
be the Springer resolution,
where
\eq
\tcN = T^*\cB \cong \{(u,F_\bullet)\in \cN \times \cB \mid u (F_i) \subseteq F_{i-1} \tforall i \}.
\endeq
\defn
The Springer fiber of $x\in \cN$ is denoted by $\cB_x = \mu\inv(x)$. We adapt the identification
\eq
\cB_x \equiv \{ F_\bullet \in \cB ~|~ x (F_i) \subseteq F_{i-1} \tforall i  \}.
\endeq
\enddefn
For each $x\in \mathcal{N}$, the
Slodowy transversal slice 
of (the $G$-orbit of) 
$x$ is the variety 
\eq
\mathcal{S}_x =\{ u\in \mathcal{N}\mid [u-x,y]=0\}, 
\endeq
where $(x,y,h)$ is an $\fsl_2$-triple in $\cN$ (here $(0,0,0)$ is also considered as an $\mathfrak{sl}_2$-triple).  

\defn
The Slodowy variety associated to $x \in \cN$ is given by
\eq
\tcS_{x}=\mu^{-1}(\mathcal{S}_{x}) = \{ (u, F_\bullet) \in \cN\times \cB \mid [u-x,y]=0,  \:\:   u (F_i) \subseteq F_{i-1} \textup{ for all }i \}.
\endeq  
In particular, $x \in \cS_x$ and hence $\cB_x = \mu\inv(x) \subset \mu\inv(\cS_x) = \tcS_x$.
\enddefn

For now, we fix $x\in \cN$ to be of Jordan type $\ld =(n-k,k)$, i.e., $\ld$ corresponds to a two-row Young diagram. 
We write  
\eq
\cB_{\ld}:= \cB_{x},
\quad
\cS_{\ld} := \cS_{x},
\quad
\tcS_{\ld} := \tcS_{x},
\endeq
and we also call $\cB_\ld$ a {\em two-row Springer fiber}.

\subsection{Nakajima's isomorphism} 
It is first proved in \cite[Thm.~7.2]{Na94} that there is an explicit isomorphism $M(d,v) \to \tcS_x$ for certain $d,v,x$  using a different stability condition.
Here we recall a variant due to Maffei that suits our need. 
\begin{prop}[{\cite[Lemma~15]{Maf05}}]\label{prop:MS}
If $(d,v)$ satisfies \eqref{eq:Nakdv}, then the assignment below defines an isomorphism $\tphi(d,v): M(d, v) \to \tcS_x, [A, B, \Gamma, \Delta] \mapsto (x, F_\bullet)$, 
where
\eq
 x = \Delta_1 \Gamma_1,
 \quad
F_\bullet =  (0 \subset \ker\Gamma_1 
\subset \ker \Gamma_{1\to2} 
\subset
\ldots \subset \ker \Gamma_{1\to n}).
\endeq
\end{prop}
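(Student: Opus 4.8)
The plan is to establish the claimed bijection $\tphi(d,v)\colon M(d,v)\to \tcS_x$ in three stages: (1) show that the assignment $[A,B,\Gamma,\Delta]\mapsto(x,F_\bullet)$ with $x=\Delta_1\Gamma_1$ and $F_i=\ker\Gamma_{1\to i+1}$ is well-defined, i.e.\ independent of the choice of representative in the $\GL(V)$-orbit and actually landing in $\tcS_x$; (2) exhibit an inverse map; (3) verify both composites are the identity. Throughout I would use the explicit form of the dimension vectors in \eqref{eq:Nakdv}, namely $d=(n,0,\ldots,0)$ and $v=(n-1,\ldots,1)$, so that $D_1\cong\CC^n$, $\dim V_i=n-i$, and all $D_i=0$ for $i\geq 2$; in particular the ADHM relation \eqref{eq:L1} collapses to $B_iA_i=A_{i-1}B_{i-1}$ for $i\geq 2$ and $B_1A_1=\Gamma_1\Delta_1$ for $i=1$ (with $A_0=0$), and the stability condition \eqref{eq:L2} becomes $\Im A_{i-1}+\Im\Gamma_{1\to i}=V_i$.

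For well-definedness, I would first note $\GL(V)$-invariance: under the action \eqref{def:GL action}, $\Delta_1\Gamma_1$ is unchanged since $g$ acts trivially on $D_1$, and $\ker\Gamma_{1\to i+1}\subseteq D_1$ is also unchanged because $g_{i+1}\cdots$ are invertible and act on the $V$-side only, so $\Gamma_{1\to i+1}=B_{i+1}\cdots$ wait---here $\Gamma_{1\to i}=A_{i-1}\cdots A_1\Gamma_1$ since the relevant index $j=1\le i$, and precomposing-by-nothing/postcomposing-by-invertibles leaves the kernel fixed. Next, stability \eqref{eq:L2} forces each $\Gamma_{1\to i}\colon D_1\to V_i$ to be surjective (since $A_0=0$ kills the first summand when $i=1$, giving $\Im\Gamma_1=V_1$; an easy induction using that $A_{i-1}$ restricted to $\Im\Gamma_{1\to i-1}$ feeds into the recursion then shows surjectivity of $\Gamma_{1\to i}$ for all $i$), hence $\dim\ker\Gamma_{1\to i}=n-(n-i)=i$, so the $F_i:=\ker\Gamma_{1\to i+1}$ have the correct dimensions; and $F_i\subseteq F_{i+1}$ because $\Gamma_{1\to i+1}=B_{i+1}\cdots$ no---because $\Gamma_{1\to i+2}=A_{i+1}\Gamma_{1\to i+1}$, so $\ker\Gamma_{1\to i+1}\subseteq\ker\Gamma_{1\to i+2}$. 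Finally $x=\Delta_1\Gamma_1$ is nilpotent and $x(F_i)\subseteq F_{i-1}$: this follows from $\Gamma_1 x=\Gamma_1\Delta_1\Gamma_1=B_1A_1\Gamma_1=B_1\Gamma_{1\to2}$ and an inductive identity relating $\Gamma_{1\to i}\circ x$ to $\Gamma_{1\to i+1}$ up to factors of $B$'s, which one extracts by iterating \eqref{eq:L1}. One also checks $x\in\cS_x$ trivially and that the Jordan type of $x$ is as prescribed, using that $M(d,v)\neq\emptyset$ and the dimension count.

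For the inverse, given $(x,F_\bullet)\in\tcS_x$ I would reconstruct $(A,B,\Gamma,\Delta)$ canonically: set $V_i=\CC^n/F_i$ (or dually $V_i=(\CC^n)^*$-type quotients so that $\Gamma_{1\to i}$ is the natural projection $\CC^n\twoheadrightarrow\CC^n/F_i$), let $\Gamma_1\colon D_1=\CC^n\to V_1$ and more generally $\Gamma_{1\to i}$ be the quotient maps, let $A_i\colon V_i\to V_{i+1}$ be induced by the identity $\CC^n\to\CC^n$ (well-defined since $F_i\subseteq F_{i+1}$), let $\Delta_1\colon V_1\to\CC^n$ be induced by $x$ (well-defined since $x(\CC^n)\subseteq\ker$-of-nothing but $\Delta_1\Gamma_1=x$ forces $\Delta_1$ on $V_1=\CC^n/F_1$ to be induced by $x$, which descends because $x(F_1)\subseteq F_0=0$), and finally define $B_i\colon V_{i+1}\to V_i$ as the map induced by $x\colon\CC^n\to\CC^n$ (well-defined since $x(F_{i+1})\subseteq F_i$). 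Then one verifies \eqref{eq:L1} and \eqref{eq:L2} hold, and that this representation is stable; this is where choosing the identification $V_i=\CC^n/F_i$ pays off because all the structure maps become tautological. The main obstacle, and the part I would spend the most care on, is precisely matching the two different normalizations of the $B$-maps and checking the ADHM equation \eqref{eq:L1} in the non-obvious index $i=1$ where the framing term $\Gamma_1\Delta_1$ enters: one must confirm $B_1A_1=\Gamma_1\Delta_1$ translates exactly to ``$x=x$'' under the identifications, and that no spurious constants or sign/transpose issues appear. Everything else---$\GL(V)$-equivariance of the inverse construction, and that the two composites are the identity on orbits---is then a bookkeeping check using that any stable representation is isomorphic to the tautological one built from its associated flag, which is a standard rigidity argument for quiver varieties of this shape.
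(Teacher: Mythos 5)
The paper offers no internal proof of this proposition --- it is quoted directly from \cite[Lemma~15]{Maf05} --- so there is nothing of the paper's own to compare against; what you have written is essentially the standard argument, and it is sound at the level of geometric points. All the key steps are present and correct: $\GL(V)$-invariance of $\Delta_1\Gamma_1$ and of $\ker\Gamma_{1\to i}$ (the group acts only on the $V$-side and postcomposition by invertibles preserves kernels); surjectivity of every $\Gamma_{1\to i}$ from stability --- note that for $i\geq 2$ the sum $\sum_{j\geq i}\Im\Gamma_{j\to i}$ in \eqref{eq:L2} vanishes outright because $D_j=0$ for $j\geq 2$, so stability literally says each $A_{i-1}$ is onto, which is slightly cleaner than the recursion you describe --- giving $\dim\ker\Gamma_{1\to i}=i$; the identity $\Gamma_{1\to i-1}\circ(\Delta_1\Gamma_1)=B_{i-1}\Gamma_{1\to i}$ obtained by iterating \eqref{eq:L1}, which yields $u(F_i)\subseteq F_{i-1}$; and the tautological inverse on $V_i=\CC^n/F_i$ together with the rigidity argument that the isomorphisms $V_i\cong\CC^n/F_i$ induced by the surjections $\Gamma_{1\to i}$ intertwine all four structure maps (the case of $B_i$ being exactly the displayed identity once more).

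Three points to clean up. First, you repeatedly write $F_i=\ker\Gamma_{1\to i+1}$; the indexing consistent with the statement and with your own dimension count $\dim\ker\Gamma_{1\to i}=i$ is $F_i=\ker\Gamma_{1\to i}$. Second, there is no Jordan-type condition to verify: for $d=(n,0,\dots,0)$ the associated partition is $\bar{1}=(1,\dots,1)$, the base point of the slice is $x=0$, and $\tcS_{\bar{1}}=\mu^{-1}(\cN)=\tcN$ is the whole cotangent bundle, so membership of $(\Delta_1\Gamma_1,F_\bullet)$ in the target requires only $u(F_i)\subseteq F_{i-1}$; your proposed justification via ``$M(d,v)\neq\emptyset$ and a dimension count'' would not have constituted an argument, but fortunately nothing needs it. Third, your construction establishes a bijection on points rather than an isomorphism of algebraic varieties; upgrading to the scheme/GIT level is precisely where one must lean on \cite{Na94} or \cite{Maf05}, and the paper's identification $M(d,v)=\Lambda^+(d,v)/\GL(V)$ quietly absorbs that step.
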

In other words, Proposition~\ref{prop:MS} provides a flag realization of $M(d,v)$ when the quiver representations are of the form in Figure~\ref{figure:tS}.
Equivalently, it works for the partition $\ld(d) = \bar{1} := (1,1,\ldots, 1)$.
For other $(d,v)$ that satisfy \eqref{eq:dv},
a flag realization of $M(d,v)$ is also available, with the help of an auxiliary quiver representation space.
For our need, we will only focus on the special case $M(d,v) = M_\ld$ for some partition $\ld =(n-k,k)$ (see \eqref{def:ldv}). 
\subsection{Modified quiver representation space $\widetilde{R}_{\ld}$}
Following \cite{Maf05}, we utilize a modified quiver representation space $\widetilde{R}_{\ld}$ as in Figure~\ref{figure:tS} below:

\begin{figure}[ht!]
\caption{Modified quiver representations in $\widetilde{R}_{\ld}$.}
 \label{figure:tS}
\[ 
\xymatrix@C=18pt@R=9pt{
\dim \tD_i& n&0&\ldots&0&0
\\
&\ar@/_/[dd]_<(0.2){\tGa_1}
\tD_1
& &  &  &
\\ & & & & & & &  && 
\\ 
&
{\tV_1}  
	\ar@/_/[uu]_>(0.8){\tDe_1} 
	\ar@/^/[r]^{\tA_{1}}
	&   
	\ar@/^/[l]^{\tB_1}
{\tV_{2}}  
	\ar@/^/[r]
	& 
	\ar@/^/[l]
	\cdots
	\ar@/^/[r]
	&
	\ar@/^/[l]
{\tV_{n-2}}  
	\ar@/^/[r]^{\tA_{n-2}}
	& 
{\tV_{n-1}} 
\ar@/^/[l]^{\tB_{n-2}}  
\\
\dim V_i & n-1&n-2&\ldots&2&1
}
\] 
\end{figure}
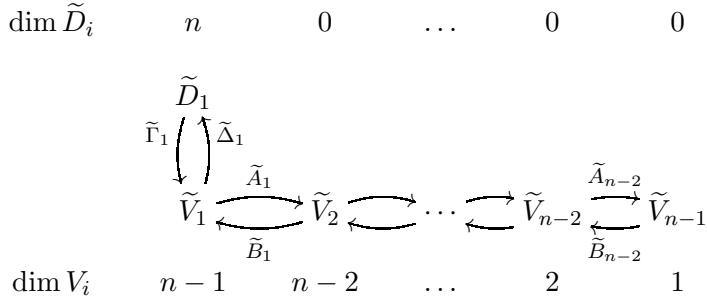

We fix elements $e_1, \ldots, e_{n-k}, f_1, \ldots, f_k$, and we define
 vector spaces $\tD_1, \tV_1, \ldots, \tV_{n-1}$ by 
$\tD_1 = D'_0,
\quad
\tV_i = V_i \oplus D'_i,
$
where
\eq\label{def:D'}
D'_i = \begin{cases}
\< e_1, \ldots, e_{n-k-i}, f_1, \ldots, f_{k-i}\>
&\tif i \leq k-1;
\\
\qquad\hspace{2mm} \< e_1, \ldots, e_{n-k-i}\>
&\tif k \leq i \leq n-k-1;
\\
\qquad\qquad\quad \{0\} &\tif n-k \leq i \leq n-1.
\end{cases}
\endeq
The spaces $D^{(h)}_j$ in \cite{Maf05} are identified as follows:
\eq
\begin{split} 
\<e_i\> \equiv D^{(i)}_{n-k},
\quad
\<f_i\> \equiv D^{(i)}_{k}&\quad\tif n > 2k, 
\\
\<e_i, f_i\> \equiv D^{(i)}_k &\quad\tif n =2k.
\end{split}
\endeq
%

Denote by $\td = (\dim \tD_i)_i = (n,0, \ldots, 0), \tv = (\dim \tV_i)_i = (n-1, \ldots, 1)$ the dimension vectors for $\widetilde{R}_\ld$. They correspond to the partition $\lambda(\td) = \bar{1}$ (see \eqref{def:ldv}).
The advantage of manipulating over such modified quivers is that Proposition~\ref{prop:MS} applies, and hence it produces an isomorphism between the Nakajima quiver variety $M(\td, \tv)$ and the Slodowy variety $\tcS_{\bar{1}}$.

\rmk
Now we identify the linear maps $\tA_i, \tB_i, \tGa_1, \tDe_1$ as block matrices in light of \cite[(9)]{Maf05}, that is, for $1\leq i \leq n-2$ and suitable integers $a,b$, 
\eq\label{eq:TTSS}
\tGa_1
=
\begin{blockarray}{ *{8}{c} }
&  f_b & \dots & e_b \\
\begin{block}{ c @{\quad} ( @{\,} *{7}{c} @{\,} )}
V_1& \TT_{0,V}^{f,b}&\dots & \TT_{0,V}^{e,b}\\
f_a & \TT_{0,f,a}^{f,b}&\dots & \TT_{0,f,a}^{e,b}\\
\vdots & \vdots & \ddots & \vdots \\
e_a  & \TT_{0,e,a}^{f,b}&\dots& \TT_{0,e,a}^{e,b}  \\
\end{block}
\end{blockarray}
\normalsize
~~~~~~, 
\quad 
\tDe_1=
\small 
\begin{blockarray}{ *{8}{c} }
& V_1 &   f_b& \dots &e_b\\
\begin{block}{ c @{\quad} ( @{\,} *{7}{c} @{\,} ) }
f_a & \SS^V_{0,f,a}& \SS_{0,f,a}^{f,b}&\dots & \SS_{0,f,a}^{e,b}\\
\vdots & \vdots & \vdots & \ddots & \vdots \\
 e_a & \SS^V_{0,e,a} & \SS_{0,e,a}^{k,b}&\dots& \SS_{0,e,a}^{e,b}  \\
\end{block}
\end{blockarray}
\normalsize 
~~~~~~,
\endeq
\eq
\tA_i
=
\small 
\begin{blockarray}{ *{8}{c} }
&V_i&  f_b &e_b\\
\begin{block}{ c @{\quad} ( @{\,} *{7}{c} @{\,} ) }
V_{i+1}& \mathbb{A}_i& \TT_{i,V}^{f,b} & \TT_{i,V}^{e,b}\\
f_a &\TT_{i,f,a}^{V}& \TT_{i,f,a}^{f,b} & \TT_{i,f,a}^{e,b}\\
e_a &\TT_{i,e,a}^{V} & \TT_{i,e,a}^{f,b}& \TT_{i,e,a}^{e,b}  \\
\end{block}
\end{blockarray}
\normalsize 
~~~~~~, 
\quad 
\tB_i=
\small 
\begin{blockarray}{ *{8}{c} }
& V_{i+1} &  f_b &e_b\\
\begin{block}{ c @{\quad} ( @{\,} *{7}{c} @{\,} ) }
V_i& \mathbb{B}_i & \SS_{i,V}^{f,b} & \SS_{i,V}^{e,b}\\
f_a & \SS^V_{i,f,a}& \SS_{i,f,a}^{f,b} & \SS_{i,f,a}^{e,b}\\
e_a & \SS^V_{i,e,a} & \SS_{i,e,a}^{f,b}& \SS_{i,e,a}^{e,b}  \\
\end{block}
\end{blockarray}~~~~~~,
\endeq
\normalsize
with respect to the basis vectors (or subspaces when it is more convenient) indicated above and to the left of each matrix. 
For example, we have $\TT_{0,V}^{f,1} = \pi_{V_1}( \tGa_1 |_{\<f_1\>})$ is a linear map $\<f_1\> \to V_1$, where 
$\pi_{V_1}: D'_0 \to V_1 \oplus D'_1$ is the projection. 
In other words, the variables $\mathbb{A}, \mathbb{B}, \SS, \TT$ are certain linear maps with domains and codomains specified as below, for $\phi, \psi \in \{e,f\}$:
\eq
\begin{split}
&\mathbb{A}_i: V_i \to V_{i+1},
\quad
\mathbb{B}_i: V_{i+1}\to V_{i},
\\
&
\SS_{i,\phi,a}^V: V_{i+1} \to \<\phi_a\>,
\quad
\SS_{i,V}^{\phi,a}: \<\phi_a\> \to V_i,
\quad
\SS_{i,\phi,a}^{\psi, b}: \<\psi_b\> \to \<\phi_a\>,
\\
&
\TT_{i,\phi,a}^V: V_{i} \to \<\phi_a\>,
\quad
\TT_{i,V}^{\phi,a}: \<\phi_a\> \to V_{i+1},
\quad
\TT_{i,\phi,a}^{\psi, b}: \<\psi_b\> \to \<\phi_a\>.
\end{split}
\endeq
\endrmk
\subsection{Maffei's system of equations} 
Let $\tA_0 = \tGa_1, \tB_0 = \tDe_1$, 
and let $(x_i, y_i, [x_i, y_i])$ be the fixed $\fsl_2$-triple in $\fsl(D'_i)$ uniquely determined by
\eq\label{def:sl2}
\begin{aligned}
x_i(e_h) &= 
	\begin{cases}
	e_{h-1} &\tif 1< h \leq n-k-i, 
	\\
	\:\:\: 0 &\otw,
	\end{cases}
&&
y_i(e_h) = 
	\begin{cases}
	h(n-k-i-h)e_{h+1} &\tif 1\leq h < n-k-i, 
	\\
	\qquad \quad 0 &\otw,
	\end{cases}
\\
x_i(f_h) &= 
	\begin{cases}
	f_{h-1} &\tif 1< h \leq k-i, 
	\\
	\:\:\: 0 &\otw,
	\end{cases}
&&
y_i(f_h) = 
	\begin{cases}
	h(k-i-h) f_{h+1} &\tif 1\leq h < k-i, 
	\\
	\qquad \quad 0 &\otw.
	\end{cases}    
\end{aligned}
\endeq
\defn[cf. {\cite[Defn.~16]{Maf05}}] 
Let $\ld = (n-k,k)$.
An admissible quadruple $(\tA, \tB, \tGa, \tDe)$ in $\widetilde{R}_{\ld}$  is called {\em transversal} if the following system of equations holds ($0\leq i \leq n-2$):
\eq\label{eq:M1a}
\left[
\left(
\pi_{D'_i} \tB_i \tA_i|_{D'_i} -x_i \right), y_i
\right] = 0,
\endeq
\eq\label{eq:M1b}
\begin{aligned}
\TT^{f,b}_{i,f,a} = \TT^{e,b}_{i,e,a}  &= 0,        
&\tif b > a+1;
&&\SS^{f,b}_{i,f,a} = \SS^{e,b}_{i,e,a}  &= 0,        
&\tif b > a;  
	\\
\TT^{f,b}_{i,f,a} = \TT^{e,b}_{i,e,a}  &= \textup{id},          
&\tif b = a+1;
&&\SS^{f,b}_{i,f,a} = \SS^{e,b}_{i,e,a}  &= \textup{id},          
&\tif b = a;  
	\\
\TT^{e,b}_{i,f,a}  &= 0,         
&\tif b \geq a+1; 
&&\TT^{f,b}_{i,e,a}  &= 0,         
&\tif b \geq a+1+2k-n; 
	\\
\SS^{e,b}_{i,f,a}  &= 0,         
&\tif b \geq a; 
&&\SS^{f,b}_{i,e,a}  &= 0,         
&\tif b \geq a+2k-n;
	\\
\TT^{V}_{i,j,a}        &= 0;         
&&&   
\SS^{j,b}_{i,V}        &= 0;
	\\
\TT^{j,b}_{i,V}    &= 0,         
&\tif b \neq 1;
&&\SS^{V}_{i,j,a} &=0
&\tif a \neq j-i.
\end{aligned}
\endeq
\enddefn
Denote the subspace in $\widetilde{R}_{\ld}$ consisting of transversal (hence admissible) and stable elements by 
\eq\label{def:cT+}
\fT^+_{\ld} = \{(\tA, \tB, \tGa, \tDe)\in \widetilde{R}_{\ld}\mid
\textup{conditions }
\eqref{eq:M1a}, \eqref{eq:M1b} \eqref{eq:M2}, \eqref{eq:M3}
\textup{ hold} \},
\endeq
where the relations other than the transversal ones are
\begin{align}
\textup{(admissibility)}~&\tB_i\tA_i = \tA_{i-1} \tB_{i-1} + \tGa_i \tDe_i
\quad \mbox{for } 1\leq i\leq n-1, 
\label{eq:M2}
\\
\textup{(stability)}~&\Im \: \tA_{i-1} + \sum_{j\geq i} \Im \: \tGa_{j\to i} = \tV_i
\quad \mbox{for } 1\leq i\leq n-1,
\label{eq:M3}
\end{align}
where $\tGa_{j\to i}$ is defined similarly as \eqref{def:Gaij}.

\rmk
The system of equations \eqref{eq:M1b} is not the easiest to work with. 
For example, it implies that the map $\tGa_1$ must be of the following form:
\eq\label{eq:TTSS2}
\begin{tikzpicture}[baseline={(0,0)}, scale = 0.8]
\draw (1.2, 1.8) node {\scalebox{1.5}{$0$}};
\draw (-4.5, -.5) node {\scalebox{1.5}{$\TT_{0,e,\bullet}^{e,\bullet}$}};
\draw (7.2, 1.8) node {\scalebox{1.5}{$0$}};
\draw (-4.7, -2.7) node {\scalebox{1.5}{$\TT_{0,f,\bullet}^{e,\bullet}$}};
\draw (0, -2) node {\scalebox{1.5}{$0$}};
\draw (4.5, -.5) node {\scalebox{1.5}{$\TT_{0,e,\bullet}^{f,\bullet}$}};
\draw (8.2, -1.7) node {\scalebox{1.5}{$0$}};
\draw (4.5, -2.7) node {\scalebox{1.5}{$\TT_{0,f,\bullet}^{f,\bullet}$}};
  \draw (0,0) node {$
  \tGa_1
=
  \small 
\begin{blockarray}{ *{12}{c} }
&  e_1 & e_2 & \dots &\dots &\dots & e_{n-k} & f_1 & \dots & \dots & f_k 
\\
\begin{block}{ c @{\quad} ( @{\,} cccccc|ccccc @{\,} )}
V_1& \TT_{0,V}^{e,1}& 0 & \dots & \dots  & \dots& 0 & \TT_{0,V}^{f,1} & \dots & \dots & 0
\\
\cline{1-11}
e_1 & \TT_{0,e,1}^{e,1}& 1 &  & & &  &  &  &   & 
\\
\vdots & & \ddots & \ddots &  &  & &0 &  &   & 
\\
\vdots & &  & \ddots & \ddots &  & & & \ddots &   & 
\\
\vdots & &  &  & \ddots & \ddots &  & & & \ddots  & 
\\
e_{n-k-1} &  & &&& \TT_{0,e,n-k-1}^{e,n-k-1}&1 &  & && 0
\\
\cline{1-11}
f_1 & \TT_{0,f,1}^{e,1} &0 &&&&&\TT_{0,f,1}^{f,1} &1&&
\\
\vdots &  & \ddots &\ddots &&&& &\ddots&\ddots
\\
f_{k-1}
&&&\TT_{0,f,k-1}^{e,k-1}&0&&&&&\TT_{0,f,k-1}^{f,k-1}&1
\\
\end{block}
\end{blockarray}
\normalsize
~~~~.
$};
\end{tikzpicture}
\endeq
One can then solve for all the unknown variables $\TT_{0,\bullet,\bullet}^{\bullet,\bullet}$ using stability and admissibility conditions, together with the first transversality condition \eqref{eq:M1a}.
In general the solutions are very involved (see \cite[Lemma~18]{Maf05}.
We will use the proposition below to show that the solutions are actually very simple in our setup.
\endrmk
\begin{prop}\label{prop:Phi}
Let $\ld = (n-k,k)$.
Let $(A, B, \Gamma, \Delta) \in \Lambda_{\ld}$. 
\begin{enumerate}[(a)]
\item There is a unique element $(\tA, \tB, \tGa, \tDe) \in \fT_{\ld}$ such that
\eq\label{eq:uniq}
\mathbb{A}_i = A_i,
\quad
\mathbb{B}_i = B_i,
\quad
\Gamma_k = \TT^{f,1}_{k-1,V},
~
\Gamma_{n-k} = \TT^{e,1}_{n-k-1,V},
\quad
\Delta_k = \SS^{V}_{k-1,f,1},
~
\Delta_{n-k} = \SS^{V}_{n-k-1,e,1}.
\endeq
\item The assignment in part (a) restricts to a $\GL(V)$-equivariant isomorphism $\Phi : \Lambda^+_{\ld} \overset{\sim}{\to} \fT^+_{\ld}$.
\item The map $\Phi$ induces an isomorphism $\Phi_M:M_{\ld} \overset{\sim}{\to} p_{\ld}(\fT^+_{\ld})$.
\end{enumerate}
\end{prop}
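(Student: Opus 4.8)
\emph{Approach and part (a).} The idea is to regard $\fT_{\ld}$ and $\Lambda_{\ld}$ as solution sets of matrix equations and to solve the former by triangular elimination, in the spirit of \cite[Lemma~18]{Maf05}; the new feature is that the normalization \eqref{eq:uniq} makes this solution transparent, which is exactly what Lemmas~\ref{lem:hell} and \ref{lem:hell2} will exploit. Fix $(A,B,\Gamma,\Delta)\in\Lambda_{\ld}$. The transversality relations \eqref{eq:M1b} already determine almost every block of a transversal quadruple $(\tA,\tB,\tGa,\tDe)$: the $V_i\to D'_i$ blocks of $\tA_i$ and the $D'_{i+1}\to V_i$ blocks of $\tB_i$ vanish; inside each string $\<e_\bullet\>$, $\<f_\bullet\>$ of $D'_i$ the blocks of $\tA_i,\tB_i$ are lower triangular with identity on the relevant (super)diagonal; the cross-string blocks vanish above the indicated lines; and the only surviving interface blocks are $\TT^{e,1}_{i,V},\TT^{f,1}_{i,V}$ into $V_{i+1}$ and the single blocks $\SS^{V}_{i,e,n-k-i},\SS^{V}_{i,f,k-i}$ out of $V_{i+1}$. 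Thus a transversal quadruple is specified by $(\mathbb{A}_i,\mathbb{B}_i)_i$, the interface blocks, and the below-superdiagonal blocks of the $D'_i$; and \eqref{eq:uniq} forces $\mathbb{A}_i=A_i$, $\mathbb{B}_i=B_i$, and the interface blocks at $i=k-1$ and $i=n-k-1$ to equal $\Gamma_k,\Gamma_{n-k},\Delta_k,\Delta_{n-k}$. One now checks that the admissibility relations \eqref{eq:M2}, read blockwise, form a triangular system whose remaining unknowns (the other interface blocks, and the below-superdiagonal and cross blocks) are uniquely solved in terms of $(A,B,\Gamma,\Delta)$ and this boundary data, the identity (super)diagonals being precisely what makes the system triangular. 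In particular the $V_i\to V_i$ part of \eqref{eq:M2} reproduces \eqref{eq:L1} for $(A,B,\Gamma,\Delta)$, so it holds automatically (the term $\Gamma_i\Delta_i$ being fed in by the interface blocks), and the $D'_i\to D'_i$ part together with \eqref{eq:M1a} forces $\pi_{D'_i}\tB_i\tA_i|_{D'_i}=x_i$ on the nose. Reading the construction backward shows the resulting quadruple lies in $\fT_{\ld}$, and it is the unique element of $\fT_{\ld}$ satisfying \eqref{eq:uniq}; call it $\Phi(A,B,\Gamma,\Delta)$.

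\emph{Part (b).} Let $\GL(V)=\prod_i\GL(V_i)$ act on $\widetilde{R}_{\ld}$ through the embedding $\GL(V_i)\hookrightarrow\GL(\tV_i)$, $g_i\mapsto g_i\oplus\mathrm{id}_{D'_i}$. This is the restriction of the $\GL(\tV)$-action, and it commutes with the fixed operators $x_i,y_i$, so the equations \eqref{eq:M1a}--\eqref{eq:M3} defining $\fT_{\ld}$ are $\GL(V)$-equivariant; by the uniqueness in part (a), $\Phi$ is therefore $\GL(V)$-equivariant, which one also sees directly from \eqref{def:GL action}: $\mathbb{A}_i\leftrightarrow A_i$ and $\mathbb{B}_i\leftrightarrow B_i$ transform correctly, the interface blocks transform as maps of $\Gamma$- and $\Delta$-type, and every block equal to $0$ or $\mathrm{id}$ is fixed. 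The inverse of $\Phi$ is the regular map extracting the blocks listed in \eqref{eq:uniq}, so $\Phi\colon\Lambda_{\ld}\overset{\sim}{\to}\fT_{\ld}$ is an isomorphism of varieties. To match stable loci, I would check that $\Im\tA_{i-1}+\sum_{j\ge i}\Im\tGa_{j\to i}=\tV_i$ for $\Phi(A,B,\Gamma,\Delta)$ if and only if \eqref{eq:L2} holds for $(A,B,\Gamma,\Delta)$: the identity (super)diagonals in $\tGa_1$ and in the $\tA_\bullet,\tB_\bullet$ force the summand $D'_i$ of $\tV_i$ into the left-hand span for free, and modulo $D'_i$ the left-hand side is exactly $\Im A_{i-1}+\sum_{j\ge i}\Im\Gamma_{j\to i}\subseteq V_i$. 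Hence $\Phi$ restricts to a $\GL(V)$-equivariant isomorphism $\Lambda^+_{\ld}\overset{\sim}{\to}\fT^+_{\ld}$.

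\emph{Part (c).} Since a transversal quadruple is in particular admissible and stable, $\fT^+_{\ld}\subseteq\Lambda^+(\td,\tv)$ with $(\td,\tv)$ as in \eqref{eq:Nakdv}, and the restriction of $p_{\ld}$ to $\fT^+_{\ld}$ is the composite $\fT^+_{\ld}\hookrightarrow\Lambda^+(\td,\tv)\to M(\td,\tv)$, i.e.\ the restriction of the $\GL(\tV)$-quotient map. One checks that the $\GL(\tV)$-orbit of a point of $\fT^+_{\ld}$ meets $\fT^+_{\ld}$ in exactly one $\GL(V)$-orbit: the transversal normal form \eqref{eq:M1b} is rigid enough that any $\GL(\tV)$-conjugation preserving it respects the decomposition $\tV_i=V_i\oplus D'_i$ and acts as the identity on each $D'_i$, hence lies in the image of $\GL(V)$. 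Therefore $p_{\ld}(\fT^+_{\ld})\cong\fT^+_{\ld}/\GL(V)$, and combining this with part (b) and with $M_{\ld}=\Lambda^+_{\ld}/\GL(V)$ from \eqref{def:M}, the $\GL(V)$-equivariant isomorphism $\Phi$ descends to the desired isomorphism $\Phi_M\colon M_{\ld}\overset{\sim}{\to}p_{\ld}(\fT^+_{\ld})$.

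\emph{Main obstacle.} The one genuinely technical point is the middle of part (a): verifying that \eqref{eq:M1a}, together with admissibility and the blocks already fixed by \eqref{eq:M1b} and \eqref{eq:uniq}, forces $\pi_{D'_i}\tB_i\tA_i|_{D'_i}=x_i$ exactly, and that the induced recursion for the remaining below-superdiagonal and cross blocks closes up consistently. This is precisely the computation behind \cite[Lemma~18]{Maf05}, here re-run with the simpler normalization \eqref{eq:uniq}; once it is in place, the equivariance in (b) and the descent in (c) are essentially formal.
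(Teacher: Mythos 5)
The paper's own ``proof'' of Proposition~\ref{prop:Phi} is a one-line citation: it is declared a special case of \cite[Lemmas~18, 19]{Maf05}. Your proposal is, in effect, an expanded sketch of what those two lemmas actually do, specialized to the present normalization \eqref{eq:uniq}: Lemma~18 of \cite{Maf05} is your part~(a) (the blockwise triangular elimination showing that \eqref{eq:M1a}--\eqref{eq:M1b} plus admissibility determine all remaining blocks uniquely from the boundary data), and Lemma~19 is your parts~(b)--(c) (equivariance, matching of stable loci, and the rigidity statement that a $\GL(\tV)$-orbit meets $\fT^+_{\ld}$ in exactly one $\GL(V)$-orbit). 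So the route is the same; you have simply unpacked the citation. The outline is sound, and your identification of the two genuine crux points is accurate. The caveat is that both cruxes are asserted rather than proved: in (a) the claim that the recursion for the below-superdiagonal and cross blocks ``closes up consistently'' is exactly the nontrivial computation of \cite[Lemma~18]{Maf05}, and in (c) the claim that any $\GL(\tV)$-element preserving the transversal normal form respects $\tV_i = V_i \oplus D'_i$ and is the identity on $D'_i$ is the nontrivial content of \cite[Lemma~19]{Maf05}. As written, your argument is a correct roadmap that still ultimately leans on Maffei's computations at the same two places the paper does; this is acceptable here precisely because the paper itself treats the proposition as a citation, but it would not stand alone as an independent proof without carrying out those two verifications.
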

\proof
This is a special case of \cite[Lemmas~18, 19]{Maf05}.
\endproof
Now we are in a position to define Maffei's isomorphism $\tphi_\ld: M_{\ld} \overset{\sim}{\to} \tcS_{\ld}$.
Recall $\Lambda^+_{\bullet}$ from \eqref{def:L+}, $p_{\bullet}$ from \eqref{def:p}, $M_{\bullet}$ from \eqref{def:M}, $ \tphi(\td,\tv)$ from Proposition~\ref{prop:MS} with codomain $\tcS_{\bar{1}}$ for the partition $\ld(\td) = \bar{1}$, $\fT^+_{\ld}$ from \eqref{def:cT+}.
Finally, $\tphi_{\ld}$ is defined such that the lower right corner of the diagram below commute:
\eq\label{def:tphi}
\begin{tikzcd}
\Lambda^+_{\bar{1}}
\ar[r, "{p_{\bar{1}}}", twoheadrightarrow] 
& 
M_{\bar{1}}
\ar[r, "{\tphi(\td,\tv)}", "\sim"'] 
&
\tcS_{\bar{1}}
\\
\fT^+_{\ld}
\ar[r, twoheadrightarrow] \ar[u, hookrightarrow]
& 
p_{\bar{1}}(\fT^+_{\ld})
\ar[r, "\sim"'] \ar[u, hookrightarrow]
&
\tphi(\td,\tv)\circ p_{\bar{1}}(\fT^+_{\ld}).
\ar[u, hookrightarrow]
\\
\Lambda^+_{\ld}
\ar[r, "p_{\ld}", twoheadrightarrow]
\ar[u, "\Phi", "\sim"']
&
M_{\ld}
\ar[r, "\tphi_{\ld}", dashrightarrow, "\sim"']
\ar[u, "\Phi_M", "\sim"']
&
\tcS_{\ld}
\ar[u, equal]
\end{tikzcd}
\endeq




 

\begin{prop} \label{prop:MS2}
The map $\tphi_{\ld}: M_{\ld} \to \tcS_{\ld}$ defined above is an isomorphism of algebraic varieties.
\end{prop}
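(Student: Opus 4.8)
The plan is to show that $\tphi_\ld$ is well-defined, bijective, and bi-regular by assembling the three isomorphisms already established in the excerpt. First I would observe that the diagram \eqref{def:tphi} is the blueprint: the top row composes the projection $p_{\bar 1}$ with Nakajima's isomorphism $\tphi(\td,\tv)$ from Proposition~\ref{prop:MS}, which is already known to be an isomorphism $M_{\bar 1}\xrightarrow{\sim}\tcS_{\bar 1}$; the left column is the isomorphism $\Phi_M:M_\ld\xrightarrow{\sim}p_\ld(\fT^+_\ld)$ from Proposition~\ref{prop:Phi}(c); and the middle and right vertical maps are the inclusions coming from $\Phi:\Lambda^+_\ld\xrightarrow{\sim}\fT^+_\ld$ (Proposition~\ref{prop:Phi}(b)) together with the inclusion $\fT^+_\ld\hookrightarrow\Lambda^+_{\bar 1}$. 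So $\tphi_\ld$ is \emph{defined} as the composite $\tphi(\td,\tv)\circ p_{\bar 1}|_{\fT^+_\ld}\circ\Phi\circ(\text{section of }p_\ld)$, and the content of the proposition is that this composite is independent of choices and is an isomorphism of varieties.

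The key steps, in order, are as follows. \textbf{(1) Well-definedness on orbits.} I must check that $\tphi_\ld$ descends to $M_\ld=\Lambda^+_\ld/\GL(V)$: given $[A,B,\Gamma,\Delta]\in M_\ld$, apply $\Phi$ to get $(\tA,\tB,\tGa,\tDe)\in\fT^+_\ld\subseteq\Lambda^+_{\bar 1}$ and then $p_{\bar 1}$ followed by $\tphi(\td,\tv)$ to land in $\tcS_{\bar 1}$. Since $\Phi$ is $\GL(V)$-equivariant (Proposition~\ref{prop:Phi}(b)) and $\GL(\tV)\supseteq\GL(V)$ via the block-diagonal embedding fixing the $D'_i$-summands, two representatives of the same $\GL(V)$-orbit map into the same $\GL(\tV)$-orbit, hence to the same point of $M_{\bar 1}$, hence to the same point of $\tcS_{\bar 1}$. \textbf{(2) Image lands in $\tcS_\ld$.} One computes, as in Proposition~\ref{prop:MS}, that the resulting flag is $F_\bullet=(0\subset\ker\tGa_1\subset\ker\tGa_{1\to 2}\subset\cdots)$ with nilpotent $u=\tDe_1\tGa_1$, and the transversality equations \eqref{eq:M1a} together with the identification \eqref{def:sl2} of the $\fsl_2$-triples force $[u-x,y]=0$ where $x$ has Jordan type $\ld=(n-k,k)$; this is exactly the content of Maffei's Lemmas~18--19, which identify $\tcS_\ld$ as the locus cut out inside $\tcS_{\bar 1}$ by these conditions. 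Thus the image is precisely $\tcS_\ld$, establishing $\tphi_\ld:M_\ld\to\tcS_\ld$. \textbf{(3) Bijectivity and biregularity.} All four maps in the bottom-right square ($\Phi_M$, the inclusion $p_{\bar 1}(\fT^+_\ld)\hookrightarrow M_{\bar 1}$, $\tphi(\td,\tv)$, and the identity $\tcS_\ld=\tcS_\ld$) are morphisms of varieties with the first, third, and fourth being isomorphisms; commutativity of the square, which is how $\tphi_\ld$ was defined, shows $\tphi_\ld$ equals the restriction of the isomorphism $\tphi(\td,\tv)$ to the locally closed subvariety $p_{\bar 1}(\fT^+_\ld)$, precomposed with the isomorphism $\Phi_M$. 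A restriction of an isomorphism of varieties to a locally closed subvariety, landing in its image, is again an isomorphism onto that image; hence $\tphi_\ld$ is an isomorphism of algebraic varieties.

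The main obstacle I expect is \textbf{Step (2)}: verifying that the image of $\fT^+_\ld$ under $\tphi(\td,\tv)\circ p_{\bar 1}$ is exactly $\tcS_\ld$ and not something larger or smaller. This requires understanding how the transversality system \eqref{eq:M1a}--\eqref{eq:M1b} on the quiver side translates, under the flag construction $[A,B,\Gamma,\Delta]\mapsto(\Delta_1\Gamma_1, F_\bullet)$, into the defining condition $[u-x,y]=0$ of the Slodowy slice $\cS_\ld$ for an $\fsl_2$-triple $(x,y,h)$ with $x$ of type $(n-k,k)$. The subtlety is that $x$ and $y$ in the Slodowy slice must be built compatibly from the block data $D'_i$ and the $\fsl_2$-triples $(x_i,y_i)$ of \eqref{def:sl2}; tracking this compatibility is precisely where Maffei's argument in \cite[Lemmas~18, 19]{Maf05} does its real work, and I would invoke that result rather than redo the computation. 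Once Step~(2) is granted, the rest is formal diagram-chasing of isomorphisms already in hand.
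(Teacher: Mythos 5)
Your proposal is correct and follows essentially the same route as the paper, which simply cites Maffei's Theorem~8 (specialized to $N=n$, $r=(1,\ldots,1)$, $x$ of Jordan type $(n-k,k)$); your three steps are an unpacking of the diagram \eqref{def:tphi} with the substantive content---that the transversal locus maps onto exactly $\tcS_\ld$---still deferred to Maffei. One small correction: that image identification is part of the proof of Maffei's Theorem~8 itself, not of Lemmas~18--19, which only establish the correspondence $\Lambda^+_\ld \simeq \fT^+_\ld$ used in Proposition~\ref{prop:Phi}.
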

 \proof
 This is a special case of a stronger result \cite[Thm. 8]{Maf05} for an arbitrary partition $\ld$. We are done by setting $\ld = (n-k,k)$, $N = n$, $r = (1, \ldots, 1)$, and $x\in\cN$ of Jordan type $\ld = (n-k,k)$.
 \endproof


\section{Explicit descriptions of Maffei-Nakajima isomorphism} \label{sec:exp}

\subsection{Flag realization of quiver variety $M_{(n-k,k)}$} \label{sec:hell}
For now, let $\ld = (n-k,k)$. 
In the following we define auxiliary subspaces of $D'_0 = \<e_1, \ldots, e_{n-k}, f_1, \ldots, f_k\>$ using Young diagram combinatorics.
We first assign basis elements to boxes in the two-row Young diagram $\lambda$ as the following: 
\eq\label{eq:YT}
\ytableausetup{mathmode, boxsize=2.5em}
\begin{ytableau}
e_1 & e_2 &  \dots &  e_k &   \dots &\scriptstyle e_{n-k} \\
f_1 & f_2 &  \dots &  f_k 
\end{ytableau}
\endeq
For any subdiagram $\mu$ of $\lambda$, let $D'_\mu$ be the subspace of $D'_0$ spanned by elements in the corresponding boxes in $\mu$.
We denote by $\ld[i]$ the subdiagram of $\ld$ obtained by deleting the rightmost $i$ boxes for each row, i.e.,
\eq
\ld[i] = \begin{cases}
(n-k-i,k-i) 
&\tif i<k;
\\
(n-k-i) 
&\tif i\geq k.
\end{cases}
\endeq
Then the subspaces $D'_i$ (see \eqref{def:D'}) is equal to $D'_{\ld[i]}$ as in the following:
\eq
\ytableausetup{mathmode, boxsize=2.5em}
\begin{array}{ccc}
\begin{ytableau}
e_1 & e_2 &  \dots &  e_{k-i} &   \dots &\scriptstyle e_{n-k-i} \\
f_1 & f_2 &  \dots &  f_{k-i} 
\end{ytableau}
&
\begin{ytableau}
e_1 & e_2 &    \dots & \scriptstyle e_{n-k-i} \\
\end{ytableau}
\\~\\
D'_i = D'_{(n-k-i, k-i)} \quad (i < k)
&  D'_i = D'_{(n-k-i)} \quad (i \geq k) 
\end{array}
\endeq
For a subdiagram $\mu = (\mu_1, \mu_2)$ of $\lambda$ that does not contain the rightmost box in any row of $\lambda$, we denoted by $\mu^+$ the skew diagram obtained by shifting $\mu$ one row to the right, i.e.,
\eq\label{def:+}
\mu^+ = (\mu_1 +1, \mu_2 +1)/(1,1).
\endeq
In order to describe the explicit solutions, we will need two Young subdiagrams $\ld[i]/ \ld[i+1]$ and $\ld[i]/ \ld[i+1]^+$ of $\lambda$. The corresponding subspaces are
\eq
D'_{\ld[i] / \ld[i+1]} =\begin{cases}
\<e_{n-k-i}, f_{k-i}\> &\tif i < k;
\\
\<e_{n-k-i}\> &\tif i \geq k,
\end{cases} 
\quad
D'_{\ld[i]/ \ld[i+1]^+} =\begin{cases}
\<e_1,f_1\> &\tif i < k;
\\
\<e_1\> &\tif i \geq k.
\end{cases}
\endeq
By a slight abuse of notation, we denote by $\Gamma_{\to i}$ the linear assignment (without specifying its domain) given by
\eq
e_\bullet \mapsto \Gamma_{n-k\to i}(e) \in V_i,
\quad
f_\bullet \mapsto \Gamma_{k\to i}(f) \in V_i.
\endeq
For example, both linear maps below are denoted by $\Gamma_{\to 1}$:
\eq
\begin{split}
&
D'_{\ld/\ld[1]^+} = \<e_1, f_1\> \to V_1, \quad \mu e_1 + \nu f_1 \mapsto 
\mu\Gamma_{n-k\to1}(e)
+
\nu  \Gamma_{k \to 1}(f),
\\
&
D'_{\ld[k]/\ld[k+1]^+} = \<e_{n-2k}\> \to V_1, \quad \mu e_{n-2k} \mapsto \mu\Gamma_{n-k\to1}(e).
\end{split}
\endeq
We define the obvious composition by
\eq\label{def:Deij}
\Delta_{j\to i} = 
\begin{cases} 
\Delta_i B_i  \cdots B_{j-1}  &\tif j \geq i, 
\\
\Delta_i A_{i-1} \cdots A_j  &\tif j \leq i,
\end{cases}
\endeq
and then define $\Delta_{i\to }$ similarly.
\begin{lemma}[Explicit solutions to Maffei's system]\label{lem:hell}
Let $(\tA, \tB, \tGa, \tDe) \in \fT^+_{(n-k,k)}$ and 
$\Phi\inv(\tA, \tB, \tGa, \tDe) = (A,B,\Gamma, \Delta) \in \Lambda^+_{(n-k,k)}$.
Moreover, assume that the following equation hold: 
\eq\label{eq:T}
\Delta_{i \to j} \Gamma_{j\to i} = 0
\quad
(2\leq i \leq j \leq n-1).
\endeq
Then $(\tA, \tB, \tGa, \tDe)$ is determined by, for $1\leq i \leq n-2$: 
\eq
\begin{split}
\tGa_1:& 
f_{k} \mapsto f_{k-1} \mapsto \ldots \mapsto f_2 \mapsto f_1 \mapsto \Gamma_{k \to 1}(f),
\quad
e_{n-k} \mapsto  \ldots \mapsto e_1 \mapsto \Gamma_{n-k \to 1}(e),
\\
\tDe_1:
&v \mapsto \nu e_{n-k} + \mu f_k ~\tif~ \Delta_{1\to n-k}(v) = \nu e, \Delta_{1\to k}(v) = \mu f,
\quad
\textup{for}
\quad 
v\in V_1,
\\
& e_j \mapsto e_j, 
\quad 
f_j \mapsto f_j
\quad
\textup{for all}
\quad
j,
\\
\tA_i: &v \mapsto A_i(v) \quad \textup{for} \quad v \in V_i,
\\
&f_{k-i} \mapsto \ldots \mapsto f_1 \mapsto \Gamma_{k \to i+1}(f),
\quad
e_{n-k-i} \mapsto  \ldots \mapsto e_1 \mapsto \Gamma_{n-k \to i+1}(e),
\\
\tB_i: &v \mapsto B_i(v) + \nu e_{n-k-i} + \mu f_{k-i} ~\tif~ \Delta_{i+1\to n-k}(v) = \nu e, \Delta_{i+1\to k}(v) = \mu f,
\quad
\textup{for}
\quad 
v\in V_{i+1},
\\
& e_j \mapsto e_j, 
\quad 
f_j \mapsto f_j
\quad
\textup{for all}
\quad
j.
\end{split}
\endeq

It is convenient to visualize the linear maps as block matrices as below:
\eq
\tGa_1
=
\begin{blockarray}{ *{8}{c} }
&  D'_{\ld[0] / \ld[1]^+} & D'_{\ld[1]^+} \\
\begin{block}{ c @{\quad} ( @{\,} *{7}{c} @{\,} )}
V_1& \Gamma_{\to1}&0\\
D'_1 & 0& I & \\
\end{block}
\end{blockarray}
\normalsize
~~~~~, 
\quad 
\tDe_1=
\small 
\begin{blockarray}{ *{8}{c} }
& V_1 &  D'_1\\
\begin{block}{ c @{\quad} ( @{\,} *{7}{c} @{\,} ) }
D'_1 & 0 & I \\
D'_{\ld[0]/\ld[1]}& \Delta_{1\to} &   0\\
\end{block}
\end{blockarray}
\normalsize 
~~~~~,
\endeq
\eq
\tA_i
=
\begin{blockarray}{ *{8}{c} }
& V_i & D'_{\ld[i]/\ld_{i+1}^+} & D'_{\ld[i+1]^+} \\
\begin{block}{ c @{\quad} ( @{\,} *{7}{c} @{\,} )}
V_{i+1}& {A}_i&\Gamma_{\to i+1} & 0\\
D'_{i+1} & 0 &0 &I   \\
\end{block}
\end{blockarray}
\normalsize
~~~~~,
\quad
\tB_i=
\small 
\begin{blockarray}{ *{8}{c} }
& V_{i+1} &   D'_{i+1}\\
\begin{block}{ c @{\quad} ( @{\,} *{7}{c} @{\,} ) }
V_i & {B}_i & 0 \\
D'_{i+1}& 0 &   I   \\
D'_{\ld[i]/\ld[i+1]} & \Delta_{i+1 \to} &0\\
\end{block}
\end{blockarray}
\normalsize 
~~~~~.
\endeq
Here $I$ represents the linear map sending $e$'s to $e$'s and $f$'s to $f$'s with suitable subscripts.
\end{lemma}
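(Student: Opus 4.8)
The strategy is to reverse-engineer Maffei's system: instead of taking an abstract transversal element and solving for its entries, I will start from a stable admissible $(A,B,\Gamma,\Delta)\in\Lambda^+_\ld$, write down the block matrices in the statement as an \emph{ansatz}, and verify directly that this ansatz (i) lies in $\fT^+_\ld$, i.e., satisfies the transversality equations \eqref{eq:M1a}--\eqref{eq:M1b}, admissibility \eqref{eq:M2}, and stability \eqref{eq:M3}, and (ii) has the prescribed ``corner'' entries \eqref{eq:uniq} identifying it with $\Phi(A,B,\Gamma,\Delta)$. By the uniqueness clause of Proposition~\ref{prop:Phi}(a), this forces the given formulas. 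Along the way I will need to establish the auxiliary identity \eqref{eq:T}, which is really the engine that makes the ansatz close up.

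First I would unwind the block-matrix ansatz into the componentwise maps $\TT^{\bullet,\bullet}_{i,\bullet,\bullet}$, $\SS^{\bullet,\bullet}_{i,\bullet,\bullet}$ of \eqref{eq:TTSS} and check \eqref{eq:M1b} entry by entry: the identity blocks $I$ account for the ``$=\mathrm{id}$'' and ``$=0$'' shift conditions on the $\TT^{\phi,b}_{i,\phi,a}$, $\SS^{\phi,b}_{i,\phi,a}$, the off-diagonal vanishing $\TT^{e,b}_{i,f,a}=\TT^{f,b}_{i,e,a}=0$, etc., follows because $D'_{\ld[i+1]^+}$ is mapped into $D'_{i+1}$ by a block-diagonal $I$ with no $e$-$f$ mixing, and the remaining conditions ($\TT^V_{i,j,a}=0$, $\SS^{j,b}_{i,V}=0$, $\TT^{j,b}_{i,V}=0$ for $b\neq1$, $\SS^V_{i,j,a}=0$ for $a\neq j-i$) are exactly the statement that $\tA_i$ has zero lower-left $D'\to D'_{i+1}$ block except through $\Gamma_{\to i+1}$ landing in $V_{i+1}$, and dually for $\tB_i$. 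Next, the first transversality equation \eqref{eq:M1a} says $\pi_{D'_i}\tB_i\tA_i|_{D'_i}$ commutes with $x_i$ after subtracting $x_i$ itself; computing $\tB_i\tA_i$ in blocks, the $D'_{i}\to D'_{i}$ component is precisely the shift sending $e_{n-k-i}\mapsto e_{n-k-i-1}$, etc. (the $I$ composed with the nilpotent shift built into the $\Gamma_{\to}$-towers), which equals $x_i$ on the nose, so the bracket vanishes trivially.

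The substantive step is admissibility, $\tB_i\tA_i=\tA_{i-1}\tB_{i-1}+\tGa_i\tDe_i$ (with $\tA_0=\tGa_1$, $\tB_0=\tDe_1$, and $\tGa_i=0$ for $i\geq2$ since $\td=(n,0,\dots,0)$). Expanding both sides in the $V_i\oplus D'_i$ decomposition: the $V_i\to V_i$ block reproduces the original ADHM relation \eqref{eq:L1} for $(A,B,\Gamma,\Delta)$; the mixed blocks $V_i\to D'_i$ and $D'_i\to V_i$ produce identities of the shape $\Delta_{i\to}\cdot(\text{shift}) = (\text{shift})\cdot\Delta_{i-1\to}$ and $\Gamma_{\to i}=A_{i-1}\Gamma_{\to i+1}$-type telescoping, all immediate from the definitions \eqref{def:Gaij}, \eqref{def:Deij}; and the $D'_i\to D'_i$ block, after cancellation of the $I$'s, reduces \emph{to the vanishing} $\Delta_{i\to}\Gamma_{\to i}=0$ — this is where \eqref{eq:T} must be proven, and I expect it to be the main obstacle. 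I would derive \eqref{eq:T} by descending induction on $i$ from $i=n-1$ (where $V_{n-1}$ has dimension $1$ and the relevant maps are forced), using the ADHM relation \eqref{eq:L1} to push the composition $\Delta_{i\to}\Gamma_{\to i}$ up one step: $\Delta_{i\to}\Gamma_{\to i}$ compared with $\Delta_{i+1\to}\Gamma_{\to i+1}$ differs by terms $\Delta_{i+1}(B_iA_i - A_{i-1}B_{i-1})\cdots = \Delta_{i+1}\Gamma_i\Delta_i\cdots$, which vanish because for a two-row $\ld$ the framing $D_j$ is nonzero only at $j\in\{k,n-k\}$ and the relevant $\Gamma_j\Delta_j$ contribution is killed by the range constraints in \eqref{eq:2rowdv}; the base of the induction uses stability \eqref{eq:L2} to pin down the one-dimensional end. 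Finally, stability \eqref{eq:M3} for $(\tA,\tB,\tGa,\tDe)$ follows from stability \eqref{eq:L2} for $(A,B,\Gamma,\Delta)$ together with the observation that the extra summands $D'_{i+1}\subseteq\tV_{i+1}$ are hit isomorphically by the $I$-block of $\tA_i$ from $D'_{\ld[i]/\ld[i+1]^+}\subseteq\tV_i$ composed with the framing tower out of $\tGa_1=\tA_0$, so $\Im\tA_{i-1}+\sum_{j\ge i}\Im\tGa_{j\to i}$ already exhausts the $D'$-part and, projecting to $V_i$, exhausts $V_i$ by \eqref{eq:L2}. Having checked membership in $\fT^+_\ld$ and matched the corner entries \eqref{eq:uniq}, uniqueness in Proposition~\ref{prop:Phi}(a) finishes the proof; the block-matrix display is then just a transcription of the componentwise formulas.
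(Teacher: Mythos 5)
Your overall strategy --- write down the block--matrix ansatz built from $(A,B,\Gamma,\Delta)$, verify that it lies in $\fT^+_{\ld}$, match the corner entries \eqref{eq:uniq}, and invoke the uniqueness clause of Proposition~\ref{prop:Phi}(a) --- is exactly the route the paper takes, and your handling of \eqref{eq:M1b}, of the stability condition \eqref{eq:M3}, and of the $V$-blocks of the admissibility condition \eqref{eq:M2} matches the paper's block computations. (One small discrepancy: $\pi_{D'_i}\tB_i\tA_i|_{D'_i}$ is not ``$x_i$ on the nose''; it is $x_i$ plus the corner entry $\Delta_{i+1\to}\Gamma_{\to i+1}$, and \eqref{eq:M1a} holds because that corner, being a map from the lowest to the highest weight spaces, automatically commutes with $y_i$.)

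The genuine gap is your proposed derivation of \eqref{eq:T}, which is indeed, as you suspect, the crux: the $D'$-blocks of \eqref{eq:M2} reduce precisely to $\Delta_{i\to}\Gamma_{\to i}=0$. But the descending induction you sketch cannot close. When you commute $B$'s past $A$'s using \eqref{eq:L1}, the correction terms $\Gamma_j\Delta_j$ at the framed vertices $j\in\{k,n-k\}$ do \emph{not} drop out, and neither the ``straightened'' composite $\Delta_{n-k\to k}\Gamma_{n-k}$ nor $\Delta_k\Gamma_k$ itself is forced to vanish by \eqref{eq:L1} and \eqref{eq:L2} alone. Concretely, for $\ld=(2,2)$ take $V_2=\<u,w\>$, $A_1(1)=u$, $B_1=(0\;1)$, $A_2=(1\;0)$, $B_2(1)=w$, $\Gamma_2=\mathrm{id}$, and $\Delta_2:=\Gamma_2^{-1}(B_2A_2-A_1B_1)$: this quadruple satisfies \eqref{eq:L1} and \eqref{eq:L2}, yet $\Delta_2\Gamma_2=B_2A_2-A_1B_1\neq 0$, so $\Delta_{2\to}\Gamma_{\to 2}\neq 0$. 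Hence \eqref{eq:T} is not a formal consequence of admissibility and stability of $(A,B,\Gamma,\Delta)$; any correct proof must genuinely use the hypothesis that $(A,B,\Gamma,\Delta)=\Phi\inv(\tA,\tB,\tGa,\tDe)$ for a \emph{transversal} element, i.e., extract \eqref{eq:T} from \eqref{eq:M1a}--\eqref{eq:M1b} combined with admissibility of the lift, \emph{before} the ansatz can be certified admissible. Without that input the uniqueness argument does not go through: the honest unique lift may acquire nonzero sub-diagonal entries (e.g.\ $\TT^{e,1}_{0,e,1}$, $\TT^{f,1}_{0,e,1}$), which the ansatz sets to zero. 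Note that the paper's own proof also cites \eqref{eq:T} without deriving it, so this is precisely the step that must be supplied; your proposal is the first to flag it, but the mechanism you offer for it is the step that would fail.
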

\proof
By Proposition~\ref{prop:Phi} we know that $(\tA,\tB,\tGa,\tDe)$ must be the unique element in $\fT^+_{\ld}$ that satisfies \eqref{eq:uniq}, which indeed are satisfied from the construction.
What remains to be shown is that the formulas above do define an element in $\fT^+_{\ld}$.

By construction, we have
\eq\label{eq:tDG}
\tGa_1 \tDe_1=
\small 
\begin{blockarray}{ *{8}{c} }
& V_{1} & D'_{\ld[1]/\ld[2]^+} & D'_{\ld[2]^+}\\
\begin{block}{ c @{\quad} ( @{\,} *{7}{c} @{\,} ) }
V_1 & 0 & \Gamma_{\to1} & 0 
\\
D'_2& 0 & 0&  I   \\
D'_{\ld[1]/\ld[2]} &\Delta_{1\to} &0&0\\
\end{block}
\end{blockarray}
\normalsize 
~~~~,
\quad
\tDe_1 \tGa_1=
\small 
\begin{blockarray}{ *{8}{c} }
&  D'_{\ld /\ld[1]^+}& D'_{\ld[1]^+}\\
\begin{block}{ c @{\quad} ( @{\,} *{7}{c} @{\,} ) }
D'_1& 0&  I   \\
D'_{\ld/\ld[1]}& \Delta_{1\to}\Gamma_{\to 1} &0\\
\end{block}
\end{blockarray}
\normalsize 
~~~~~,
\endeq
\eq\label{eq:tAtB}
\tA_i \tB_i=
\small 
\begin{blockarray}{ *{8}{c} }
& V_{i+1} & D'_{\ld[i+1]/\ld[i+2]^+}& D'_{\ld[i+2]^+}\\
\begin{block}{ c @{\quad} ( @{\,} *{7}{c} @{\,} ) }
V_{i+1} & A_iB_i & \Gamma_{\to i+1}  & 0 
\\
D'_{i+2}& 0& 0&  I   \\
D'_{\ld[i+1]/\ld[i+2]} & \Delta_{i+1\to} &0&0\\
\end{block}
\end{blockarray}
\normalsize 
~~~~~~,
\endeq
\eq\label{eq:tBtA}
\tB_i \tA_i=
\small 
\begin{blockarray}{ *{8}{c} }
& V_i & D'_{\ld[i]/\ld[i+1]^+}& D'_{\ld[i+1]^+}\\
\begin{block}{ c @{\quad} ( @{\,} *{7}{c} @{\,} ) }
V_i& B_i A_i & \Gamma_{\to i}  & 0\\
D'_{i+1}& 0&0&  I   \\
D'_{\ld[i] /\ld[i+1]} &\Delta_{i\to} &\Delta_{i+1\to}\Gamma_{\to i+1}&0\\
\end{block}
\end{blockarray}
\normalsize 
~~~~~~.
\endeq
The admissibility conditions \eqref{eq:M2} follow from comparing the entries in \eqref{eq:tAtB} -- \eqref{eq:tBtA} with the original admissibility condition \eqref{eq:L2} and \eqref{eq:T}. 

By the original stability condition \eqref{eq:L1}, the blocks in the first row for $\tA_i$ have full rank, and hence \eqref{eq:M3} follows.

From \eqref{eq:tBtA} we see that
\eq
\pi_{D'_i} \tB_i \tA_i|_{D'_i} -x_i
=
\small 
\begin{blockarray}{ *{8}{c} }
&  D'_{\ld[i]/\ld[i+1]^+} & D'_{\ld[i+1]^+}\\
\begin{block}{ c @{\quad} ( @{\,} *{7}{c} @{\,} ) }
D'_{i+1}&0&  0   \\
D'_{\ld[i]/\ld[i+1]} &\Delta_{i+1\to}\Gamma_{\to i+1}&0\\
\end{block}
\end{blockarray}
\normalsize 
~~~~~~, 
\endeq
and thus \eqref{eq:M1a} holds.
Finally, a straightforward verification similar to \eqref{eq:TTSS2} shows that \eqref{eq:M1b} is satisfied.
\endproof
We can now combine Lemma~\ref{lem:hell} with the Nakajima-Maffei isomorphism to describe the complete flag assigned to each  quiver representation. 
\begin{thm}[Explicit flag realization of $M_{(n-k,k)}$]\label{thm:Fi}
If $[A,B,\Gamma,\Delta] = \tphi_{(n-k,k)}\inv(x, F_\bullet)$ is a geometric point in $M_{(n-k,k)}$ satisfying \eqref{eq:T},
then we have an explicit description for the flag $F_\bullet$ under $\tphi_{(n-k,k)}$.
Precisely, each $F_i$ is the kernel of the map $D''_0 \oplus \dots \oplus D''_{i-1} \to V_i$ whose blocks are described as below:
\[
\begin{blockarray}{ *{8}{c} }
 & D''_1 & \dots & D''_{t} & \dots & D''_{i} \\
\begin{block}{ c @{\quad} ( @{\,} *{7}{c} @{\,} )}
V_{i}& 
A_{1\to i}\Gamma_{\to 1}
&
\dots
&
A_{t\to i}\Gamma_{\to t}
&
\dots
&\Gamma_{\to i} \\
\end{block}
\end{blockarray}
\normalsize
~~~~~,
\]
where $D''_{t}$, for $1\leq t \leq i$, is the space spanned by  elements in the $t$th column in \eqref{eq:YT} as below:
\eq
D''_{t} 
=
\begin{cases}
\<e_{t}, f_{t}\> &\tif t \leq k, 
\\
\hspace{3mm} \<e_{t}\> &\tif k+1 \leq t \leq n-k.
\end{cases}
\endeq
\end{thm}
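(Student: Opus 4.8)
The plan is to compose the two isomorphisms already at our disposal. By Lemma~\ref{lem:hell} we have, for $(A,B,\Gamma,\Delta)\in\Lambda^+_\ld$, an explicit block-matrix description of $\Phi(A,B,\Gamma,\Delta) = (\tA,\tB,\tGa,\tDe)\in\fT^+_\ld$. By Proposition~\ref{prop:MS} (applied to $(\td,\tv)$, which has partition type $\bar 1$), the flag attached to $[\tA,\tB,\tGa,\tDe]\in M_{\bar 1}$ is
\[
\tF_\bullet = (0\subset\ker\tGa_1\subset\ker\tGa_{1\to2}\subset\cdots\subset\ker\tGa_{1\to n}),
\]
and by the commuting diagram~\eqref{def:tphi} defining $\tphi_\ld$, this is precisely the flag $F_\bullet$ with $\tphi_\ld([A,B,\Gamma,\Delta]) = (x,F_\bullet)$. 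So the whole task reduces to computing $\tGa_{1\to i+1}:\tD_1\to\tV_{i+1}$ and identifying its kernel with the kernel of the claimed map $D''_1\oplus\cdots\oplus D''_i\to V_i$. (I note the slight reindexing: $F_i = \ker\tGa_{1\to i}$ in Proposition~\ref{prop:MS}, and one should check the index bookkeeping against the statement, which writes $F_i$ as a kernel of a map out of $D''_0\oplus\cdots\oplus D''_{i-1}$ — I read $D''_0$ as the zero space in the convention of Remark~\ref{rmk:0}, so the sum is effectively over $D''_1,\dots,D''_{i-1}$; this should be reconciled, but it is purely notational.)

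\textbf{Step 1: unwind the composition.} Recall $\tGa_{1\to i+1} = \tA_i\tA_{i-1}\cdots\tA_1\tGa_1$ (using $\tA_0=\tGa_1$, the defining convention~\eqref{def:Gaij} for the modified quiver). Using the block forms of $\tGa_1$ and $\tA_j$ from Lemma~\ref{lem:hell}, I would multiply these out. The key structural features are: each $\tA_j$ has the block shape $\begin{psmallmatrix}A_j & \Gamma_{\to j+1} & 0\\ 0 & 0 & I\end{psmallmatrix}$ relative to the decomposition $\tV_j = V_j\oplus D'_j$ and $\tV_{j+1}=V_{j+1}\oplus D'_{j+1}$, with $D'_j = D'_{\ld[j]/\ld[j+1]^+}\oplus D'_{\ld[j+1]^+}$; and $\tGa_1 = \begin{psmallmatrix}\Gamma_{\to1}&0\\0&I\end{psmallmatrix}$ relative to $\tD_1 = D'_{\ld[0]/\ld[1]^+}\oplus D'_{\ld[1]^+}$. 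The upshot of the telescoping product should be that on the summand $D'_{\ld[t-1]/\ld[t]^+}$ of $\tD_1$ — which is exactly the $t$th column $D''_t = \langle e_t,f_t\rangle$ (or $\langle e_t\rangle$) after tracking how the $D'_{\mu^+}$-shift sweeps the diagram one column to the right at each application of an $\tA_j$ — the composite $\tGa_{1\to i+1}$ acts through the "identity" blocks until the generator reaches the $V$-row at stage $t$, and is then pushed forward by $A_{t\to i+1}$. That is, restricted to $D''_t$ the map $\tGa_{1\to i+1}$ equals $A_{t\to i+1}\Gamma_{\to t}$ landing in $V_{i+1}\subseteq\tV_{i+1}$, for $t\le i$, while for $t > i$ (the part of $\tD_1$ that has not yet "surfaced") it lands in the $D'$-part of $\tV_{i+1}$ via an injective block.

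\textbf{Step 2: extract the kernel.} Since the $D'$-component of $\tGa_{1\to i+1}$ on $\bigoplus_{t>i}D''_t$ is injective (it is a composition of identity-type blocks) and the $V$-component vanishes there, no nonzero vector with support in $\bigoplus_{t>i}D''_t$ can lie in the kernel; moreover the $D'$-supported and $V_{i+1}$-supported images are in complementary summands of $\tV_{i+1}$. Hence $\ker\tGa_{1\to i+1} = \ker\bigl(\bigoplus_{t\le i}D''_t\to V_{i+1},\ \sum_t\xi_t\mapsto\sum_t A_{t\to i+1}\Gamma_{\to t}(\xi_t)\bigr)$, which (after the index shift) is exactly the claimed description of $F_{i+1}$. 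I would present the telescoping in Step 1 either as a clean induction on $i$ — "assume $\tGa_{1\to i}$ has the stated block form, apply $\tA_i$" — or by a single diagrammatic matrix product, whichever is shorter; the induction is probably cleaner since it makes the column-shifting bookkeeping transparent.

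\textbf{The main obstacle} is the combinatorial bookkeeping of exactly which $D'_\mu$ and $D''_t$ summands appear at each stage and how the $\mu^+$-shift in the block decomposition of $\tA_j$ interacts with the decomposition of $\tD_1$ into columns. One must be careful that, e.g., $D'_{\ld[j]/\ld[j+1]^+}$ really is the $(j+1)$-st column and that composing the "$\Gamma_{\to j+1}$" blocks across consecutive $\tA$'s genuinely produces $A_{t\to i+1}\Gamma_{\to t}$ rather than something off by an index, and that the distinction between $t\le k$ and $t>k$ (two basis vectors versus one) is respected throughout — though this last point is automatic since $\Gamma_{\to t}$ is defined on whichever of $\langle e\rangle,\langle f\rangle$ survives. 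Everything else (admissibility/stability were already verified in Lemma~\ref{lem:hell}; the isomorphism property is Proposition~\ref{prop:MS2}) is inherited, so no genuinely new geometric input is needed — the theorem is a computation riding on Lemma~\ref{lem:hell}.
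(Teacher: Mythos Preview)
Your proposal is correct and follows essentially the same approach as the paper: reduce via Proposition~\ref{prop:MS} to computing $\ker\tGa_{1\to i}$, then use the block forms of Lemma~\ref{lem:hell} to multiply out $\tA_{i-1}\cdots\tA_1\tGa_1$ and read off the kernel. The paper's proof is terser --- it simply records the outcome of your Step~1 as a single block matrix for $\tGa_{1\to i}$ (with an identity block on the columns $D''_t$ for $t>i$ and the blocks $A_{t\to i}\Gamma_{\to t}$ on $D''_1,\dots,D''_i$), from which your Step~2 is immediate; your indexing remark about $D''_0$ versus $D''_1$ is well taken and is indeed a harmless inconsistency in the statement.
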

\proof
By Proposition~\ref{prop:MS} we know that the spaces $F_i$ are determined by the kernels of the maps $\tGa_{1\to i}$.
The assertion follows from a direct computation of $\tGa_{1\to i}$ using Lemma~\ref{lem:hell}, which is,
\eq\label{eq:FiKer}
\tGa_{1\to i}
=
\begin{blockarray}{ *{8}{c} }
 & D''_1 & \dots & D''_{t} & \dots & D''_{i} & D'_{(\ld_1, \ld_2)/(i,i)} \\
\begin{block}{ c @{\quad} ( @{\,} *{7}{c} @{\,} )}
V_{i}& 
A_{1\to i}\Gamma_{\to 1}
&
\dots
&
A_{t\to i}\Gamma_{\to t}
&
\dots
&\Gamma_{\to i} & 0\\
D'_{i}  &0 &\dots & 0 &
\dots & 0 &I   \\
\end{block}
\end{blockarray}
\normalsize
~~~~~.
\endeq
\endproof

It is known (see Proposition~\ref{prop:Spr}) that in any quiver representation corresponding to a geometric point in a Springer fiber, all the $\Delta_i$'s are zero maps , and hence \eqref{eq:T} is automatically satisfied. 

\subsection{Examples}

In this section we provide examples demonstrating how Theorem~\ref{thm:Fi} provides an efficient and explicit way to realize the entire quiver variety $M_{(n-k,k)}$ using the corresponding complete flags in the Slodowy variety $\tcS_{(n-k,k)}$, while generally it is very implicit to apply the Maffei--Nakajima isomorphism.

It can also be seen in Example~\ref{ex:S31} that $\ker \tGa_{1\to i}$ is not  {necessarily} a direct sum of the kernels of the block matrices.

\ex\label{ex:Fi}
The quiver representations in $R_{(2,2)}$ are described as below:
\begin{equation}\label{eq:22-quiver} 
\xymatrix@C=18pt@R=9pt{
&
\ar@/_/[dd]_<(0.2){\Gamma_2}
D_2 =\<e,f\>
&   
\\   
& &    
\\ 
V_1 =\CC
	\ar@/^/[r]^{A_1}
	& 
	\ar@/^/[l]^{B_1} 
V_2  =\CC^2
	\ar@/_/[uu]_>(0.8){\Delta_2} 
	\ar@/^/[r]^{A_{2}}
	& 
	\ar@/^/[l]^{B_2}
V_3 = \CC. 
	}
\end{equation} 
Let $[A,B,\Gamma,\Delta] = \tphi_{(2,2)}\inv(x, F_\bullet)  \in M_{(2,2)}$.
By Theorem~\ref{thm:Fi}, the  flag $F_\bullet$ is described by,
\[
\begin{split}
F_1 &=  
 \ker \Big(\begin{blockarray}{ *{8}{c} }
 & \<f_1, e_1\>  \\
\begin{block}{ c @{\quad} ( @{\,} *{7}{c} @{\,} )}
V_{1}& 
B_1\Gamma_2
\\
\end{block}
\end{blockarray}
\Big)
\normalsize
~~~~~,
\\
F_2 &= 
 \ker \Big(\begin{blockarray}{ *{8}{c} }
 & \<f_1,e_1\> &\<f_2,e_2\>  \\
\begin{block}{ c @{\quad} ( @{\,} *{7}{c} @{\,} )}
V_{2}& 
A_1B_1\Gamma_2
&
\Gamma_2
\\
\end{block}
\end{blockarray}
\Big)
\normalsize
~~~~~,
\\
F_3 &= 
 \ker \Big(\begin{blockarray}{ *{8}{c} }
 & \<f_1,e_1\> &\<f_2,e_2\>  \\
\begin{block}{ c @{\quad} ( @{\,} *{7}{c} @{\,} )}
V_{3}& 
A_2A_1B_1\Gamma_2
&
A_2\Gamma_2
\\
\end{block}
\end{blockarray}
\Big)
\normalsize
~~~~~. 
\end{split}
\]
In particular, if 
$
A_1 = \begin{pmatrix}1 \\ 0\end{pmatrix} = B_2,
A_2 = \begin{pmatrix} 0 & 1 \end{pmatrix} = B_1,
\Gamma_2 = \begin{pmatrix}1&0\\0&1\end{pmatrix},
\Delta = 0,
$
then
\[
F_1 = \ker \begin{pmatrix} 0 & 1 \end{pmatrix} = \<f_1\>,
\quad
F_2 = \ker \begin{pmatrix} 0 & 1 &1& 0 \\ 0&0&0&1 \end{pmatrix} = \<f_1, e_1 - f_2\>,
\quad
F_3 = \ker \begin{pmatrix} 0 & 0 &0& 1  \end{pmatrix} = \<f_1, e_1, f_2\>.
\]
\endex
\ex\label{ex:S31}
The quiver representations in $R_{(3,1)}$ are described as below:
\[
 \xymatrix@-1pc{
D_1=\<f\> \ar@/^/[dd]^{\Gamma_1} &&& &   D_3=\<e\>  \ar@/^/[dd]^{\Gamma_3.}  \\ 
 & & \\ 
V_1=\CC\ar@/^/[uu]^{\Delta_1} \ar@/^/[rr]^{A_1} 
& & \ar@/^/[ll]^{B_1}  V_2=\CC  \ar@/^/[rr]^{A_2}   
& & \ar@/^/[ll]^{B_2}  V_3=\CC  \ar@/^/[uu]^{\Delta_3}\\ 
 }
\] 
If $[A,B,\Gamma,\Delta] = \tphi_{(3,1)}\inv(x, F_\bullet) \in M_{(3,1)}$, then the flag $F_\bullet$ can be described by
\[
\begin{split}
F_1 &= 
 \ker \Big(\begin{blockarray}{ *{8}{c} }
 & \<f_1\> &\<e_1\>  \\
\begin{block}{ c @{\quad} ( @{\,} *{7}{c} @{\,} )}
V_{1}& 
\Gamma_1
&
B_1B_2\Gamma_3
\\
\end{block}
\end{blockarray}
\Big)
\normalsize
~~~~~,
\\
F_2 &
= \ker \Big(\begin{blockarray}{ *{8}{c} }
 & \<f_1\> & \<e_1\>  & \<e_2\> \\
\begin{block}{ c @{\quad} ( @{\,} *{7}{c} @{\,} )}
V_{2}&
A_1\Gamma_1
& 
A_1B_1B_2\Gamma_3
&
B_2\Gamma_3
\\
\end{block}
\end{blockarray}
\Big)
\normalsize
~~~~~,
\\
F_3 & =  \ker\Big( \begin{blockarray}{ *{8}{c} }
 & \<f_1\>& \<e_1\>  & \<e_2\> & \<e_3\>\\
\begin{block}{ c @{\quad} ( @{\,} *{7}{c} @{\,} )}
V_{3}& 
A_2A_1\Gamma_1
&
A_2A_1B_1B_2\Gamma_3
&
A_2B_2\Gamma_3
&
\Gamma_3
\\
\end{block}
\end{blockarray}
\Big)
\normalsize
~~~~~.
\end{split}
\]
In particular, if 
$
A_1 = 1, A_2 =0,
B_1 = 0, B_2 =1,
\Gamma_1 = 1, \Gamma_3 = 1,
\Delta_1 = 0 = \Delta_3,
$
then
\[
F_1 = \ker \begin{pmatrix} 1 & 0 \end{pmatrix} = \<e_1\>,
\quad
F_2 = \ker \begin{pmatrix} 1 & 0 & 1  \end{pmatrix} = \<e_1, f_1 - e_2\>,
\quad
F_3 = \ker \begin{pmatrix} 0 & 0 &0& 1  \end{pmatrix} = \<e_1, f_1, e_2\>.
\]
\endex

\subsection{Explicit flag realization of quiver varieties beyond two rows} \label{sec:rrows}
From now on, we fix a partition $\ld = (\ld_1, \ld_2, \ldots, \ld_r)$ that can be more than two rows.
We will generalize Lemma~\ref{lem:hell} to this arbitrary $\ld$, and then obtain a flag realization of any geometric point in $M_\ld$.

Fix a basis $\{t_{i,j}~|~ 1\leq i \leq r, 1\leq j \leq \ld_i\}$ of $\CC^n$. Treat $t_{i,j}$ as zero if $(i,j)$ is not in the Young diagram $\ld$. 
Fix a nilpotent map $x$ with Jordan type $\ld$ that takes each $t_{i,j}$ to $t_{i,j-1}$. 

For $1\leq i \leq r$, denote the space $D_i$ for the shadow vertex by
\eq
D_i = \< t_j ~|~ \ld_j = i\>.
\endeq
Note that $t_i \in D_{\ld_i}$, and  $d_i = \dim D_i = \sum_{j=1}^r \delta_{i,\ld_j}$. 
Hence, $v_i$'s are uniquely determined by \eqref{eq:dv}. 
Let $V_i$ be a $v_i$-dimensional space which we don't need to specify a basis.

We assign basis elements to boxes in the Young diagram $\lambda$ as the following: 
\eq\label{eq:YT2}
\ytableausetup{mathmode, boxsize=2.5em}
\begin{ytableau}
t_{1,1} & t_{1,2} &  \dots &  \dots &   \dots & t_{1,\ld_1} \\
t_{2,1} & t_{2,2} &  \dots &  t_{2,\ld_2}\\
\vdots & \vdots & \vdots \\
t_{r,1} & \dots & t_{r,\ld_r} 
\end{ytableau}
\endeq
Note that the definition of $D'_\mu$ and $\ld[i]$ in Section~\ref{sec:hell} naturally extends beyond two rows, and hence 
\eq
D'_l = D'_{\ld[l]}  = \< t_{i,j} ~|~1 \leq i \leq r, 1 \leq j \leq \ld_i - l\>.
\endeq

The modified quiver representation space now consists of quadruples $(\tA, \tB, \tGa, \tDe)$ of linear maps of the form
\eq
\tGa_1: D'_0 \to V_1\oplus D'_1, 
\quad
\tA_l: V_l \oplus D'_l \to V_{l+1} \oplus D'_{l+1},
\quad
\tDe_1:V_1\oplus D'_1 \to D'_0,
\quad
\tB_l: V_{l+1} \oplus D'_{l+1} \to V_l \oplus D'_l.
\endeq

For a fixed $v \in V_l$, note that for all $t$, $\Delta_{j \to t}(v) \in D_t$  can be expressed as a linear combination of $t_i$'s which do not belong to $D_u$ if $u \neq t$.
Therefore, we can define coefficients $\nu_{i,j} = \nu_{i,j}(v)$ such that 
\eq\label{eq:nuij}
\sum_{t} \Delta_{j\to t}(v) = \sum_i \nu_{ij} t_i.
\endeq
\begin{lemma}[Explicit solutions to Maffei's system]\label{lem:hell2}
Let $\lambda$ be an arbitrary partition of $n$, and
let $(\tA, \tB, \tGa, \tDe) \in \fT^+_{\ld}$ and $\Phi\inv(\tA, \tB, \tGa, \tDe) = (A,B,\Gamma, \Delta) \in \Lambda^+_{\ld}$.
Moreover, assume that \eqref{eq:T} holds. 
Then $(\tA, \tB, \tGa, \tDe)$ is given by, for $1\leq l \leq n-2$: 
\eq
\begin{split}
\tGa_1:& 
t_{i,j} \mapsto \begin{cases}
\Gamma_{\ld_i \to 1}(t_i) &\tif j=1;
\\
t_{i,j-1} &\otw,
\end{cases}
\\
\tA_l: &v \mapsto A_l(v) \quad \textup{for} \quad v \in V_l,
\\
&t_{i,j} \mapsto \begin{cases}
\Gamma_{\ld_i \to l+1}(t_i) &\tif j=1;
\\
t_{i,j-1} &\otw,
\end{cases}
\\
\tDe_1:
&v \mapsto \sum_i \nu_{i,1} t_{i,\ld_i}  \quad t_{i,j} \mapsto t_{i,j},
\\
\tB_l: &v \mapsto B_l(v)+\sum_i \nu_{i,l+1} t_{i,\ld_i-l}, \quad t_{i,j} \mapsto t_{i,j}.
\end{split}
\endeq
\end{lemma}
\proof
The proof is almost identical to the one for Lemma~\ref{lem:hell}. All we need are the following identities which follow from a lengthy but elementary computation:
\eq
\begin{split}
\tGa_1 \tDe_1:& v \mapsto \sum_{i: \ld_i > 1} \nu_i t_{i,\ld_i-1}
+\sum_{i: \ld_i = 1} \nu_{i,1} \Gamma_{\ld_i \to 1}(t_i), 
 \quad t_{i,j} \mapsto \begin{cases}
\Gamma_{\ld_i \to 1}(t_i) &\tif j=1;
\\
t_{i,j-1} &\otw,
\end{cases}
\\
\tDe_1\tGa_1 :& t_{i,j} \mapsto \begin{cases}
\Delta_{1\to}\Gamma_{ \to 1}(t_{i,1}) &\tif j=1;
\\
t_{i,j-1} &\otw,
\end{cases}
\\
\tA_l\tB_l: &v \mapsto A_l B_l (v) + \sum_{i: \ld_i > 1} \nu_{i,l+1} t_{i,\ld_i-1}
+\sum_{i: \ld_i = 1} \nu_{i,l+1} \Gamma_{\ld_i \to l+1}(t_i),
\\
&t_{i,j} \mapsto \begin{cases}
\Gamma_{\ld_i \to l+1}(t_i) &\tif j=1;
\\
t_{i,j-1} &\otw,
\end{cases}
\\
\tB_l \tA_l: &v \mapsto  B_l A_l(v) + \sum_{i: \ld_i > 1} \nu_{i,l} t_{i,\ld_i-1}
+\sum_{i: \ld_i = 1} \nu_{i,l} \Gamma_{\ld_i \to l}(t_i),
\\
&t_{i,j} \mapsto \begin{cases}
\Gamma_{\ld_i \to l}(t_i) &\tif j=1;
\\
t_{i,j-1} &\otw.
\end{cases}
\end{split}
\endeq
\endproof
Now we can once again use Proposition~\ref{prop:MS} to obtain a flag realization of $M_\ld$ in light of Lemma~\ref{lem:hell2}.
\begin{thm}[Explicit flag realization of $M_\ld$]\label{thm:Fi2}
Let $\lambda$ be an arbitrary partition of $n$.
If $[A,B,\Gamma,\Delta] = \tphi_{\ld}\inv(x, F_\bullet)$ is a geometric point in $M_\ld$ satisfying \eqref{eq:T},
then we have an explicit description for the flag $F_\bullet$ under $\tphi$.
Precisely, each $F_l$ is the kernel of the map $D''_1 \oplus \dots \oplus D''_{l} \to V_l$ whose blocks are described as below:
\[
\begin{blockarray}{ *{8}{c} }
 & D''_1 & \dots & D''_{t} & \dots & D''_{l} \\
\begin{block}{ c @{\quad} ( @{\,} *{7}{c} @{\,} )}
V_{l}& 
A_{1\to l}\Gamma_{\to 1}
&
\dots
&
A_{t\to l}\Gamma_{\to t}
&
\dots
&\Gamma_{\to l} \\
\end{block}
\end{blockarray}
\normalsize
~~~~~,
\]
where $D''_{j} = \< t_{i,j} ~|~ 1\leq i \leq r\>$ is the space spanned by  elements in the $j$th column in \eqref{eq:YT2}.
\end{thm}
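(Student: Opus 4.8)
The plan is to mirror exactly the proof of Theorem~\ref{thm:Fi}, which derived the two-row flag realization from Proposition~\ref{prop:MS} by computing the composite $\tGa_{1\to i}$ using the explicit formulas of Lemma~\ref{lem:hell}; here I would use Lemma~\ref{lem:hell2} in its place. First, I would recall that for the modified data $(\td,\tv)$ the partition is $\bar 1 = (1,\ldots,1)$, so Proposition~\ref{prop:MS} applies to $M(\td,\tv)$ and identifies the flag attached to $[\tA,\tB,\tGa,\tDe]$ as $F_l = \ker \tGa_{1\to l}$, where $\tGa_{1\to l} = \tB_1\cdots$ is the appropriate composite $\tA_{l-1}\cdots \tA_1 \tGa_1$ landing in $\tV_l = V_l \oplus D'_l$ (using the convention $\tA_0 = \tGa_1$ together with \eqref{def:Gaij} applied to the modified quiver). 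Since $\tphi_\ld$ is defined so that the lower-right square of \eqref{def:tphi} commutes and $\Phi$ intertwines $[A,B,\Gamma,\Delta]$ with $[\tA,\tB,\tGa,\tDe]$, the flag attached to $[A,B,\Gamma,\Delta]$ under $\tphi_\ld$ is literally this same $F_\bullet$.

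The core computation is to evaluate $\tGa_{1\to l}$ as a block matrix with respect to the decomposition of $D'_0 = \bigoplus_{j\geq 1} D''_j$ into columns of the Young diagram \eqref{eq:YT2} on the source, and $\tV_l = V_l \oplus D'_l$ on the target. Reading off Lemma~\ref{lem:hell2}: $\tGa_1$ sends each $t_{i,j}$ with $j\geq 2$ to $t_{i,j-1}$ (staying inside $D'_1$) and sends each $t_{i,1}$ to $\Gamma_{\ld_i\to 1}(t_i)\in V_1$; each $\tA_m$ does the analogous thing, shifting a box one column to the left if possible and otherwise pushing the leftmost column $t_{i,1}$ into $V_{m+1}$ via $\Gamma_{\ld_i\to m+1}(t_i)$, while acting as $A_m$ on $V_m$. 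Iterating: a basis vector $t_{i,j}$ in the $j$th column stays in the $D'$-part while being shifted left through columns $j-1, j-2,\ldots,1$ during the first $j-1$ of the maps $\tA_0=\tGa_1, \tA_1,\ldots$; at the step where it reaches the first column it is sent into the $V$-part via $\Gamma_{\to t}$ (with $t$ the appropriate index), and then it is propagated by the remaining $A$'s, contributing $A_{t\to l}\Gamma_{\to t}$ in the block indexed by $D''_t$ — here $t$ is the column index $j$, since a box in column $j$ must travel through $j$ of the maps before entering the $V$-direction. The $D'$-part of $\tGa_{1\to l}$ is, as in \eqref{eq:FiKer}, an identity block on $D'_l$ (together with zero), so taking kernels kills the $D'_l$ summand and leaves precisely the kernel of the $V_l$-row, which is the claimed map $D''_1\oplus\cdots\oplus D''_l \to V_l$.

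The main obstacle — really the only non-formal point — is bookkeeping the indices: verifying that a box in column $j$ of \eqref{eq:YT2} is precisely the one whose image appears in the $D''_j$ block with coefficient map $A_{j\to l}\Gamma_{\to j}$, and that no box from column $> l$ survives to contribute (it is still in the $D'_l$ part after $l$ steps, hence lies in the identity block and is killed on passing to the kernel). This is exactly parallel to the two-row bookkeeping behind \eqref{eq:FiKer}, so once the pattern of Lemma~\ref{lem:hell2} is substituted into $\tGa_{1\to l} = \tA_{l-1}\cdots\tA_1\tGa_1$ and the composite is expanded columnwise, the block form follows by the same induction on $l$ used implicitly in Theorem~\ref{thm:Fi}. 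I would close by noting that the identification of $D''_j = \langle t_{i,j} \mid 1\leq i\leq r\rangle$ with the $j$th column of \eqref{eq:YT2} is immediate from the construction, completing the proof.
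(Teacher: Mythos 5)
Your proposal is correct and follows essentially the same route as the paper: reduce to computing $\ker\tGa_{1\to l}$ via Proposition~\ref{prop:MS}, then substitute the explicit formulas of Lemma~\ref{lem:hell2} to see that a box $t_{i,j}$ with $j\leq l$ contributes the block $A_{j\to l}\Gamma_{\to j}$ while boxes with $j>l$ land in an identity block on $D'_l$ that drops out of the kernel. Your columnwise bookkeeping just spells out the details the paper leaves implicit (aside from the harmless slip ``$\tGa_{1\to l}=\tB_1\cdots$'' before you state the correct composite).
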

\proof
It suffices to compute the kernel of $\tGa_{1\to l}$.
By Lemma~\ref{lem:hell2} we have $\tGa_{1\to l}(t_{ij}) = A_{1\to l}\Gamma_{\ld_i \to 1} (t_i)$, and we are done.
\endproof

\ex\label{ex:Fi211}
Let $n=4$. Consider the partition $\ld = (2,1,1)$ with $d_1 = 2, d_2 =1$ and $d_i = 0$ for $i \geq 3$. By \eqref{eq:dv} we know that
\eq
v_1 = 3 - 2d_3 - d_2 = 2,
\quad
v_2 = 2 - d_3 = 2,
\quad
v_3 = 1.
\endeq
Therefore, any geometric point in $M_\ld$ is represented by a quiver representation  as below:
\begin{equation}\label{eq:211-quiver} 
\xymatrix@C=18pt@R=9pt{
\ar@/_/[dd]_<(0.2){\Gamma_1}
D_1 =\<t_2, t_3\>
&
\ar@/_/[dd]_<(0.2){\Gamma_2}
D_2 =\<t_1\>
&   
\\   
& &    
\\ 
V_1 =\CC^2
	\ar@/^/[r]^{A_1}
	\ar@/_/[uu]_>(0.8){\Delta_1} 
	& 
	\ar@/^/[l]^{B_1} 
V_2  =\CC^2
	\ar@/_/[uu]_>(0.8){\Delta_2} 
	\ar@/^/[r]^{A_{2}}
	& 
	\ar@/^/[l]^{B_2}
V_3 = \CC. 
	}
\end{equation} 
Let $[A,B,\Gamma,\Delta] = \tphi_{\ld}\inv(x, F_\bullet)  \in M_{\ld}$.
By Theorem~\ref{thm:Fi2}, the  flag $F_\bullet$ is described by,
\[
\begin{split}
F_1 &=  
 \ker \Big(\begin{blockarray}{ *{8}{c} }
 & \< t_{2,1}, t_{3,1}\>&\<t_{1,1}\>  \\
\begin{block}{ c @{\quad} ( @{\,} *{7}{c} @{\,} )}
V_{1}& 
\Gamma_1 &B_1\Gamma_2
\\
\end{block}
\end{blockarray}\Big)
\normalsize
~~~~~,
\\
F_2 &= 
 \ker\Big( \begin{blockarray}{ *{8}{c} }
 & \< t_{2,1}, t_{3,1}\>&\<t_{1,1}\> & \<t_{1,2}\> \\
\begin{block}{ c @{\quad} ( @{\,} *{7}{c} @{\,} )}
V_{2}& 
A_1\Gamma_1 &A_1B_1\Gamma_2& \Gamma_2
\\
\end{block}
\end{blockarray}
\Big)\normalsize
~~~~~,
\\
F_3 &= 
 \ker \Big(\begin{blockarray}{ *{8}{c} }
 & \< t_{2,1}, t_{3,1}\>&\<t_{1,1}\> & \<t_{1,2}\> \\
\begin{block}{ c @{\quad} ( @{\,} *{7}{c} @{\,} )}
V_{3}& 
A_2A_1\Gamma_1 &A_2A_1B_1\Gamma_2& A_2\Gamma_2
\\
\end{block}
\end{blockarray}
\Big)\normalsize
~~~~~. 
\end{split}
\]
In particular, if 
$
A_1 = \Gamma_1 = I_2,
\Delta_1 = B_1 = B_2 = \Gamma_2 = 0,
A_2 = \Delta_2 = (1 ~1),
$
then
\[
\begin{split}
&F_1 = \ker \begin{pmatrix} 1& 0 & 0 \\ 0&1&0 \end{pmatrix} = \<t_{1,1}\>,
\quad
F_2 = \ker \begin{pmatrix} 1 & 0 &0& 0 \\ 0&1&0&0 \end{pmatrix} = \<t_{1,1}, t_{1,2}\>,
\\
&
F_3 = \ker \begin{pmatrix} 1 & 1 &0&  0  \end{pmatrix} = \<t_{1,1}, t_{1,2}, t_{2,1}-t_{3,1}\>.
\end{split}
\]
Since $\Delta_2$ is nozero, we have computed the corresponding flag for a geometric point in $M_\ld$ that is not in the Lagrangian subvariety corresponding to the Springer fiber $\cB_\ld$. 
\endex
\section{Applications}\label{sec:appl} 
In this section we talk about a few applications of our main theorem (Theorem~\ref{thm:Fi}) in the two-row case. We describe the irreducible components of the Lagrangian subvarieties (of type A), which leads to a characterization of irreducible components of Springer fibers of classical type thanks to the theory of fixed-point subvarieties (or $\imath$quiver varieties).


\subsection{Irreducible components of two-row Springer fibers}\label{sec:cupdiag}
We assume that $\ld = (n-k,k)$ for the rest of article.
A {\em cup diagram} is a non-intersecting arrangement of cups and rays below a horizontal axis connecting a subset of $n$ vertices on the axis, and we identify any cup diagram $a$ with the collection of ordered pairs of endpoints of cups as below:
\eq\label{eq:cup}
a \equiv \{ (i_t, j_t) ~|~ 1\leq t \leq k, 1\leq i_t < j_t \leq n \} 
\quad
\textup{for some}
\quad
k \leq \left\lfloor\frac{n}{2}\right\rfloor.
\endeq
By a cup we mean an ordered pair $(i,j)$ such that $i<j$.
We use a rectangular region that contains all the cups to represent the cup diagram. 
Note that a ray is now a through-strand in this presentation, but we still call it a ray.

The irreducible components of the Springer fiber $\mathcal B_\ld$  can be labeled by the set $I_{\ld}$ of all cup diagrams on $n$ vertices with $k$ cups and $n-2k$ rays.
For example, when $n=3$ and $k=1$ we have
\eq
I_{(2,1)} = \left\{  \cupa~,~ \cupb \right\}.
\endeq
It is known in \cite{Spa76} (cf. \cite{Var79}) that irreducible components of $\cB_{\ld}$ are indexed by $I_{\ld}$.
Denote by $K_a$ the irreducible component in $\mathcal B_{\ld}$ associated to the cup diagram $a\in I_{\ld}$ as in {\em loc. cit}.

Recall from \eqref{def:D'} that $D'_0 \equiv \CC^n$ has a basis $\{e_i, f_j\mid 1\leq i \leq n-k, 1\leq j \leq k\}$.
Let $x \in \textup{End}(\CC^n)$ be linear map determined by the assignments below:
\eq
f_k \mapsto f_{k-1} \mapsto \ldots \mapsto f_1 \mapsto 0,
\quad
e_{n-k} \mapsto e_{n-k-1} \mapsto \ldots \mapsto e_1 \mapsto 0.
\endeq
For a cup $(i ,j) \in a$, we first note that $j$ must be greater than $i$ by the definition of a cup. Moreover, $j-i$ must be an odd number since the non-intersection condition forces that there are even vertices between $i$ and $j$.
Next, 
we denote its corresponding 
{\em cup relation} by
\eq\label{eq:cup_rel}
F_j=x^{-\frac{1}{2}(j-i+1)}F_{i-1},
\endeq
where $x^{-1}$ denotes the preimage of a space under the endomorphism $x$.
For a ray in $a$ connected to vertex $i$, we denote its corresponding {\em ray relation} by  
\eq\label{eq:ray_rel}
F_i=F_{i-1} \oplus \langle e_{\frac{1}{2}(i+\rho(i))}\rangle,
\endeq 
where
 $\rho(i)\in\mathbb Z_{>0}$ is the number of rays to the left of $i$ (including itself). We remark that \eqref{eq:ray_rel} is equivalent to 
$F_i = x^{-\frac{1}{2}(i-\rho(i))}(x^{n-k-\rho(i)} F_n )$.

\begin{proposition}\label{prop:known_results_about_irred_comp}
Let $F_\bullet \in \cB_{\ld}$, and let $a \in I_{\ld}$.
The flag $F_\bullet$ belongs to the irreducible component $K_a\subseteq \mathcal B_\ld$  if and only if
\[
\begin{split}
&\eqref{eq:cup_rel}\textup{ holds for any cup }(i,j) \in a,
\textup{ and}
\\
&\eqref{eq:ray_rel}\textup{ holds for any vertex }i \textup{ connected to a  ray in }a.
\end{split}
\]
\end{proposition}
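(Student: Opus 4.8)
The plan is to obtain this as a translation into our conventions of the known description of the irreducible components of two-row Springer fibers, due to Fung \cite{Fun03} and recast in essentially this combinatorial language by Stroppel--Webster \cite[Proposition~7]{SW12}. The indexing of the components of $\cB_\ld$ by the set $I_\ld$ of cup diagrams, and the component $K_a$ attached to $a\in I_\ld$, have already been fixed above following Spaltenstein \cite{Spa76} (see also Vargas \cite{Var79}); so the only content of the proposition is the explicit system of flag equations \eqref{eq:cup_rel}--\eqref{eq:ray_rel} cutting out $K_a$, together with the assertion that the labeling it induces coincides with the one from \emph{loc.\ cit.}

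First I would record the locus $Z_a\subseteq\cB_\ld$ of flags satisfying \eqref{eq:cup_rel} for every cup and \eqref{eq:ray_rel} for every ray of $a$, and recall the classical iterated $\mathbb{P}^1$-bundle construction (Vargas, Fung) exhibiting $Z_a$ as a tower of $\mathbb{P}^1$-bundles over a point: one peels the cups of $a$ off from innermost to outermost, each contributing one $\mathbb{P}^1$ of choices, while the through-strands (rays) rigidly prescribe the successive one-dimensional jumps $F_i/F_{i-1}$. This shows $Z_a$ is irreducible, closed, and of dimension equal to the number $k$ of cups, which equals $\dim\cB_\ld$; hence $Z_a$ is an irreducible component, and running the construction over all $a\in I_\ld$ exhausts the components, so a comparison with Spaltenstein's labeling forces $Z_a=K_a$. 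The cup relation \eqref{eq:cup_rel} is then literally the one in \cite{Fun03, SW12}, while the ray relation \eqref{eq:ray_rel} is a rewriting of theirs once one checks the combinatorial identity $\tfrac12(i+\rho(i))=c(i)+\rho(i)$ with $c(i)=\tfrac12(i-\rho(i))$, i.e.\ that the basis vector forced into $F_i$ at a ray-vertex $i$ is $e_{\frac12(i+\rho(i))}$.

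The main obstacle is not geometric but a matter of bookkeeping: one must fix a precise dictionary between our drawing of cup diagrams (reading direction, which endpoint is ``$i$'' and which is ``$j$'', and the through-strand convention), our labeling of the Jordan basis $\{e_i,f_j\}$ against the ``$e$''/``$f$'' split in \eqref{def:ef} and the $\fsl_2$-action in \eqref{def:sl2}, and the counting function $\rho$; only with this dictionary in place do \eqref{eq:cup_rel}--\eqref{eq:ray_rel} match \cite[Proposition~7]{SW12} on the nose. I therefore expect to present the proof as this careful reconciliation, invoking \cite{Spa76, Var79} for the bijection and \cite{Fun03, SW12} for the equational description, rather than redeveloping the iterated bundle geometry from scratch.
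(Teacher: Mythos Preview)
Your proposal is correct and takes essentially the same approach as the paper: the paper's own proof is nothing more than a citation to \cite[Theorem~5.2]{Fun03} and \cite[Proposition~7]{SW12}, together with the single remark that \eqref{eq:ray_rel} is equivalent to $F_i = x^{-\frac{1}{2}(i-\rho(i))}(x^{n-k-\rho(i)} F_n)$. Your outline of the iterated $\mathbb{P}^1$-bundle argument and the convention-matching is exactly the content underlying those references, but the paper does not reproduce any of it; you could safely be as terse as the authors are here.
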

\begin{proof}
An equivalent version of this proposition is first proved in \cite[Theorem~5.2]{Fun03}; while we present a simpler statement in  \cite[Proposition~7]{SW12}.
\end{proof}
\ex\label{ex:Ka21}
Let $n=3, k=1$, and let
\[
a_1 = \cupa~,~ a_2 = \cupb~ \in B_{2,1}. 
\]
The cup relation for $(1,2)$ reads $F_2 = x\inv F_0 = x\inv \{0\} = \<e_1, f_1\>$; while the cup relation for $(2,3)$ suggests that $F_1 = x F_3 = x\<e_1, e_2, f_1\> = \<e_1\>$. 
Therefore, the two irreducible components in $\cB_{2,1}$ are  
\[
\begin{split}
&K_{a_1} 
= \{ F_\bullet \in \cB ~|~ x\inv F_0 = F_2, F_3 = F_2 \oplus \< e_2\>\}
= \{(0 \subset F_1 \subset \<e_1, f_1\> \subset \CC^3) ~|~ \dim F_1 = 1\} \simeq \CP^1,
\\
&K_{a_2} = \{F_\bullet \in \cB~|~ F_1 = F_0 \oplus \< e_1\>, x\inv F_1 = F_3\}
= \{(0 \subset \<e_1\> \subset F_2 \subset \CC^3) ~|~ \dim F_2 = 2\}\simeq \CP^1.
\end{split}
\]
We remark that the ray conditions are actually redundant in either irreducible component since each of them is implied by the complementary cup relation as follows:
\begin{align}
&F_3 = x\inv F_1
&\Rightarrow &\quad F_1 = \<e_1\> = F_0 \oplus \<e_1\>,
\\
&F_2 = x\inv F_0 = \<e_1,  f_1\> 
&\Rightarrow &\quad F_3 = F_2 \oplus \<e_2\>.
\end{align}
\endex
\begin{lemma}
Let $F_\bullet$ be a complete flag in $\<e_i, f_j ~|~ 1\leq i \leq n-k, 1\leq j \leq k\>$, and let $a = \{ (i_t, j_t) ~|~1\leq t \leq k \} \in I_{n-k,k}$.
Assume that \eqref{eq:cup_rel} holds for all $(i,j) = (i_t, j_t), 1\leq t \leq k$. Then
\[
\eqref{eq:ray_rel} \textup{ holds for any }i \not\in \{i_t, j_t ~|~ 1\leq t \leq k\}.
\]
\end{lemma}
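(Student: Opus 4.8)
I want to show that if $F_\bullet$ is a complete flag in $\CC^n$ satisfying the cup relations \eqref{eq:cup_rel} for all cups $(i_t,j_t) \in a$, then the ray relation \eqref{eq:ray_rel} is automatically satisfied at every vertex $i$ not attached to any cup. The strategy is to proceed by induction along the line of vertices $1, 2, \ldots, n$, keeping track of what each successive subspace $F_i$ must be once the cup relations are imposed. The key geometric fact is that in a cup diagram, the cups are \emph{nested or disjoint} (non-crossing), so the endpoints $\{i_t, j_t\}$ are arranged as a balanced parenthesization; removing all cup-endpoints from $\{1,\ldots,n\}$ leaves precisely the $n-2k$ ray vertices, and to the left of any ray vertex $i$ the number of cup endpoints is even, say $2c$, so that $\rho(i) = i - 2c$. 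I expect the cleanest bookkeeping to come from showing, by induction on $i$, that $F_i$ is forced to be a specific coordinate-type subspace built from the basis $\{e_h, f_h\}$ and that this forced value has the right form.

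\medskip

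\noindent\textbf{Key steps.} First I would set up the induction hypothesis: for each $i$, once the cup relations among endpoints $\leq i$ are imposed, the space $F_i$ is determined and can be written explicitly as a direct sum $W_i^{e} \oplus W_i^{f}$ where $W_i^{e} = \langle e_1, \ldots, e_{p_i}\rangle$ and $W_i^{f} = \langle f_1, \ldots, f_{q_i}\rangle$ for integers $p_i, q_i$ with $p_i + q_i = i$ — \emph{whenever} $i$ is either a ray vertex or the right endpoint $j_t$ of a cup. (At left-endpoint vertices $i_t$ the space $F_{i_t}$ is only partially constrained, but it need not concern us there.) Second, I would verify the base step: the first structural vertex is handled by noting $F_0 = 0$ and applying either the first cup or first ray relation. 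Third, for the inductive step at a ray vertex $i$: by non-crossing, the nearest structural vertex $i'$ to the left with $F_{i'}$ of the stated coordinate form is either $i-1$ itself (if $i-1$ is a ray vertex or a closed cup's right endpoint) or the right endpoint of a cup nested entirely in $(i', i)$; in all cases one checks that between $i'$ and $i$ the flag gains exactly one basis vector, and a dimension/action-of-$x$ count using $x^{n-k-\rho(i)}F_n = \langle e_1, \ldots, e_{\rho(i)}, f_1, \ldots, f_?\rangle$ pins down that the new vector is $e_{(i+\rho(i))/2}$, which is exactly \eqref{eq:ray_rel}. The crucial counting input is that $\rho(i)$ equals the number of $e$-generators accumulated so far, which follows because each \emph{completed} cup $(i_t,j_t)$ with $j_t < i$ contributes (via \eqref{eq:cup_rel}, i.e. $F_{j_t} = x^{-(j_t-i_t+1)/2}F_{i_t-1}$) one extra $e$-generator and one extra $f$-generator, so that the surplus of $e$'s over the "diagonal" count is governed by the unmatched ray vertices to the left.

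\medskip

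\noindent\textbf{The main obstacle.} The technical heart is the inductive step across a \emph{nested block} of cups: when vertex $i$ is a ray but $i-1$ is the right endpoint $j_t$ of a cup whose left endpoint $i_t$ is itself far to the left, I must argue that $F_{i-1}$ has the claimed coordinate form. This requires unwinding the chain of cup relations $F_{j_t} = x^{-\frac{1}{2}(j_t - i_t + 1)} F_{i_t - 1}$ together with whatever structure was forced at $F_{i_t - 1}$, and showing the preimage operation $x^{-1}$ applied to a coordinate subspace of the form $\langle e_{\leq p}, f_{\leq q}\rangle$ again yields such a subspace with $p, q$ each incremented appropriately — i.e. $x^{-1}\langle e_1, \ldots, e_p, f_1, \ldots, f_q\rangle = \langle e_1, \ldots, e_{p+1}, f_1, \ldots, f_{q+1}\rangle$ when $p < n-k$ and $q < k$ (with obvious modifications at the boundary). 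This uses the explicit action of $x$ on the basis, and the fact that a completed cup must fit dimensionally inside the available $e$'s and $f$'s — which is where the hypothesis $a \in I_{n-k,k}$ (so exactly $k$ cups, forcing each nested family to be consistent with the two-block Jordan structure) gets used. Once this "$x^{-1}$ of a coordinate subspace is a coordinate subspace" lemma is in hand, the rest is a routine index chase matching $\rho(i)$ to the $e$-count.
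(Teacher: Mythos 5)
Your overall strategy --- walk along the diagram and show that the cup relations force $F_j$ to be a coordinate subspace $\<e_1,\dots,e_p,f_1,\dots,f_q\>$ at the distinguished vertices --- is the same idea as the paper's argument, but the paper organizes the induction on $n$ by peeling off the \emph{rightmost} cup or ray, whereas you run it left to right. That difference is not cosmetic: your version has a genuine gap at the ray step, exactly where the right-to-left organization does real work.

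Concretely, let $i$ be a ray vertex with $c=c(i)<k$ cups to its left, so at least one cup lies entirely to the right of $i$. Your induction gives $F_{i-1}=\<e_1,\dots,e_{p},f_1,\dots,f_c\>$ with $p=\tfrac{i+\rho(i)}{2}-1$, and the only constraints available from the left are $F_{i-1}\subset F_i$ and (tacitly, via $F_\bullet\in\cB_\ld$) $xF_i\subseteq F_{i-1}$; these yield only $F_i=F_{i-1}\oplus\<\alpha e_{p+1}+\beta f_{c+1}\>$ for some $(\alpha:\beta)$, and nothing in your proposal rules out $\beta\neq 0$. The missing input comes from the cups to the \emph{right} of $i$: for a cup $(i_t,j_t)$ of size $\delta$, the requirement that $x^{-\delta}F_{i_t-1}$ have dimension $j_t=i_t-1+2\delta$ forces $F_{i_t-1}\subseteq\Im x^{\delta}$, and chaining this over all outermost cups to the right of $i$ gives $F_i\subseteq\Im x^{\,k-c}=\<e_1,\dots,e_{n-2k+c},f_1,\dots,f_c\>$, which is what excludes $f_{c+1}$. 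Example~\ref{ex:Ka21} already illustrates the phenomenon: for the diagram with a ray at $1$ and a cup $(2,3)$, the relation $F_1=\<e_1\>$ is forced by the cup relation at $(2,3)$, not by anything to the left of vertex $1$. Your appeal to ``a dimension/action-of-$x$ count using $x^{n-k-\rho(i)}F_n$'' merely restates the target formula for the ray relation; it does not derive this rightward constraint, which is the actual crux. Two smaller points: (i) your induction hypothesis is false as stated at right endpoints of \emph{nested} cups (take $n=4$ with cups $(1,4),(2,3)$: then $F_3=x^{-1}F_1$ with $F_1$ an arbitrary line in $\<e_1,f_1\>$, so $F_3$ need not be a coordinate subspace); you must restrict the hypothesis to vertices not lying strictly inside any cup, which suffices because a ray vertex and its left neighbour are never nested inside a cup. (ii) Both you and the paper silently use $xF_j\subseteq F_{j-1}$, i.e.\ $F_\bullet\in\cB_\ld$; without this the statement fails for rays lying to the right of all cups, where no cup relation constrains the flag at all.
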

\proof
We use an induction on $n$. It is trivial when $n\leq 2$; while the case $n = 3$ is proved in Example~\ref{ex:Ka21}.
Assume now $n \geq 4$. Note that the vertex $n$ is either connected to a ray or a cup.

If vertex $n$ is connected by a cup to vertex $m$, then $m = n +1 - 2c$ for some non-negative integer $c$ we call the size of this cup, and hence the cup relation reads
\eq
F_{m-1} = x^{\frac{1}{2}(n-m+1)}F_n
= \< e_1, \ldots, e_{n-k-c}, f_1, \ldots, f_{k-c}\>.
\endeq 
Therefore, the induction hypothesis holds for the subdiagram of $a$ on the first $n-2c$ vertices, and hence all the ray relations hold.

If vertex $n$ is connected to a ray. Let vertex $m$ be the rightmost vertex that is connected to a cup.
Since all $k$ cup relations hold, $F_m$ must contain the subspace $\<f_1, \ldots, f_k\>$, and hence by dimension reason $F_m = \<e_1, \ldots, e_{m-k}, f_1, \ldots, f_k\>$.
Now $F_{m+1}$ is a $m+1$-dimensional space such that   
\eq
x F_{m+1} \subseteq
 \<e_1, \ldots, e_{m-k}, f_1, \ldots, f_k\>
 \subset
F_{m+1} \subset
\<e_1, \ldots, e_{n-k}, f_1, \ldots, f_k\>.
\endeq
It must be the case that $F_{m+1} =  \<e_1, \ldots, e_{m-k+1}, f_1, \ldots, f_k\>$, i.e., the ray relation holds for $m+1$.
By a similar argument, one shows that the ray relation holds for vertices $m+2$, $m+3$,$\ldots$ and $n$.
Note that the ray relation for any vertex $i \not\in\{i_t, j_t\}$ with $i < m$ also follows from the induction hypothesis for the subdiagram of $a$ on the first $m$ vertices.
\endproof
\subsection{Irreducible components via quiver representations} \label{sec:irred}
For each cup $a\in I_{\ld}$, our strategy to single out the irreducible component $K_a \subset \cB_\ld \subset \tcS_{\ld}$ requires the following ingredients:
\begin{enumerate}
\item construction of a subset $\fT^a \subset \fT^+_{\ld}$ such that ${p}_{\bar{1}}(\fT^a) \simeq K_a$.
\item construction of a subset $\Lambda^a \subset \Lambda^+_{\ld}$ so that $\Phi(\Lambda^a) \simeq \fT^a$, which implies
$\Phi_M (p_\ld(\Lambda^a)) \simeq {p}_{\bar{1}}(\fT^a) \simeq K_a$.
\end{enumerate}
In light of \eqref{def:tphi}, we have
\eq\label{def:tphi2}
\begin{tikzcd}
\Lambda^+_{\bar{1}}
\ar[r, "{p_{\bar{1}}}", twoheadrightarrow] 
& 
M_{\bar{1}}
\ar[r, "{\tphi}", "\sim"'] 
&
\tcS_{\bar{1}}
\\
\fT^a
\ar[r, twoheadrightarrow] \ar[u, hookrightarrow]
& 
p_{\bar{1}}(\fT^a)
\ar[r, "\sim"', "\textup{Prop. }\ref{prop:a1}"] \ar[u, hookrightarrow]
&
K_a.
\ar[u, hookrightarrow]
\\
\Lambda^a
\ar[r, "p_\ld", twoheadrightarrow]
\ar[u, "\sim"', "\textup{Prop. }\ref{prop:a2}"]
&
p_\ld(\Lambda^a)
\ar[u, "\Phi_M", "\sim"']
&
\end{tikzcd}
\endeq
Recall that a cup diagram is uniquely determined by the configuration of its cups; that is, once the placement of the cups has been decided, rays emanate from the rest of the nodes.
Hence, for our construction of $\fT^a$ and $\Lambda^a$, we use only the information about the cups.
For completeness we also give a characterization for the ray relation on the quiver representation side.

Given a cup diagram $a = \{(i_t, j_t)~|~1\leq t \leq k\} \in I_{\ld}$,
we denote the set of all vertices connected to the left (resp.\ right) endpoint of a cup in $a$ by
\eq
V_l^a = \{i_t \mid 1\leq t \leq k\},
\quad
V_r^a = \{j_t \mid 1\leq t \leq k\}.
\endeq
Define the endpoint-swapping map by
\eq
\sigma: V_l^a\to V_r^a,
\quad
i_t \mapsto j_t.
\endeq
Given $i\in V_l^a$, denote the ``size'' of the cup $(i, \sigma(i))$ by
\eq
\delta(i)=\frac{1}{2}(\sigma(i)-i+1). 
\endeq
For instance, a minimal cup connecting neighboring vertices has size $\delta(i) = \frac{(i+1) -i + 1}{2} = 1$, and a cup containing a single minimal cup nested inside has size $2$.      

For short we set, for $0\leq p<q \leq n$,
\eq\label{eq:to}
\tB_{q\to p}
=
\tB_{p}\tB_{p+1}\cdots \tB_{q-1}: \tV_q \to \tV_p
,
\quad
\tA_{p \to q}
=
\tA_{q-1}\tA_{q-2}\cdots \tA_{p}: \tV_p \to \tV_q.
\endeq
Now we define
\eq\label{defi:M_n^a}
\fT^a
=\{(\tA,\tB,\tGa,\tDe)\in \fT^+_{\ld}
\mid
\ker \tB_{i+\delta(i)-1 \to i-1} 
= \ker \tA_{\sigma(i)-\delta(i) \to \sigma(i)} 
\tforall i\in V_l^a
\}.
\endeq 
The kernel condition in \eqref{defi:M_n^a} can be visualized in Figure~\ref{fig:Ma} below:

\begin{figure}[ht!]
\caption{The paths in the kernel condition of \eqref{defi:M_n^a}.}
\label{fig:Ma}
\[ 
\xymatrix@C=25pt@R=9pt{
	&   
	& 
	&
	\ar@{=}[d]
{\tV_{\sigma(i)-\delta(i)}}  
	\ar@/^/[r]^{\tA_{\sigma(i)-\delta(i)}}
	& 
{\tV_{i+\delta(i)}}  
	\ar@/^/[r]^{\tA_{\sigma(i)-\delta(i)+1}}
	&
	\cdots
   \ar@/^/[r]^{\tA_{\sigma(i)-1}}
	& 
{\tV_{\sigma(i)}.} 
\\
{\tV_{i-1}}  
	&   
	\ar@/^/[l]^{\tB_{i-1}}
{\tV_{i}}  
	& 
	\ar@/^/[l]^{\tB_{i}}
	\cdots
	&
	\ar@/^/[l]^{\tB_{i+\delta(i)-2}}
{\tV_{i+\delta(i)-1}}  
	& 
	&
	& 
}
\] 
\end{figure}

Note that for a minimal cup connecting neighboring vertices $i$ and $i+1$ the relations in \eqref{defi:M_n^a} take the simple form 
\eq
\begin{split}
\ker \tDe_1 = \ker \tA_1 
&\quad\tif i =1;
\\
\ker \tB_{i-1} = \ker \tA_i 
&\quad\tif 2 \leq i \leq n-2;
\\
\ker \tB_{n-2} = \tV_{n-1} = \CC 
&\quad\tif i=n-1.
\end{split}
\endeq 
We now show that the cup relation $F_j = x^{-\frac{1}{2}(j-i+1)}F_{i-1}$ on the flag side translates to the kernel condition $\ker \tB_{i+\delta(i)-1 \to i-1} 
= \ker \tA_{j-\delta(i) \to j}$ regarding transversal quiver representations.
\begin{prop}\label{prop:a1}
For $a \in B_{\ld}$,
we have an equality
$
p_{\bar{1}}(\fT^a) =  \tphi\inv(K_a).
$
\end{prop}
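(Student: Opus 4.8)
The plan is to first reduce the asserted set equality to a pointwise statement, then to translate the cup relations on flags into the kernel conditions defining $\fT^a$ by a short telescoping computation, and finally to deal with the Jordan-type bookkeeping. Since $\tphi=\tphi(\td,\tv)\colon M_{\bar{1}}\xrightarrow{\sim}\tcS_{\bar{1}}$ is an isomorphism, $\fT^a$ is visibly $\GL(V)$-stable, and every point of $\tphi\inv(K_a)$ has a representative in $\fT^+_\ld$ (since $\tphi\circ p_{\bar{1}}$ maps $\fT^+_\ld$ onto $\tcS_\ld\supseteq K_a$ by \eqref{def:tphi}), it suffices to prove that for each $(\tA,\tB,\tGa,\tDe)\in\fT^+_\ld$ --- writing $u=\tDe_1\tGa_1$ and $F_l=\ker\tGa_{1\to l}$, so that $\tphi\bigl(p_{\bar{1}}(\tA,\tB,\tGa,\tDe)\bigr)=(u,F_\bullet)$ by Proposition~\ref{prop:MS} --- one has $(\tA,\tB,\tGa,\tDe)\in\fT^a$ if and only if $(u,F_\bullet)\in K_a$.

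The key computation exploits that in $\widetilde{R}_\ld$ only the vertex $1$ carries a framing, so \eqref{eq:M2} reduces to $\tA_i\tB_i=\tB_{i+1}\tA_{i+1}$ for $i\ge1$ together with $\tGa_1\tDe_1=\tB_1\tA_1$ (up to a harmless scalar, irrelevant since only kernels enter). Moving a factor $\tB_1\tA_1$ through the chain of $\tA$'s one step at a time gives, by induction on $m$, the identity $\tA_{1\to l}(\tB_1\tA_1)^m=\tB_{l+m\to l}\tA_{1\to l+m}$; since $\tGa_1 u^m=(\tGa_1\tDe_1)^m\tGa_1$ and $\tGa_{1\to l}=\tA_{1\to l}\tGa_1$, this yields
\eq\label{eq:preim_plan}
u^{-m}F_l=\ker\bigl(\tGa_{1\to l}\,u^m\bigr)=\ker\bigl(\tB_{l+m\to l}\,\tGa_{1\to l+m}\bigr),\qquad 0\le l\le l+m\le n.
\endeq
Because $(u,F_\bullet)\in\tcN$, the flag $F_\bullet$ is complete, so $\tGa_{1\to p}\colon\tD_1\to\tV_p$ has kernel $F_p$ of dimension $p$ and is therefore surjective for $1\le p\le n-1$. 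For a cup $(i,j)\in a$ set $\delta=\delta(i)=\tfrac12(j-i+1)$ and $p=\tfrac12(i+j-1)=i+\delta-1=j-\delta$, and factor $\tGa_{1\to j}=\tA_{p\to j}\,\tGa_{1\to p}$; then \eqref{eq:preim_plan} with $(l,m)=(i-1,\delta)$ turns the cup relation $F_j=u^{-\delta}F_{i-1}$ into $\ker(\tA_{p\to j}\,\tGa_{1\to p})=\ker(\tB_{p\to i-1}\,\tGa_{1\to p})$, and surjectivity of $\tGa_{1\to p}$ makes this equivalent to $\ker\tA_{\sigma(i)-\delta(i)\to\sigma(i)}=\ker\tB_{i+\delta(i)-1\to i-1}$, which is exactly the relation of \eqref{defi:M_n^a} attached to this cup. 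Ranging over the $k$ cups of $a$, I conclude that $(\tA,\tB,\tGa,\tDe)\in\fT^a$ if and only if $F_\bullet$ satisfies \eqref{eq:cup_rel} for every cup of $a$, with $x$ replaced by $u$.

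It then remains to identify $u$ with the fixed nilpotent $x$ of type $\ld$ whenever all these cup relations hold. Transversality \eqref{eq:M1a} at $i=0$ gives $u\in\cS_x$, hence $\dim\ker u^m\le\dim\ker x^m$ for all $m$; conversely each relation $F_{\sigma(i)}=u^{-\delta(i)}F_{i-1}$ forces $\dim\bigl(\ker u^{\delta(i)}\cap F_{\sigma(i)}\bigr)=\sigma(i)-\dim\bigl(F_{i-1}\cap\Im u^{\delta(i)}\bigr)\ge 2\delta(i)$, and I would combine these inequalities over the $k$ non-crossing cups --- by an induction on $n$ peeling off the cup/ray at the rightmost vertex, exactly as in the proof of the Lemma of Section~\ref{sec:cupdiag} --- to obtain $\dim\ker u^m\ge\dim\ker x^m$ for all $m$. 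Then $u$ has Jordan type $\ld$, and since $\cS_x$ meets the orbit of $x$ only in $\{x\}$ we get $u=x$. With $u=x$ the relations above become literally \eqref{eq:cup_rel}, so the ray relations \eqref{eq:ray_rel} hold automatically (Lemma of Section~\ref{sec:cupdiag}) and $F_\bullet\in K_a$ by Proposition~\ref{prop:known_results_about_irred_comp}; thus $(u,F_\bullet)\in K_a$. The converse is immediate: if $(u,F_\bullet)\in K_a$ then $u=x$ (as $K_a\subseteq\cB_x$) and $F_\bullet$ satisfies \eqref{eq:cup_rel} by Proposition~\ref{prop:known_results_about_irred_comp}, so $(\tA,\tB,\tGa,\tDe)\in\fT^a$ by the previous paragraph.

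The telescoping identity and the surjectivity argument are routine; the main obstacle is the last step --- showing that the $k$ cup-relation dimension inequalities genuinely pin down $\dim\ker u^m$ for every $m$ and hence force $u=x$. If that dominance-order squeeze proves unwieldy, the fallback I would use is to show directly that $\fT^a$ lies inside the image under $\Phi$ of the locus $\{\Delta=0\}$ from Corollary~\ref{cor:Spr}, by rerunning the stability/admissibility induction there with the kernel conditions of \eqref{defi:M_n^a} in place of the Lagrangian hypothesis; this again reduces precisely to the assertion that $u=\tDe_1\tGa_1$ is the standard nilpotent.
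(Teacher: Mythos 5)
The heart of your argument is identical to the paper's proof: you telescope $(\tDe_1\tGa_1)^{\delta(i)}$ through the chain of $\tA$'s using the admissibility relations $\tB_1\tA_1=\tGa_1\tDe_1$ and $\tA_{t-1}\tB_{t-1}=\tB_t\tA_t$ to rewrite $(\tDe_1\tGa_1)^{-\delta(i)}\ker\tGa_{1\to i-1}$ as $\ker\bigl(\tB_{i+\delta(i)-1\to i-1}\,\tGa_{1\to i+\delta(i)-1}\bigr)$, and then cancel the surjection $\tGa_{1\to i+\delta(i)-1}$ (surjectivity coming from the stability conditions \eqref{eq:M3}) to land exactly on the kernel condition of \eqref{defi:M_n^a}. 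That is precisely the computation in the paper, including the reduction to a pointwise statement via Proposition~\ref{prop:MS}.

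The one place you diverge is that you insist on identifying $u=\tDe_1\tGa_1$ with the fixed nilpotent $x$ of Jordan type $\ld$ before Proposition~\ref{prop:known_results_about_irred_comp} (which presupposes $F_\bullet\in\cB_\ld=\cB_x$) may be invoked. This is a legitimate concern that the paper's proof passes over in silence: a general point of $\fT^+_\ld$ maps to a pair $(u,F_\bullet)\in\tcS_\ld$ with $u$ only known to lie in $\cS_x$, and already for $n=2$, $\ld=(1,1)$ the locus $\fT^+_\ld$ contains points with $\tDe_1\tGa_1\neq 0=x$, which the kernel condition must (and does) exclude. However, your resolution of this point is only a plan: the dominance squeeze is stated in the conditional (``I would combine these inequalities\dots''), and you yourself flag it as the main obstacle. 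It does close in the two-row case --- since $u\in\cS_x\cap\cN$ one has $x\in\overline{G\cdot u}$, so $u$ has at most two Jordan blocks, say of type $(n-j,j)$ with $j\le k$, whence $\dim\ker u^{k}=k+j$; an induction over the non-crossing cups ordered by right endpoint gives $\dim(\ker u^{t}\cap F_{j_t})\ge 2t$, forcing $j=k$, and transversality $\cS_x\cap G\cdot x=\{x\}$ then gives $u=x$ --- but as written this step is a sketch, not a proof. In sum: same method as the paper for the main equivalence, plus an honest but unfinished attempt to patch a subtlety that the published argument does not even acknowledge.
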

\proof
Thanks to Proposition~\ref{prop:MS}, it suffices to show that, for any $i\in V_l^a$ and
$(\tA, \tB, \tGa, \tDe) \in \fT^a$,
the kernel condition 
\eq\label{eq:cup_condition_quiver_side_rewritten} 
\ker \tB_{i+\delta(i)-1 \to i-1} 
= \ker \tA_{\sigma(i)-\delta(i) \to \sigma(i)}
\endeq 
is equivalent to the Fung/Stroppel--Webster cup relation (see Proposition~\ref{prop:known_results_about_irred_comp}~(b)(i))
\begin{equation}\label{eq:equivalent_condition}
(\tDe_1\tGa_1)^{-\delta(i)}
\ker \tGa_{1\to i-1}
=\ker \tGa_{1 \to \sigma(i)}.
\end{equation}
Note that the left-hand side of \eqref{eq:equivalent_condition} can be rewritten as follows:
\begin{align*}
(\tDe_1\tGa_1)^{-\delta(i)}
(\ker \tGa_{1 \to i-1}) 
	&= \ker (\tGa_{1 \to i-1} (\tDe_1\tGa_1)^{\delta(i)}) 
	\\
	&= \ker (\tA_{1 \to i-1} (\tB_1\tA_1)^{\delta(i)}  \tGa_1)	
	\\
	&= \ker (\tB_{i+\delta(i)-1 \to i-1}\tGa_{1\to i+\delta(i)-1}),
\end{align*}
where the second equality follows from applying $\delta(i)$ times the admissibility condition $\tB_1\tA_1=\tGa_1\tDe_1$;
while the third equality follows from applying
the admissibility condition $\tA_{t-1}\tB_{t-1} = \tB_t\tA_t$ repeatedly from $t=2$ to $t=i+\delta(i)-1 = \sigma(i)-\delta(i)$. 
Therefore, the cup relation 
\eqref{eq:equivalent_condition}
is equivalent to another kernel condition below
\eq\label{eq:final_equivalent_condition}
\ker \tB_{i+\delta(i)-1 \to i-1}\tGa_{1\to i+\delta(i)-1}
=\ker \tGa_{1 \to \sigma(i)}.
\endeq
In particular, the kernels are equal for the two maps in Figure~\ref{fig:cupeq} given by dashed and solid arrows, respectively:

\begin{figure}[ht!]
\caption{The kernel condition that is equivalent to \eqref{eq:equivalent_condition}.}
\label{fig:cupeq}
\[ 
\xymatrix@C=25pt@R=25pt{
{\tD_{i-1}}  
	\ar@/^/[d]^{\tGa_1}
	\ar@/_/@{.>}[d]_{\tGa_1}
\\
{\tV_{1}}
	\ar@/^/[r]^{\tA_{1}}
	\ar@/_/@{.>}[r]_{\tA_{1}}  
&
{\tV_{2}}
	\ar@/^/[r]^{\tA_{2}}
	\ar@/_/@{.>}[r]_{\tA_{2}}  
&
\dots  
	\ar@/^/[r]
	\ar@/_/@{.>}[r]  
	&
	\ar@{=}[d]
{\tV_{\sigma(i)-\delta(i)}}  
	\ar@/^/[r]^{\tA_{\sigma(i)-\delta(i)}}
	& 
	\cdots
   \ar@/^/[r]^{\tA_{\sigma(i)-1}}
	& 
{\tV_{\sigma(i)}} 
\\
&
{\tV_{i-1}}  
	&
	\ar@/^/@{.>}[l]^{\tB_{i}}   
	\cdots
	&
	\ar@/^/@{.>}[l]^{\tB_{i+\delta(i)-2}}
{\tV_{i+\delta(i)-1}}  
	& 
	& 
}
\] 
\end{figure}
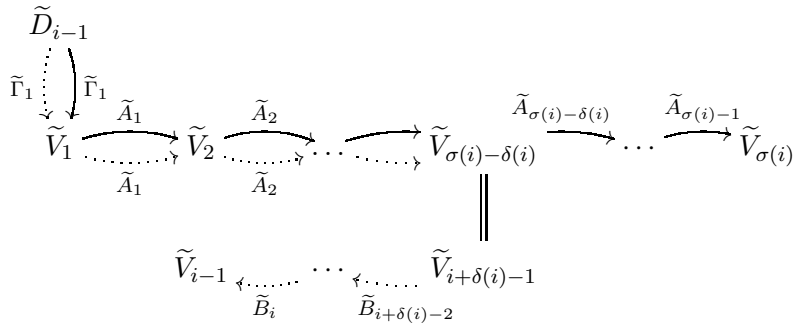

Note that~\eqref{eq:cup_condition_quiver_side_rewritten} evidently implies~\eqref{eq:final_equivalent_condition}. 
By the stability conditions on $\tV_1, \ldots, \tV_{i+\delta(i) -1}$ we see that the maps $\tGa_1, \tA_1, \ldots, \tA_{i+\delta(i)-2}$ are all surjective, and so is its composition $\tGa_{1\to i+\delta(i)-1}$. 
Thus, \eqref{eq:final_equivalent_condition} implies~\eqref{eq:cup_condition_quiver_side_rewritten}, and we are done.
\endproof

\subsection{Irreducible components of Lagrangian subvarieties} \label{sec:IrrLagr}
For any partition $\ld$, let $\cL_\ld = \tphi\inv_\ld(\cB_{\ld})$ be Lusztig's Lagrangian subvariety of $M_\ld$.
Let $\Lambda^\cB_{\ld}$ be the subset of $\Lambda^+_{\ld}$ so that
\eq
p_{\ld}(\Lambda^\cB_{\ld}) = \cL_\ld.
\endeq

The following characterization of the quiver representations corresponding to Springer fibers is well-known. See \cite[Lemma~5.9]{Na94} or \cite[Lemma~2.22]{Lu98}.

\begin{prop}\label{prop:Spr}
If $x\in \cN$ is of Jordan type $\ld$ and $\lambda$ is an arbitrary partition of $n$, then
\[
\Lambda^\cB_{\ld} = \{(A,B,\Gamma,\Delta) \in \Lambda^+_{\ld}\mid\Delta = 0 \}.
\]
\end{prop}

Now we can characterize the irreducible components using Lemma~\ref{lem:hell}.
Given a cup diagram $a\in I_{\ld}$, define 
\eq\label{defi:Springer_component_in_quiver}
\Lambda^a
=\{
(A,B,\Gamma,\Delta) \in \Lambda^\cB_{\ld}
\mid
\ker B_{i+\delta(i)-1 \to i-1} 
= \ker A_{\sigma(i)-\delta(i) \to \sigma(i)} 
\tforall i\in V_l^a
\},
\endeq
where the maps $A_{i\to j}, B_{j \to i}$ are defined similarly as their tilde versions in \eqref{eq:to}.
\begin{prop}\label{prop:hellA}
\label{prop:a2}
For $a \in I_{\ld}$,
we have an equality
$
\Phi(\Lambda^a) = \fT^a.
$
\end{prop}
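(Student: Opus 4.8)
The plan is to use that $\Phi\colon\Lambda^+_{\ld}\overset{\sim}{\to}\fT^+_{\ld}$ is already a $\GL(V)$-equivariant isomorphism (Proposition~\ref{prop:Phi}), so the assertion is equivalent to the set equality $\Lambda^a=\Phi\inv(\fT^a)$. First I would check that $\Phi\inv(\fT^a)\subseteq\Lambda^\cB_{\ld}$: if $(A,B,\Gamma,\Delta)\in\Phi\inv(\fT^a)$, then Proposition~\ref{prop:a1} together with the commuting diagram \eqref{def:tphi} places $p_{\ld}(A,B,\Gamma,\Delta)$ in $\tphi_{\ld}\inv(K_a)\subseteq\tphi_{\ld}\inv(\cB_{\ld})=\cL_{\ld}=p_{\ld}(\Lambda^\cB_{\ld})$, and since $\{\Delta=0\}$ is $\GL(V)$-stable, Corollary~\ref{cor:Spr} forces $\Delta=0$. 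As $\Lambda^a\subseteq\Lambda^\cB_{\ld}$ by definition, it now suffices to fix $(A,B,\Gamma,0)\in\Lambda^\cB_{\ld}$, put $(\tA,\tB,\tGa,\tDe)=\Phi(A,B,\Gamma,0)$, and compare the two families of kernel conditions cup by cup, using that by Lemma~\ref{lem:hell} (or Lemma~\ref{lem:hell2}) all $\Delta$-blocks of $\tA_\ell,\tB_\ell$ vanish.

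Fix $i\in V_l^a$ and set $m=i+\delta(i)-1=\sigma(i)-\delta(i)$. With $\Delta=0$, Lemma~\ref{lem:hell} reduces $\tB_\ell$ to $(v,d)\mapsto(B_\ell v,\,\iota_\ell d)$ for the inclusion $\iota_\ell\colon D'_{\ell+1}\hookrightarrow D'_{\ell}$, and reduces $\tA_\ell$ to $(v,d_1,d_2)\mapsto(A_\ell v+\Gamma_{\to\ell+1}d_1,\,I d_2)$ with $I$ an isomorphism onto $D'_{\ell+1}$. Multiplying out, $\tB_{m\to i-1}$ acts as $B_{m\to i-1}$ on $V_m$ and as an injection $D'_m\hookrightarrow D'_{i-1}$, so $\ker\tB_{m\to i-1}=\ker B_{m\to i-1}\oplus 0\subseteq V_m\oplus D'_m=\tV_m$; and the restriction of $\tA_{m\to\sigma(i)}$ to $V_m$ is $A_{m\to\sigma(i)}$, so $\ker\tA_{m\to\sigma(i)}\cap V_m=\ker A_{m\to\sigma(i)}\oplus 0$. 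Moreover each $\tA_\ell$ is surjective (its first row of blocks has full rank, by the stability condition, as in the proof of Lemma~\ref{lem:hell}), hence $\tA_{m\to\sigma(i)}$ is surjective and $\dim\ker\tA_{m\to\sigma(i)}=(n-m)-(n-\sigma(i))=\delta(i)$.

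These three facts give one inclusion at once. If $(\tA,\tB,\tGa,\tDe)\in\fT^a$, i.e.\ $\ker\tB_{m\to i-1}=\ker\tA_{m\to\sigma(i)}$, then the left side lies in $V_m$, so $\ker\tA_{m\to\sigma(i)}=\ker\tA_{m\to\sigma(i)}\cap V_m=\ker A_{m\to\sigma(i)}$, and comparison yields $\ker A_{m\to\sigma(i)}=\ker B_{m\to i-1}$; thus $\Phi\inv(\fT^a)\subseteq\Lambda^a$, that is $\fT^a\subseteq\Phi(\Lambda^a)$. For the reverse inclusion, take $(A,B,\Gamma,0)\in\Lambda^a$, so $\ker A_{m\to\sigma(i)}=\ker B_{m\to i-1}$ for every $i\in V_l^a$; by the identities just listed this says $\ker\tA_{m\to\sigma(i)}\cap V_m=\ker\tB_{m\to i-1}$, and since $\ker\tA_{m\to\sigma(i)}$ is $\delta(i)$-dimensional and contains $\ker\tB_{m\to i-1}$ it is enough to know that $\dim\ker\tB_{m\to i-1}=\delta(i)$: this forces $\ker\tA_{m\to\sigma(i)}\subseteq V_m$, hence $\ker\tA_{m\to\sigma(i)}=\ker\tB_{m\to i-1}$ and $(\tA,\tB,\tGa,\tDe)\in\fT^a$.

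The one substantive point is therefore the dimension equality $\dim\ker\tB_{m\to i-1}=\delta(i)$ under the hypotheses defining $\Lambda^a$; it does \emph{not} hold on all of $\Lambda^\cB_{\ld}$ (already for $\ld=(3,1)$ the map $\tB_1$ need not drop rank), which is the same subtlety recorded in the remark following Example~\ref{ex:S31}, namely that $\ker\tA_{m\to\sigma(i)}$ is strictly larger than $\ker A_{m\to\sigma(i)}$ in general. To establish it I would use the identity $\tB_{m\to i-1}\,\tGa_{1\to m}=\tGa_{1\to i-1}\,(\tDe_1\tGa_1)^{\delta(i)}=\tGa_{1\to i-1}\,x^{\delta(i)}$ isolated in the proof of Proposition~\ref{prop:a1}, together with the surjectivity of $\tGa_{1\to m}$, to express $\dim\ker\tB_{m\to i-1}$ in terms of $\dim\ker x^{\delta(i)}$ and $\dim\bigl(D'_{\delta(i)}\cap F_{i-1}\bigr)$, where $F_\bullet$ is the flag of $(A,B,\Gamma,0)$ from Theorem~\ref{thm:Fi}; one then inputs the shape of $F_{i-1}$ forced by the cup relations on the vertices to the left of $i$---available by induction on the nesting of cups, and on these the admissibility and stability identities propagate---reducing everything to a count in the Young diagram of $\ld$. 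An equivalent route, avoiding the direct comparison of $\tA,\tB$ with $A,B$, is to rephrase (via Theorem~\ref{thm:Fi} and Proposition~\ref{prop:a1}) $\Phi\inv(\fT^a)$ as the locus of $(A,B,\Gamma,0)\in\Lambda^\cB_{\ld}$ whose flag satisfies all cup relations of $a$, and then prove inductively that those flag relations are equivalent to the kernel relations $\ker B_{m\to i-1}=\ker A_{m\to\sigma(i)}$.
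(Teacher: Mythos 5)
Your reduction is set up correctly, and in places more carefully than the paper's own write-up: checking that $\Phi\inv(\fT^a)\subseteq\Lambda^\cB_{\ld}$ via Proposition~\ref{prop:a1} and Corollary~\ref{cor:Spr}, the block computation giving $\ker\tB_{m\to i-1}=\ker B_{m\to i-1}\subseteq V_m$ and $\ker\tA_{m\to\sigma(i)}\cap V_m=\ker A_{m\to\sigma(i)}$, and the deduction of $\fT^a\subseteq\Phi(\Lambda^a)$ from these facts all match the paper and are sound. Repackaging the remaining inclusion as the single dimension equality $\dim\ker B_{m\to i-1}=\delta(i)$ (using surjectivity of the $\tA_\ell$, which follows from stability) is a clean reformulation.

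The gap is that this dimension equality \emph{is} the hard content of the proposition, and you assert it rather than prove it. It is genuinely nontrivial: already for $\ld=(2,2)$ and the cup $(2,3)$ the hypothesis only says $\ker B_1=\ker A_2$ for two maps $\CC^2\to\CC$, and ruling out the $2$-dimensional kernel requires invoking the stability condition at $V_1$; in general one must rule out extra kernel coming from the $D''$-blocks of $\tA_{m\to\sigma(i)}$, i.e.\ show that the hook compositions $A_{t+1\to\sigma(i)}\Gamma_{\to t+1}$ do not vanish (modulo $\Im A_{m\to\sigma(i)}$). This is exactly what the paper's proof spends its length on: for each hook size $h$ it argues by contradiction, using stability at $V_1,\dots,V_c$ and repeated admissibility to manufacture a nonzero vector $v_b\in\ker B_{b\to a}\setminus\ker A_{b\to c}$. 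Your proposed substitute --- passing through $\tB_{m\to i-1}\tGa_{1\to m}=\tGa_{1\to i-1}x^{\delta(i)}$ and ``the shape of $F_{i-1}$ forced by the cup relations'' --- is not carried out, and as stated it risks circularity: $\dim\ker\tB_{m\to i-1}=\delta(i)$ is equivalent to $\dim x^{-\delta(i)}F_{i-1}=\sigma(i)$, which is the dimensional part of the very cup relation that Proposition~\ref{prop:a1} identifies with the tilde kernel condition you are trying to establish; and $F_{i-1}$ is not controlled by cup relations for cups lying wholly to the left of $i$ alone (enclosing cups and rays also constrain it, and the ray relations are themselves derived from the cup relations). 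An induction making this work would need a careful ordering of the cups and is precisely the missing argument, so the second inclusion remains unproved.
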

\proof
Recall $\mu^+$ from \eqref{def:+}. We now define $\mu^{+(1)}:= \mu^+$ and $\mu^{+(i)} =(\mu^{+(i-1)})^+$ for all $i$.

Let $\Phi\inv(\tA, \tB, \tGa, \tDe) = (A,B, \Gamma, \Delta) \in \Lambda^a$.
From Proposition~\ref{prop:Spr} we see that $\Delta$ must be zero.
The proposition follows as long as we show that
$\ker \tB_{i+\delta(i)-1 \to i-1} 
= \ker \tA_{\sigma(i)-\delta(i) \to \sigma(i)}$ 
is equivalent to $\ker B_{i+\delta(i)-1 \to i-1} 
= \ker A_{\sigma(i)-\delta(i) \to \sigma(i)}$. 
For simplicity, let us use the shorthand $a = i-1 < b = i+\delta(i)-1 = \sigma(i) - \delta(i) < c = \sigma(i)$.

Using Lemma~\ref{lem:hell}, we obtain that,
for $a<b<c$, 
\eq
\tB_{b\to a}=
\small 
\begin{blockarray}{ *{8}{c} }
& V_{b} &   D'_{b}\\
\begin{block}{ c @{\quad} ( @{\,} *{7}{c} @{\,} ) }
V_a & {B}_{b\to a} & 0 \\
D'_{b}& 0 &   I   \\
D'_{\ld_a /\ld_b} & 0 &0\\
\end{block}
\end{blockarray}
\normalsize 
~~~~~,
\endeq
\eq
\tA_{b\to c}
=
\begin{blockarray}{ *{8}{c} }
& V_b & D''_b & \dots & D''_t & \dots & D''_{c-1} & D'_{\ld[c]^{+(c-b)}} \\
\begin{block}{ c @{\quad} ( @{\,} *{7}{c} @{\,} )}
V_{c}& {A}_{b\to c}&
A_{b+1\to c}\Gamma_{\to b+1}
&
\dots
&
A_{t+1\to c}\Gamma_{\to t+1}
&
\dots
&\Gamma_{\to c} & 0\\
D'_{c} & 0 &0 &\dots & 0 &
\dots & 0 &I   \\
\end{block}
\end{blockarray}
\normalsize
~~~~~,
\endeq
where $D''_t$ is the space (depending on fixed $b<c$) described below:
\eq\label{def:D''}
D''_t 
=
D'_{\ld[t]^{+(t-b)}/\ld[t+1]^{+(t-b+1)}}
=
\begin{cases}
\<e_{t-b+1}, f_{t-b+1}\> &\tif t \leq k, 
\\
\hspace{6mm} \<e_{t-b+1}\> &\tif k+1 \leq t \leq n-k, 
\\
\hspace{10mm} \{0\} &\textup{otherwise}.
\end{cases}
\endeq

Since $\tB_{b\to a}$ acts as an identity map on $D'_b$, its kernel must lie in $V_b$.
Moreover, $\ker \tB_{b\to a} = \ker B_{b\to a}$. 

Assume that $\ker \tB_{b \to a} 
= \ker \tA_{b \to c}$.
It follows that $\ker \tA_{b \to c} \subset V_b$.
In other words,
$D'_{\ld[b]/\ld[c-1]^{+(c-b)}}$ must not lie in the kernel, and hence $\ker A_{b\to c} =\ker \tA_{b\to c} = \ker \tB_{b \to a} = \ker B_{b\to a}$.

On the other hand, assuming $\ker B_{b \to a} 
= \ker A_{b \to c}$, we need to show that
$
D'_{\ld[b]/\ld[c-1]^{+(c-b)}}\not\in \ker
\tA_{b \to c}.
$
In other words, for $b \leq t \leq c-1$,
any composition of maps of the following form must be nonzero: 
\eq\label{eq:a23}
\begin{array}{cc}
\xymatrix@C=25pt@R=25pt{
&&
{D_{k}}  
	\ar@/^/[d]^{\Gamma_{k}}
\\
{V_{t+1}}
	\ar@/_/[r]_{A_{t+1 \to c}}  
&
{V_{c}}
	\ar@/_/[l]_{B_{c\to t+1}}  
&
	\ar@/_/[l]_{B_{k \to c}}	
{V_{k}}  
}
&
\xymatrix@C=25pt@R=25pt{
&&
{D_{n-k}}  
	\ar@/^/[d]^{\Gamma_{n-k}}
\\
{V_{t+1}}
	\ar@/_/[r]_{A_{t+1 \to c}}    
&
{V_{c}}
	\ar@/_/[l]_{B_{c\to t+1}}    
&
	\ar@/_/[l]_{B_{n-k \to c}}		
{V_{n-k}}  
}
\\
\\
\tif t+1 \leq k, 
&
\tif t+1 \neq n-k. 
\end{array}
\endeq

Note that the spaces $D''_t$ are nonzero only when $t \leq n-k$, and hence the maps $A_{t+1 \to c}\Gamma_{\to t+1}$ in \eqref{eq:a23} only exist when $c \leq n-k$.
Therefore, the proposition is proved for $c > n-k$, and we may assume now $c \leq n-k$.

We first prove that the composition of maps of the form in \eqref{eq:a23} must be nonzero for the base case $t=c-1$ by contradiction. 
Our strategy is to construct nonzero vectors $v_i \in V_i \cap \Im A_{1\to i}$ for $1\leq i \leq c$.
If this claim holds, then by admissibility conditions from $V_1$ to $V_i$, we have
\eq
B_{i-1}(v_i) = 
B_{i-1}A_{1 \to i}(v_{1}) =
A_{1\to i-1}B_1A_1(v_1) = 0,
\endeq
and hence there is a nonzero vector $v_b \in \ker B_{b\to a}$.
On the other hand, $v_b \not\in \ker A_{b\to c}$ since $A_{b\to c}(v_b) = v_c \neq 0$, a  contradiction.
Now we prove the claim. Suppose that $\Gamma_{n-k \to c} = 0$, then
\eq
\Gamma_{n-k \to i} = B_{c\to i}\Gamma_{n-k \to c}= 0, \quad \tforall i < c.
\endeq
By the stability condition on $V_1$, we have 
\eq
V_1 = \Im \Gamma_{k\to1} + \Im \Gamma_{n-k\to 1} = \Gamma_{k\to1}(f),
\endeq
 and hence the vector $\phi_i = \Gamma_{k\to i}(f) \in V_i$ are all nonzero for $i \leq k$.
Define $v_i= A_{1\to i}(\phi_1)$ for all $i$.
The stability condition on $V_2$ now reads
\eq
V_2 = \Im A_1 + \Im \Gamma_{k\to2} + \Im \Gamma_{n-k\to 2} = \<A_1(\phi_1)\> + \<\phi_2\>.
\endeq
Since $V_2$ is 2-dimensional, the vector $v_2 = A_1(\phi_1)$ must be nonzero. 
An easy induction shows that, for $2\leq i\leq k$, the vector $v_i$ is nonzero.
For $k +1\leq i \leq c$, the stability condition on $V_i$ is then
\eq
V_i = \Im A_{i-1} + \Im \Gamma_{n-k \to i} = A_{i-1}(V_{i-1}).
\endeq
Since both $\dim V_{i} = \dim V_{i-1} = k$, the map $A_{i-1}$ is of full rank, and hence $v_i \neq 0$ for $k+1\leq i \leq c$.
Therefore, we have seen that the assumption that $\Gamma_{n-k \to c} = 0$ leads to a contradiction, and hence $\Gamma_{n-k \to c} \neq 0$.
A similar argument shows that $\Gamma_{k \to c}\neq 0$.
The base case is proved.

Next, we are to show that the composition of maps of the form in \eqref{eq:a23} must be nonzero for $b\leq t < c-1$.
We write for short $h = c - t - 1$ to denote the size of the ``hook'' in the map $A_{t+1 \to c}\Gamma_{n-k \to t+1}$.
For example, as shown in the figure below, the maps $\Gamma_{\to c}$ have hook size 0, the maps $A_{c-1}\Gamma_{\to c-1}$ have hook size 1, and so on:
\[ 
\begin{array}{ccc}
\xymatrix@C=25pt@R=25pt{
&
{D_{j}}  
	\ar@/_/[d]_{\Gamma_{j}}
\\
{V_{c}}
&
	\ar@/_/[l]	
{V_{j}}  
}
&
\xymatrix@C=25pt@R=25pt{
&&
{D_{j}}  
	\ar@/_/[d]_{\Gamma_{j}}
\\
{V_{c-1}}
	\ar@/_/[r]_{A_{c-1}}
&
{V_{c}}
&
	\ar@/_/[ll]_{B_{j\to c-1}}
{V_{j}}  
}
&
\xymatrix@C=25pt@R=25pt{
&&
{D_{j}}  
	\ar@/_/[d]_{\Gamma_{j}}
\\
{V_{c-2}}
	\ar@/_/[r]_{A_{c-1}A_{c-2}}  
&
{V_{c}}
&
	\ar@/_/[ll]_{B_{j\to c-2}}
{V_{j}.}  
}
\\
\\
h=0
&
h=1
&
h=2
\end{array}
\] 
Note that $h$ is strictly less than the size $c-b$ of the cup.
Our strategy is to construct nonzero vectors $v_{i} \in V_{i} \cap \Im A_{h+1\to i}$ for $h+1\leq i \leq c$.
If this claim holds, then by admissibility conditions from $V_{h+1}$ to $V_i$ and an induction on $i$, we have
\eq
B_{i \to i-h-1}(v_i) = 
B_{i \to i-h-1} A_{i-1}( v_{i-1}) =  
A_{i-2} B_{i-1 \to i-h-2}( v_{i-1}) = 0.
\endeq
Note that the initial case holds since
$B_{h+1 \to 0}(v_{h+1}) = 0$.
Hence, there is a nonzero vector $v_b \in \ker B_{b\to b-h-1} \subset \ker B_{b\to a}$.
On the other hand, $v_b \not\in \ker A_{b\to c}$ since $A_{b\to c}(v_b) = v_c \neq 0$, a  contradiction.
We can now prove the claim.
Suppose first that $A_{t+1 \to c}\Gamma_{n-k \to t+1} = 0$.
By the admissibility conditions from $V_{t+2}$ to $V_{n-k+h-1}$, we have
\eq\label{eq:admt}
0 = A_{t+1 \to c}\Gamma_{n-k \to t+1} = 
B_{n-k+h\to c}A_{n-k \to n-k+h}\Gamma_{n-k},
\endeq
which can be visualized from the figure below by equating the two maps $D_{n-k}\to V_c$ represented by composing solid arrows and dashed arrows, respectively:
\[ 
\xymatrix@C=25pt@R=25pt{
&&
{D_{n-k}}  
	\ar@/^/@{.>}[d]^{\Gamma_{n-k}}
	\ar@/_/[d]_{\Gamma_{n-k}}
\\
{V_{t+1}}
	\ar@/_/[r]  
&
{V_{c}}
	\ar@/_/[l]  
&
	\ar@/^/@{.>}[l]
	\ar@/_/[l]	
{V_{n-k}}  
	\ar@/^/@{.>}[r]
	& 
	\ar@/^/@{.>}[l]
	{V_{n-k+h}.} 
}
\] 
For all $1 \leq i \leq t+1$, we will show now any map $D_{n-k} \to V_i$ with exactly a size $h$ ``hook'' is zero.
Precisely speaking, the admissibility conditions from $V_{i+1}$ to $V_{n-k+h-1}$ imply that
\eq
A_{i \to i+h}\Gamma_{n-k\to i}
=
B_{n-k+h \to i+h}A_{n-k \to n-k+h}\Gamma_{n-k},
\endeq 
which can be visualized as the figure below:
\[ 
\xymatrix@C=25pt@R=25pt{
&&
{D_{n-k}}  
	\ar@/^/@{.>}[d]^{\Gamma_{n-k}}
	\ar@/_/[d]_{\Gamma_{n-k}}
\\
V_i
	\ar@/_/[r]  
&
{V_{i+h}}
	\ar@/_/[l]  
&
	\ar@/^/@{.>}[l]
	\ar@/_/[l]	
{V_{n-k}}  
	\ar@/^/@{.>}[r]
& 
	\ar@/^/@{.>}[l]
	{V_{n-k+h}.}
}
\] 
It follows that $A_{i \to i+h}\Gamma_{n-k\to i} = 0$ since it is a composition of a zero map in \eqref{eq:admt}.
Since the hook size $h$ is less than the cup size $c-b$, which is less or equal to the total number $k$ of cups, we have
$h < k$ and so 
\eq
\dim V_h = h, 
\quad
\dim V_{h+1} = h+1.
\endeq
We claim that 
\eq\label{eq:claim1_t} 
\Im \Gamma_{k \to h+1} \neq 0.
\endeq
By the stability condition on $V_1$, we have 
\eq
V_1 = \Im \Gamma_{k\to 1} + \Im \Gamma_{n-k \to 1}.
\endeq
For the dimension reason, either $\Gamma_{k\to 1}$ or $\Gamma_{n-k\to 1}$ is nonzero. 
If $\Gamma_{k\to 1} \neq 0$ then $\Gamma_{k\to h+1} \neq 0$, and the claim follows.
If $\Gamma_{n-k\to 1} \neq 0$,
we define, for $1\leq i \leq l\leq n-k$, 
\eq
\epsilon_i = \epsilon^{(i)}_i = \Gamma_{n-k \to i}(e) \neq 0, 
\quad
\epsilon^{(i)}_l = A_{i\to l}(\epsilon_i).
\endeq
An easy induction shows that if $\epsilon^{(i)}_l = 0$ for some $1\leq i \leq l \leq h$ then $\Gamma_{k\to l} \neq 0$, which proves the claim. 
So we we now assume that $\epsilon^{(i)}_l \neq 0$ for all $1\leq i \leq l \leq h$.
Note that
\eq
\epsilon^{(1)}_{1+h}
 =
A_{1\to 1+h} \Gamma_{n-k \to 1}(e) = 0,
\endeq
and hence $\rk A_h = h-1$.
Now the  stability condition on $V_{h+1}$ implies that 
\eq
\dim V_{h+1} = \rk A_h + \rk \Gamma_{k\to h+1} + \rk \Gamma_{n-k\to h+1}, 
\endeq
and hence $\rk \Gamma_{k\to h+1} =1$.
The claim \eqref{eq:claim1_t} is proved.
Moreover, the vectors $\phi_i = \Gamma_{k\to i}(f) \in V_i$ are all nonzero for $h+1 \leq i \leq k$.
Define $v_i = A_{h+1 \to i}(\phi_{h+1})$ for all $i>h$.

The stability condition on $V_{h+2}$ now reads
\eq\label{eq:dimt}
V_{h+2} = \begin{cases}
\Im A_{h+1} + \Im \Gamma_{k\to h+2} + \Im \Gamma_{n-k\to h+2} 
&\tif h+2 \leq k, 
\\
\qquad \Im A_{h+1} + \Im \Gamma_{n-k\to h+2} 
&\tif h+2 > k.
\end{cases}
\endeq
In either case, a dimension argument similar to the one given in the base case $t=c-1$ shows that the vector $v_{h+2} = A_{h+1}(\phi_{h+1})$ must be nonzero since
\eq
\epsilon^{(i)}_{l}
 =
A_{i\to l} \Gamma_{n-k \to i}(e) = 0 
\quad
\textup{for}
\quad
l \geq i+h.
\endeq 
For $h+2\leq i\leq c$, a dimension argument using \eqref{eq:dimt} shows that
$v_i \neq 0$,
which leads to a contradiction, and hence $A_{t+1 \to c}\Gamma_{n-k \to t+1} \neq 0$.
A similar argument shows that $A_{t+1 \to c}\Gamma_{k \to t+1} \neq 0$.
The proposition is proved.
\endproof
\thm[Explicit quiver realization of irreducible components of Springer fibers]\label{thm:main1}
Recall $\Lambda^a$ from \eqref{defi:Springer_component_in_quiver}.
For any cup diagram $a \in I_{\ld}$, write $M_a = p_{\ld}(\Lambda^a)$. Then
\eq
M_a = \tphi_\ld\inv(K_a)
\endeq
is the irreducible component on the quiver side for the Springer fiber.
As a consequence, $\tphi_\ld\inv(\cB_{\ld}) = \bigcup_{a \in I_{\ld}} M_a$ is the decomposition into irreducible components.
\endthm
\proof
We have
\eq
\begin{aligned}
\tphi_\ld\inv(K_a)&={p}_{\bar{1}}(\fT^a)\quad &&\textup{by Proposition}~\ref{prop:a1}
\\
&=p_{\ld}(\Phi\inv(\fT^a))\quad &&\textup{by Proposition}~\ref{prop:Phi}(b)(c)
\\
&=p_{\ld}(\Lambda^a) \quad  &&\textup{by Proposition}~\ref{prop:a2}.
\end{aligned}
\endeq
\endproof
\subsection{The ray condition via quiver representations}
For completeness, in this section we characterize the ray condition $F_i = x^{-\frac{1}{2}(i-\rho(i))}(x^{n-k-\rho(i)} F_n ),$ on the quiver representation side.
\prop\label{prop:ray}
If $[A,B,\Gamma, 0]$ is sent to the irreducible component $K_a \subset M_{\ld}$ for a cup diagram $a \in B_\ld$.
Then the ray condition for vertex $i$ is equivalent to
\eq
\begin{cases}
\:\: B_iA_i = 0
&\tif c(i) \geq 1, 
\\
\Gamma_{n-k\to i} = 0
&\tif c(i) = 0,
\end{cases}
\endeq
where
$c(i) = \frac{i-\rho(i)}{2}$ is the total number of cups to the left of $i$.
\endprop
\proof
Write $\rho = \rho(i)$ and $c = c(i)$ for short.
By Theorem~\ref{thm:Fi}, the ray relation is equivalent to that
\eq\label{eq:rayblock}
\ker(A_{n-k-\rho+1 \to n} \Gamma_{\to n-k-\rho+1}~\dots~ A_{n-k \to n}\Gamma_{n-k})
=
\ker(A_{c+1 \to i}\Gamma_{\to c+1}~\dots~\Gamma_{\to i}),
\endeq
where the domains and codomains are dropped for simplicity.
Note that there are $\rho$ blocks on the left hand side of \eqref{eq:rayblock} and each block is a zero map; while there are $i -c = \rho + c \geq \rho$ blocks on the right hand side. 
Hence, the defining relations, by an elementary case-by-case analysis, are
\eq
\begin{cases}
A_{c+\rho \to i} \Gamma_{n-k\to c+\rho} = 0,
\quad
A_{c+\rho+1 \to i} \Gamma_{n-k\to c+\rho+1} \neq 0 
&\tif c \geq 1, 
\\
\qquad \qquad \qquad 
A_{c+\rho \to i} \Gamma_{n-k\to c+\rho} = 0 
&\tif c = 0.
\end{cases}
\endeq
Note that by definition, $i = \rho$ when $c = 0$.
We are done.
\endproof

\subsection{Irreducible components of Springer fibers of classical type}
\label{sec:ICSC}
For two-row Springer fibers of type C/D, it is known that they afford a realization in terms of fixed point sbvarieties of a two-row quiver varietiy $M_{(n-k,k)}$ under a  certain automorphism $\Theta$.
An explicit automorphism is introduced by Henderson--Licate in \cite{HL14} for the type A quivers.
A less explicit automorphism due to Li can be found in \cite{Li19}, which applies to all quivers associated with the symmetric pairs (or Satake diagrams).
It is unclear to us whether the two automorphisms match. 
However, the Maffei-Nakajima isomorphism $\tphi$ restricts to an isomorphism between the fixed-point subvariety $M_\ld^\Theta$ and a Slodowy variety of type C/D in either case (See \cite[(5.2), Thm.~5.3]{HL14}, \cite[Theorem~A]{Li19}). 

With our explicit quiver realization of irreducible components $M_a \subset M_\ld$, it is made possible to characterize irreducible components of type D Springer fibers via the fixed point subvarieties $M^\Theta_a \subset M^\Theta_\ld$.
An analogue of \eqref{defi:Springer_component_in_quiver}  is discovered in an unpublished manuscript \cite[\S5--7]{ILW19} by the authors.
The relations can then be translated via $\tphi_\ld$ as a type D generalization of the Stroppel--Webster result (Proposition~\ref{prop:known_results_about_irred_comp}) on the flag side.
We have done computations first on the quiver side to establish such a characterization of type D irreducible components.
We then realize that its counterpart on the flag side can be proved more efficiently, as long as we know these conditions which are derived by studying quiver representations that are fixed under $\Theta$. 
Therefore, we will provide details to the characterization of irreducible components of two-row Springer fibers of classical type in a sequel paper \cite{ILW20b} without referring to quiver varieties.

\bibliography{litlist-geom} \label{references}

\providecommand{\bysame}{\leavevmode\hbox to3em{\hrulefill}\thinspace}
\providecommand{\MR}{\relax\ifhmode\unskip\space\fi MR }
\providecommand{\MRhref}[2]{%
  \href{http://www.ams.org/mathscinet-getitem?mr=#1}{#2}
}
\providecommand{\href}[2]{#2}
\begin{thebibliography}{ILW20}

\bibitem[Fun03]{Fun03}
F.Y.C. Fung, \emph{On the topology of components of some {S}pringer fibers and
  their relation to {K}azhdan-{L}usztig theory}, Adv. Math. \textbf{178}
  (2003), no.~2, 244--276.

\bibitem[HL14]{HL14}
A.~Henderson and A.~Licata, \emph{Diagram automorphisms of quiver varieties},
  Adv. Math. \textbf{267} (2014), 225--276.

\bibitem[ILW19]{ILW19}
M.S. Im, C.~Lai, and A.~Wilbert, \emph{Irreducible components of two-row
  {S}pringer fibers and {N}akajima quiver varieties}, unpublished manuscript
  (2019), arXiv:1910.07411.

\bibitem[ILW20]{ILW20b}
\bysame, \emph{Irreducible components of two-row springer fibers for all
  classical types}, arXiv e-prints (2020), arXiv:2011.13138.

\bibitem[KS19]{KS19}
D.~Kus and B.~Schumann, \emph{{Nakajima quiver varieties, affine crystals and
  combinatorics of Auslander-Reiten quivers}}, arXiv e-prints (2019),
  arXiv:1910.07411.

\bibitem[Li19]{Li19}
Y.~Li, \emph{Quiver varieties and symmetric pairs}, Represent. Theory
  \textbf{23} (2019), 1--56.

\bibitem[Lus91]{Lu91}
G.~Lusztig, \emph{Quivers, perverse sheaves, and quantized enveloping
  algebras}, J. Amer. Math. Soc. \textbf{4} (1991), no.~2, 365--421.

\bibitem[Lus98]{Lu98}
\bysame, \emph{On quiver varieties}, Adv. Math. \textbf{136} (1998), no.~1,
  141--182.

\bibitem[Maf05]{Maf05}
A.~Maffei, \emph{Quiver varieties of type {A}}, Comment. Math. Helv.
  \textbf{80} (2005), no.~1, 1--27.

\bibitem[Nak94]{Na94}
H.~Nakajima, \emph{Instantons on {ALE} spaces, quiver varieties, and
  {K}ac-{M}oody algebras}, Duke Math. J. \textbf{76} (1994), no.~2, 365--416.

\bibitem[Nak98]{Nak98}
\bysame, \emph{Quiver varieties and {K}ac-{M}oody algebras}, Duke Math. J.
  \textbf{91} (1998), no.~3, 515--560.

\bibitem[Sai02]{Sai02}
Y.~Saito, \emph{Crystal bases and quiver varieties}, Math. Ann. \textbf{324}
  (2002), no.~4, 675--688.

\bibitem[Spa76]{Spa76}
N.~Spaltenstein, \emph{The fixed point set of a unipotent transformation on the
  flag manifold}, Nederl. Akad. Wetensch. Proc. Ser. A, vol.~79, 1976,
  pp.~452--456.

\bibitem[Spr76]{Spr76}
T.A. Springer, \emph{Trigonometric sums, {G}reen functions of finite groups and
  representations of {W}eyl groups}, Invent. Math. \textbf{36} (1976),
  173–--207.

\bibitem[Spr78]{Spr78}
\bysame, \emph{A construction of representations of {W}eyl groups}, Invent.
  Math. \textbf{44} (1978), 279--293.

\bibitem[SW12]{SW12}
C.~Stroppel and B.~Webster, \emph{2-block {S}pringer fibers: convolution
  algebras and coherent sheaves}, Comment. Math. Helv. \textbf{87} (2012),
  477--520.

\bibitem[Var79]{Var79}
J.A. Vargas, \emph{Fixed points under the action of unipotent elements of
  {SL}$_n$ in the flag variety}, Bol. Soc. Mat. Mexicana \textbf{24} (1979),
  no.~1, 1--14.

\end{thebibliography}
\bibliographystyle{amsalpha}

\end{document}